\documentclass{amsart}
\usepackage{amsthm,amssymb,mathtools}
\usepackage{stmaryrd,tikz,pgfplots}
\usepackage{marginnote}

\usepackage[textsize=tiny]{todonotes}

\newcommand{\Marginpar}[1]{\marginpar{\tiny{#1}}}
\newcommand{\Note}[1]{{\par\noindent\hrulefill\par\tiny{#1}\par\noindent\hrulefill\par}}
\newcommand{\Detail}[1]{{#1}}
\renewcommand{\Marginpar}[1]{}
\renewcommand{\Note}[1]{}
\renewcommand{\Detail}[1]{}

\usepackage{hyperref}
\usepackage[all]{xy}
\usepackage{amsmath,amsfonts}	
\usepackage{amsthm,latexsym,amssymb}
\usepackage{tensor}

\newtheorem{thm}{Theorem}[section]
\newtheorem{prop}[thm]{Proposition}
\newtheorem{lem}[thm]{Lemma}
\newtheorem{cor}[thm]{Corollary}

\newtheorem{CYproblem}[thm]{Chern--Yamabe Problem}
\newtheorem{EXproblem}[thm]{Existence Problem}

\theoremstyle{definition}
\newtheorem{defn}[thm]{Definition}

\newtheorem{rem}[thm]{Remark}

\renewcommand{\[}{\begin{equation*}}
\renewcommand{\]}{\end{equation*}}

\DeclareMathOperator{\grad}{grad}

\def\C{\mathbb{C}}

\def\LieHam{{\mathfrak{ham}^f(\omega)}}

\DeclareMathOperator\Lie{Lie}
\DeclareMathOperator\Ham{Ham}
\DeclareMathOperator\tr{tr}

\DeclareMathOperator\End{End}

\DeclareMathOperator\vol{vol}

\begin{document}
\parskip1mm

\title[Conformally Almost Hermitian Geometry]{Integrability Theorems and Conformally Constant Chern Scalar curvature Metrics in Almost Hermitian Geometry}

\author{Mehdi Lejmi}
\address{Department of Mathematics, Bronx Community College of CUNY, Bronx, NY 10453, USA.}
\email{mehdi.lejmi@bcc.cuny.edu}
\author{Markus Upmeier}
\address{Universit\"atsstrasse 14, 86159 Augsburg, Germany.}
\email{Markus.Upmeier@math.uni-augsburg.de}

\begin{abstract}
The various scalar curvatures on an almost Hermitian manifold are studied, in particular with respect to conformal variations. We show several integrability theorems, which state that two of these can only agree in the K\"ahler case. Our main question is the existence of almost K\"ahler metrics with conformally constant Chern scalar curvature. This problem is completely solved for ruled manifolds and in a complementary case where methods from the Chern--Yamabe problem are adapted to the non-integrable case. Also a moment map interpretation of the problem is given, leading to a Futaki invariant and the usual picture from geometric invariant theory.
\end{abstract}
\maketitle

\setcounter{tocdepth}{1}
\tableofcontents


The present paper is devoted to the conformal geometry of almost Hermitian structures, in particular to aspects relating to their scalar curvature.

The necessary background is briefly reviewed in \S\ref{sec:Preliminaries}. In particular, we recall the Chern connection and its torsion (see~\cite{MR1456265}) on almost Hermitian manifolds, which reflects also the almost complex structure. From it one derives three Ricci forms and two scalar curvatures: the Hermitian (or Chern) scalar curvature $s^H=2s^C$ and the third scalar curvature $s$. From the Levi-Civita connection we also have the Riemannian scalar curvature $s^g$ and all three evidently coincide in the K\"ahler case. In \S\ref{sec:ComparisionScalar} their precise relationship in general is established by careful calculation in local coordinates (see Propositions~\ref{prop:ScalarComparision}, \ref{prop:ScalarComparision1}, \ref{sHsG}). The formulas generalize those of Gauduchon in the integrable case \cite{MR742896}. 

These are applied in \S\ref{sec:VanishingTheorems} to prove several new integrability theorems, which assert that when two scalar curvatures coincide we must already be in the K\"ahler case. These holds in any dimension when we have a nearly K\"ahler structure (Corollary~\ref{nearlyKaehlerVanish}). We also have results in any dimension in the almost K\"ahler case (see Corollary~\ref{cor:VanishAK} and also Apostolov--Dr\u{a}ghici~\cite{apostolov1999almost}). The completely general almost Hermitian case is restricted to dimension $4$ (Theorem~\ref{THM:vanishing}). We also obtain an interesting result (Corollary~\ref{curious-cor}) on $6$\nobreakdash-dimensional compact non-K\"ahler, nearly K\"ahler manifolds: they all have $s^H=0$.

We then compute in \S\ref{sec:ConformalChanges} the behaviour of our Ricci forms and scalar curvatures under conformal variations (see Corollaries~\ref{RicciVar} and~\ref{cor:ConformalScalVar}). This allows us to prove another integrability theorem (Theorem~\ref{ineq_chern_riem_con_AK}) for conformally almost K\"ahler structures, relating the Hermitian and Riemannian scalar curvature. 

In \S\ref{sec:conformally} we state the basic problem that will concern us for the rest of the paper: an almost Hermitian structure is \emph{conformally constant} if some conformal variation has constant Hermitian scalar curvature. We first extend the results of  Angella--Calamai--Spotti \cite{Ang-Cal-Spo} on the Chern--Yamabe problem to the non-integrable case, and show some independent results of interest. In Corollary~\ref{corExtend} we obtain that every almost Hermitian structure with non-positive fundamental constant \eqref{GauduchonDegree} is conformally constant. The remaining case is much more difficult. It is not even known in general whether {\it one can find any conformally constant almost Hermitian structure}. This is our Existence Problem~\ref{EXproblem}, where we restrict to the symplectic case.

In \S\ref{sec:BabyCase} we solve this problem for ruled manifolds given by the generalized Calabi construction (Theorem~\ref{thm:babycase}). Drawing on the fundamental work by Apostolov--Calderbank--Gauduchon--T{\o}nnesen-Friedman \cite{MR2144249,MR2807093}, the proof is carried out in \S\ref{ssec:ExistenceProblem} and quickly reduces to an ODE for a metric on the moment polytope, an interval in our case. The main difficulty is to show positivity of the solution and this is done by a careful asymptotic analysis in \S\ref{ssec:analyticpart}. The manifolds thus constructed are new examples of non-K\"ahler structures of constant Hermitian scalar curvature with positive fundamental constant

Finally in \S\ref{sec:MomentMap} we give an interpretation of our existence problem in the framework of moment maps (Theorem~\ref{thm:momentmap}). Here we assume a symmetry on the manifold, namely a Hamiltonian vector field which is Killing for some metric.  

This leads to the usual existence and uniqueness conjectures in terms of geometric invariant theory, 
and also to a Futaki invariant (see~\S\ref{ssec:Futaki}). We end in \S\ref{ssec:toriccase} with concrete calculations in the toric case.

%

\subsection*{Acknowledgements}
Both authors are grateful to Tedi Dr\u{a}ghici and Daniele Angella for their suggestions and interest in this work.

\section{Preliminaries}\label{sec:Preliminaries}

Let $(M,J,g)$ be an almost Hermitian manifold of real dimension $m=2n$. Thus ${J\colon TM\to TM}$ is an almost complex structure $J^2=-1$ that is orthogonal for the Riemannian metric $g$. The associated fundamental form is $F=g(J\cdot,\cdot)$. We usually do not distinguish between the metric and the almost complex structure and write $g_J\coloneqq F(\cdot,J\cdot)$ for the metric corresponding to $J$. The volume form is $\vol_g=\frac{F^n}{n!}$. On the complexification $TM\otimes \mathbb{C} = T^{1,0} \oplus T^{0,1}$ we consider the $\mathbb{C}$-bilinear extension of $g$, the Hermitian form $h(X\otimes z, Y\otimes w) = z\overline{w} g(X,Y)$, and the restriction of $h$ to $TM$, which we identify with $T^{1,0}$.

\subsection{Complex Coordinates}\label{ComplexCoordinates}
Let $z_\alpha$ denote a complex basis of $T^{1,0}$. Then $\bar{z}_{\bar\alpha}$ is the basis of $T^{0,1}$ obtained by conjugation. The dual basis is denoted $z^\alpha, \bar{z}^{\bar\alpha}$. The components of the Hermitian form are\Marginpar{Bad, but standard notation. $h_{\alpha\bar\beta}=g_\mathbb{C}(z_\alpha,\bar{z}_{\bar\beta})$ should be called $g_{\alpha\bar\beta}$. The bar over $\beta$ has no meaning, except to signify that $h$ is anti-linear in the second component. It is introduced to match the summation convention.}
\[
	h_{\alpha\bar\beta} = h(z_\alpha, z_\beta).
\]
The transposed inverse of $h_{\alpha\bar\beta}$ is denoted $h^{\alpha\bar\beta}$. Thus $h_{\alpha\bar\gamma}h^{\beta\bar\gamma}=\delta_\alpha^\beta$ and $h_{\alpha\bar\beta} = \overline{h_{\beta\bar\alpha}}$. The fundamental form is then $F=ih_{\alpha\bar\beta} z^\alpha\wedge \bar{z}^{\bar\beta}$. We shall use the Hermitian form to raise and lower tensor indices.

We also get a $J$-adapted orthonormal frame ${e_1, e_2=Je_1, \ldots, e_{m-1}, e_m=Je_{m-1}}$ of the real tangent bundle $TM$ by decomposing $z_\alpha$ into the real and imaginary part:
\begin{equation}\label{Jadapted}
	z_\alpha = \frac12(e_{2\alpha-1} - ie_{2\alpha})
\end{equation}
By convention $\alpha, \beta, \gamma, \ldots$ range over $1,\ldots, n$ while $i,j,k,\ldots$ range over $1,\ldots, m$.

The twisted exterior differential of a $p$-form $\psi$ is defined as (see~\cite[(1.11.1)]{gauduchon2010calabi})
\[
	d^c\psi = J\psi J^{-1}\psi,
\]
where $J$ acts on forms by $(J^{-1})^*$ (some authors have a different sign convention).

\subsection{Type Decomposition}

Let $E$ be a vector bundle on $M$ with complex structure $J^E$. Unless $E=\C$ the space $\Omega^p(M;E)$ of $E$-valued differential $p$-forms has two different type decompositions.

\begin{defn}
A form $\psi \in \Omega^p(M;E)$ has \emph{$E$-type $(r,s)$} when $p=r+s$ and
\begin{align}\label{Type1}
	\sum_{k=1}^p \psi_{X_1 \ldots JX_k\ldots X_p} = (r-s) J^E\left(\psi_{X_1\ldots X_p} \right)
	\qquad \forall X_i \in TM.
\end{align}
The subspace of forms of $E$-type $(r,s)$ is denoted by $\Omega^{r,s}(M;E)$. We write $\psi^{r,s}$ for the projection with respect to this direct sum decomposition of $\Omega^p(M;E)$.
\end{defn}
Hence $\psi$ behaves like an ordinary $(r,s)$-form, except that it is vector-valued.
For example, the Nijenhuis tensor $N \in \Omega^2(M;TM)$ has $TM$-type $(0,2)$.

\begin{lem}
For a connection with $\nabla_X J=0$, $\nabla_X\psi$ has the same $E$-type as $\psi$.
\end{lem}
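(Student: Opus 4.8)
The plan is to reformulate the $E$-type condition as the statement that $\psi$ lies in a \emph{parallel} subbundle, so that covariant differentiation automatically preserves it. Introduce two bundle endomorphisms of $\Omega^p(M;E)$: the operator $C$ applying $J^E$ to the values,
\[
(C\psi)_{X_1\ldots X_p} = J^E\bigl(\psi_{X_1\ldots X_p}\bigr),
\]
and the operator $B$ summing the insertions of $J$ into each slot,
\[
(B\psi)_{X_1\ldots X_p} = \sum_{k=1}^p \psi_{X_1\ldots JX_k\ldots X_p}.
\]
With this notation, condition \eqref{Type1} says precisely that $\psi$ has $E$-type $(r,s)$ if and only if $B\psi=(r-s)\,C\psi$. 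I would then show that the induced connection $\nabla$ on $\Omega^p(M;E)$ commutes with both $B$ and $C$; applying $\nabla_X$ to the identity $B\psi=(r-s)C\psi$ immediately yields $B(\nabla_X\psi)=(r-s)C(\nabla_X\psi)$, which is the type condition for $\nabla_X\psi$.

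The commutation $\nabla_X C=C\nabla_X$ is immediate from the Leibniz rule together with the hypothesis that $\nabla$ preserves the complex structure on $E$, since then $\nabla^E_X(J^E\eta)=J^E(\nabla^E_X\eta)$ for every section $\eta$. The substance of the argument is the commutation $\nabla_X B=B\nabla_X$, which I would verify by expanding both sides through the tensorial covariant-derivative formula
\[
(\nabla_X\psi)(Y_1,\ldots,Y_p)=\nabla^E_X\bigl(\psi(Y_1,\ldots,Y_p)\bigr)-\sum_{j=1}^p \psi(Y_1,\ldots,\nabla_X Y_j,\ldots,Y_p).
\]
The first, purely $E$-valued, term matches trivially after summing over the inserted slots. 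The correction terms split according to whether the differentiated slot $j$ and the $J$-inserted slot $k$ coincide: the off-diagonal contributions $j\neq k$ agree between the two sides after relabelling, while the diagonal terms $j=k$ produce $\psi(\ldots,\nabla_X(JY_k),\ldots)$ on one side and $\psi(\ldots,J\nabla_X Y_k,\ldots)$ on the other.

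The main (and essentially only) obstacle is reconciling exactly these diagonal terms, and this is where the hypothesis $\nabla_X J=0$ enters decisively: it gives $\nabla_X(JY_k)=(\nabla_X J)Y_k+J(\nabla_X Y_k)=J(\nabla_X Y_k)$, so the two diagonal contributions coincide and the correction terms cancel in the difference $\nabla_X(B\psi)-B(\nabla_X\psi)$. Conceptually, this records that the type subbundle $\Omega^{r,s}(M;E)$ is parallel because it is cut out by the parallel endomorphisms $B$ and $C$, and a parallel subbundle is automatically preserved by $\nabla$. One should note that the argument uses that $\nabla$ is complex-linear on both $TM$ and $E$; when $E$ is a tensor bundle associated to $(TM,J)$, as in all the applications here, $\nabla_X J^E=0$ follows from $\nabla_X J=0$.
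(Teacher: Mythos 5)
Your proof is correct. The paper states this lemma without any proof, treating it as a routine verification, and your argument --- differentiating the defining identity $B\psi=(r-s)C\psi$ and using that the endomorphisms $B$ and $C$ are parallel because $\nabla_X J=0$ (and $\nabla_X J^E=0$) --- is exactly the standard computation being left implicit, including your correct observation that the hypothesis must be read as covering the complex structure on $E$ as well, which holds in all of the paper's applications where $E=TM$ or a tensor bundle associated to $(TM,J)$.
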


To understand the $E$-type with respect to contractions, let us say that $\psi$ has \emph{ordinary type $(r,s)$} when in a local frame as in Section~\ref{ComplexCoordinates} we may write\Marginpar{\tiny Either sum only over ascending indices $\alpha_1<\cdots<\alpha_r$ and $\beta_1 < \cdots < \beta_s$ and then ${\psi_{\alpha_1\ldots\alpha_r,\bar\beta_1\ldots\bar\beta_s}=\psi(z_{\alpha_1},\ldots,z_{\alpha_{r}},\bar{z}_{\bar\beta_1},\ldots,\bar{z}_{\bar\beta_s})}$. Or divide by $r!s!$ for this to remain true when summing over all indices}
\begin{align}\label{ordinarytype}
\psi = \frac{1}{r!s!}\psi_{\alpha_1\ldots\alpha_r,\bar\beta_1\ldots\bar\beta_s} z^{\alpha_1}\cdots z^{\alpha_r}\wedge \bar{z}^{\bar{\beta}_1} \cdots \bar{z}^{\bar{\beta}_s},
\end{align}
the coefficients being sections of $E$, anti-symmetric for $\alpha_1\ldots\alpha_r$ and for $\bar\beta_1\ldots\bar\beta_s$.
Thus, the ordinary type behaves as expected under contraction with $(1,0)$ and $(0,1)$-vector fields.
Concerning the $E$-type, we have the following observation:

\begin{lem}\label{lem:type}
A form $\psi$ has $E$-type $(r,s)$ precisely when it is the sum of an $E^{1,0}$-valued form of ordinary type $(r,s)$ and an $E^{0,1}$-valued form of ordinary type $(s,r)$.
\end{lem}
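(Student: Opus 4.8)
The plan is to complexify the coefficient bundle, refine the resulting splitting by the ordinary type decomposition, and observe that on this bigrading the two operators occurring in the defining condition \eqref{Type1} act diagonally; the lemma then becomes a matter of matching eigenvalues. First I would write $\Omega^p(M;E)\otimes\mathbb{C}=\Omega^p(M;E^{1,0})\oplus\Omega^p(M;E^{0,1})$ and further decompose each summand into pieces $\Omega^{a,b}(M;E^{1,0})$ and $\Omega^{a,b}(M;E^{0,1})$ with $a+b=p$, using \eqref{ordinarytype}. Since the identity \eqref{Type1} is tensorial and $\mathbb{R}$-multilinear in the arguments $X_i$, and both sides are restrictions of $\mathbb{C}$-multilinear maps (as $J$ and $J^E$ are $\mathbb{C}$-linear), the identity holds for all real $X_i$ if and only if its $\mathbb{C}$-multilinear extension holds; hence it may be tested on the complex frame $z_\alpha,\bar z_{\bar\beta}$.

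Next I would introduce the operator $L$ defined by $(L\psi)_{X_1\ldots X_p}=\sum_{k=1}^p\psi_{X_1\ldots JX_k\ldots X_p}$, so that \eqref{Type1} reads exactly $L\psi=(r-s)J^E\psi$, i.e.\ the $E$-type $(r,s)$ subspace is the kernel of $L-(r-s)J^E$. The key computation is that both $L$ and the pointwise action of $J^E$ are diagonal with respect to the bigrading. Evaluating on frame elements and using $Jz_\alpha=iz_\alpha$ and $J\bar z_{\bar\beta}=-i\bar z_{\bar\beta}$, a form of ordinary type $(a,b)$ contributes $+i$ from each of its $a$ holomorphic slots and $-i$ from each of its $b$ antiholomorphic slots, so $L\psi=i(a-b)\psi$; meanwhile $J^E$ acts as $+i$ on $E^{1,0}$-valued forms and as $-i$ on $E^{0,1}$-valued forms. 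Consequently $L-(r-s)J^E$ is diagonal on the bigrading and its kernel is a direct sum of bigrading summands.

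Finally I would match eigenvalues summand by summand. On $\Omega^{a,b}(M;E^{1,0})$ the equation $i(a-b)=i(r-s)$ gives $a-b=r-s$, which with $a+b=p=r+s$ forces $(a,b)=(r,s)$; on $\Omega^{a,b}(M;E^{0,1})$ the equation $i(a-b)=-i(r-s)$ gives $a-b=s-r$, forcing $(a,b)=(s,r)$. Therefore the $E$-type $(r,s)$ subspace equals $\Omega^{r,s}(M;E^{1,0})\oplus\Omega^{s,r}(M;E^{0,1})$, which is precisely the asserted description. I expect the only genuinely delicate point to be the justification that the real identity \eqref{Type1} may be verified on the complex frame vectors; once that is granted, the remainder is the diagonalization already built into the bigrading.
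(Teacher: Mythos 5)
Your proof is correct. There is, however, nothing in the paper to compare it against: Lemma~\ref{lem:type} is stated without proof, treated as a routine observation (much like the neighboring Lemma~\ref{lem:realTMtype}, whose proof is dismissed as a ``straightforward direct verification''). Your diagonalization argument is the natural way to make that verification precise, and it works: the defining condition \eqref{Type1} is exactly the eigenvalue equation $L\psi=(r-s)J^E\psi$; both $L$ and $J^E$ act diagonally on the bigrading of $\Omega^p(M;E)\otimes\mathbb{C}$ by ordinary type in the arguments and by $E^{1,0}/E^{0,1}$ in the values, with eigenvalues $i(a-b)$ and $\pm i$ respectively; and matching $i(a-b)=\pm i(r-s)$ under the constraint $a+b=p=r+s$ singles out precisely the two summands $\Omega^{r,s}(M;E^{1,0})$ and $\Omega^{s,r}(M;E^{0,1})$. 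The one point you rightly flag as delicate --- testing the real identity on the complexified frame $z_\alpha,\bar z_{\bar\beta}$ --- is handled correctly: both sides of \eqref{Type1} are $\mathbb{R}$-multilinear in the $X_i$, so the identity holds for all real arguments if and only if its $\mathbb{C}$-multilinear extension holds on $TM\otimes\mathbb{C}$, where $Jz_\alpha=iz_\alpha$ and $J\bar z_{\bar\beta}=-i\bar z_{\bar\beta}$. Since $L-(r-s)J^E$ is diagonal, its kernel is the direct sum of the matching summands, which gives both directions of the ``precisely when'' at once, including the statement for real $\psi\in\Omega^p(M;E)$ (each of the two value-components of $\psi$ must lie in its respective eigenspace).
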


Finally, in case $E=TM$ we may use the metric $g$ to identify $(p+1)$-forms with $TM$-valued $p$-forms. Using the musical isomorphism we get a map
\begin{equation}\label{pForm_TM_Form}
i\colon \Omega^{p+1}(M)\hookrightarrow \Omega^{p}(M;TM),\quad
	i(\phi)_{X_1\ldots X_p} = {\phi_{-, X_1\ldots X_p}}^{\sharp_g}.
\end{equation}
Note that $\phi\in \Omega^{p+1}(M)$ are \emph{real} forms. From \eqref{pForm_TM_Form} we get a third type decomposition. This has been used by Gauduchon \cite[(1.3.2)]{MR1456265} in the case $p=2$.

\begin{defn}
A $(r+s)$-form $\phi$ has \emph{real type} $(r,s)+(s,r)$ when the complexification of $\phi$ is a sum of a complex $(r,s)$-form and (its conjugate) $(s,r)$-form. \Marginpar{Only the $(r,s)+(s,r)$ grading descends to real forms. This is because a real form can never be of pure type.} We write $\Omega^{(r,s)+(s,r)}(M)$ for the space of real forms of real type $(r,s)+(s,r)$.
\end{defn}

\begin{lem}\label{lem:realTMtype}
The map \eqref{pForm_TM_Form} identifies the real type decomposition
\[
\Omega^{(r,s+1)+(s+1,r)}(M)=\left[\Omega^{r,s}(M;TM)\oplus \Omega^{s+1,r-1}(M;TM)\right]\cap \Omega^{p+1}(M),
\]
with the $TM$-grading. When ${p=n}$, we get ${\Omega^{0,n}(M;TM)\cap \Omega^{n+1}(M) = \{0\}}$.
\end{lem}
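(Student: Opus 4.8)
The plan is to complexify everything and to track the action of the map \eqref{pForm_TM_Form} on each complex bidegree $(a,b)$ with $a+b=p+1$ (here $p=r+s$), working in a local frame as in \S\ref{ComplexCoordinates}. The decisive structural fact is that the $\C$-bilinear extension of $g$ pairs $T^{1,0}$ with $T^{0,1}$ and annihilates each of them separately. Hence raising the first slot with $\sharp_g$ has a definite effect: recovering the $T^{1,0}$-value amounts to evaluating the first slot on a $(0,1)$-vector, while recovering the $T^{0,1}$-value amounts to evaluating it on a $(1,0)$-vector.

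First I would compute, for a pure type $(a,b)$ form $\phi$, that $i(\phi)$ splits as a $T^{1,0}$-valued form of ordinary type $(a,b-1)$ (raising one of the $b$ antiholomorphic slots) plus a $T^{0,1}$-valued form of ordinary type $(a-1,b)$ (raising one of the $a$ holomorphic slots). By Lemma~\ref{lem:type} these summands carry $TM$-type $(a,b-1)$ and $(b,a-1)$ respectively, so $i(\Omega^{a,b}(M))\subseteq\Omega^{a,b-1}(M;TM)\oplus\Omega^{b,a-1}(M;TM)$. Taking $(a,b)=(r,s+1)$ together with its conjugate $(a,b)=(s+1,r)$ and adding shows that a real form $\phi$ of real type $(r,s+1)+(s+1,r)$ has $i(\phi)\in\Omega^{r,s}(M;TM)\oplus\Omega^{s+1,r-1}(M;TM)$. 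This is the forward inclusion.

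For the reverse inclusion the key point is that the two summands of $i(\phi)$ are valued in the complementary subbundles $T^{1,0}$ and $T^{0,1}$, so they cannot cancel, and that raising an index with the nondegenerate $h^{\alpha\bar\beta}$ is injective on each bidegree. Consequently the $TM$-type $(c,d)$ component of $i(\phi)$, which receives exactly the $T^{1,0}$-valued contribution of $\phi^{(c,d+1)}$ and the $T^{0,1}$-valued contribution of $\phi^{(d+1,c)}$, vanishes if and only if both $\phi^{(c,d+1)}$ and $\phi^{(d+1,c)}$ vanish. Thus, if $i(\phi)$ lies in $\Omega^{r,s}(M;TM)\oplus\Omega^{s+1,r-1}(M;TM)$, the vanishing conditions coming from all other $TM$-types kill every bidegree of $\phi$ except $(r,s+1)$ and $(s+1,r)$; since $\phi$ is real these survive as a conjugate pair, whence $\phi\in\Omega^{(r,s+1)+(s+1,r)}(M)$.

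The case $p=n$ then drops out: a $TM$-type $(0,n)$ part of $i(\phi)$ could only arise from $\phi^{(0,n+1)}$ or $\phi^{(n+1,0)}$, both of which vanish since there are only $n$ independent holomorphic (resp.\ antiholomorphic) covectors. Hence $i(\Omega^{n+1}(M))\cap\Omega^{0,n}(M;TM)=\{0\}$. I expect the main obstacle to be purely organisational: keeping the translation between ordinary type and $TM$-type (via Lemma~\ref{lem:type}) consistent, and handling the antisymmetrization so that ``raising the first slot'' is well defined independently of how the remaining slots are ordered. The boundary cases $r=0$ (where $\Omega^{s+1,r-1}(M;TM)$ is to be read as $0$) and $a=0$ or $b=0$ (where one valued summand is automatically absent) should be checked separately, but they are consistent with the general formulas.
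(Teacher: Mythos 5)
Your proof is correct, and its computational core coincides with what the paper's (very terse) proof calls a ``straightforward direct verification'': namely, the coordinate check that for a pure $(a,b)$-form $\phi$ the map \eqref{pForm_TM_Form} produces a $T^{1,0}$-valued piece of ordinary type $(a,b-1)$ plus a $T^{0,1}$-valued piece of ordinary type $(a-1,b)$, hence lands in $\Omega^{a,b-1}(M;TM)\oplus\Omega^{b,a-1}(M;TM)$ by Lemma~\ref{lem:type}. Where you genuinely diverge is the second half. The paper stops after this forward inclusion: since the real-type grading decomposes all of $\Omega^{p+1}(M)$ and the $TM$-type grading decomposes all of $\Omega^{p}(M;TM)$, with the real type $(r,s+1)+(s+1,r)$ matched to the disjoint pair of $TM$-types $\{(r,s),(s+1,r-1)\}$, elementary linear algebra upgrades the system of inclusions to equalities --- if $V=\bigoplus_i A_i$ sits inside $W=\bigoplus_i C_i$ (plus possibly further summands) with $A_i\subseteq C_i$, then automatically $A_i=C_i\cap V$. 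You instead prove the reverse inclusion by hand, tracking that the $TM$-type $(c,d)$ component of $i(\phi)$ is fed precisely by $\phi^{(c,d+1)}$ and $\phi^{(d+1,c)}$, that no cancellation can occur because the two contributions take values in the complementary bundles $T^{1,0}$ and $T^{0,1}$, and that each contribution is injective in its source because $h^{\alpha\bar\beta}$ is nondegenerate and antisymmetry lets every component be recovered through the first slot. This costs extra bookkeeping that the paper's reduction renders unnecessary, but it buys a self-contained argument whose injectivity statements are made explicit, and it delivers the $p=n$ assertion directly (no $(n+1,0)$- or $(0,n+1)$-forms exist on a complex $n$-dimensional manifold) rather than as the degenerate instance $r=0$, $s=n$ of the displayed identity.
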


\begin{proof}
Since both gradings decompose the entire space, it is enough to show an inclusion, which is a straightforward direct verification.
\end{proof}

\subsection{Traces}
We identify $A \in \End_{\mathbb{C}}(TM\otimes\mathbb{C})$ with the $(0,2)$-tensor\Marginpar{Important: use $g$, not $h$, so that $\phi$ is complex bilinear. So $A$ is $\phi$ up to the positioning of indices $\phi_{a}^{\enskip b} = A_a^b$.}
\begin{align}\label{Eqn:TensorEndomorphism}
	\phi_{XY}=g(A(X), Y).
\end{align}
Then $A$ is skew-Hermitian when $\phi$ is a $2$-form and complex linear for $J$ when $\phi$ is $J$-invariant. The (\emph{Lefschetz}) \emph{trace} of $\phi$ is\Marginpar{For $A$ a complexification $\Lambda(\phi)=\Im(\phi_\alpha^{\enskip\alpha}) \in \mathbb{R}$.}
\begin{align}\label{Eqn:LefschetzTrace}
\Lambda(\phi) = \frac12 h(\phi, F) = \frac{i}{2} \left(\phi^\alpha_{\enskip\alpha} - \phi^{\enskip\alpha}_\alpha\right)
\end{align}
using the inner product on $(0,2)$-tensors. This is just the Lefschetz trace of the anti-symmetrization of $\phi$. 
When $A$ is skew-Hermitian we thus have $\Lambda(\phi)=-i \phi_\alpha^{\enskip\alpha}$.

\noindent
The \emph{trace} of $A$ is given by
\begin{align}\label{Eqn:Trace}
\tr_\mathbb{C} A = A^\alpha_\alpha + A^{\bar\alpha}_{\bar\alpha} = \phi_\alpha^{\enskip\alpha} + \phi^\alpha_{\enskip\alpha}.
\end{align}
Then for $\tilde{A} = -J \circ A$ we have\Marginpar{$\phi^\alpha_{\enskip\alpha}=\frac12\tr_\mathbb{C} A - i \Lambda(\phi)$}
\begin{align}\label{Eqn:ComplexSummation}
\phi_\alpha^{\enskip\alpha} = \frac12\tr_\mathbb{C} A + i \Lambda(\phi) = \frac12\tr_\mathbb{C} A + \frac{i}{2}\tr_\mathbb{C} (\tilde{A})
\end{align}

Suppose $A$ is the complexification\Marginpar{Caution: when other complex indices $\beta$ remain, e.g.~$A_{\alpha\enskip\beta}^{\enskip\alpha}$, it is not a complexification.}
 of $a\in \End_\mathbb{R}(TM)$.\Marginpar{So when $\psi$ is a complexification the component of ordinary $(r,s)$-type is the conjugate of the component of ordinary $(s,r)$-type (i.e.~post compose with $T^{0,1}\cong T^{1,0}$).} Then $\overline{\phi_{ab}} = \phi_{\bar{a}\bar{b}}$ and the general fact $\phi^{\alpha}_{\enskip\alpha} = \phi_{\bar\alpha}^{\enskip\bar\alpha}$ imply that \eqref{Eqn:LefschetzTrace}, \eqref{Eqn:Trace} are real. Moreover, $\tr_\mathbb{C} A = \tr_\mathbb{R} a = a_i^{\;\, i}$. If $a$ is complex linear for $J$ then $\tr_\mathbb{R} a = 2 \tr_\mathbb{C} a$.


\subsection{Norms}

Let $E$ be a complex vector bundle on $M$ with Hermitian form $\langle,\rangle$. 
Generalizing the case $E=\C$, the norm of an $E$-valued differential $p$-form is\Marginpar{${\frac12|\psi|^2_{\otimes^3 T^*M} = |\psi|^2_{\Omega^2(M;TM)} = 3|\psi|^2_{\Omega^3(M)}}$}
\begin{align}\label{norms}
	|\psi|^2_{\Omega^p(M;E)} = \frac{1}{p!} \langle\psi_{i_1\ldots i_p}, \psi^{i_1\ldots i_p}\rangle = \frac{1}{p!} g^{i_1j_1}\cdots g^{i_pj_p} \langle \psi_{i_1\ldots i_p}, \psi_{j_1\ldots j_p}\rangle.
\end{align}
Unless $p=1$ we shall not follow \cite{MR1456265} in identifying $TM$-valued $p$-forms with ${(0,p+1)}$\nobreakdash-tensors, since this leads to different conventions for the norm. 
We will only need \eqref{norms} in the cases $E=\C$ and $E=TM$.
When an $E$-valued $p$-form $\psi$ is decomposed as a sum of elements \eqref{ordinarytype} then\Marginpar{Summing over all $r+s=p$. Note that indices are raised using the metric on $M$, while $\langle,\rangle$ is the metric on $E$}
\begin{align}\label{PQnorm}
|\psi|^2_{\Omega^p(M;E)}  = \frac{1}{r!s!} \langle\psi_{\alpha_1\ldots\alpha_r,\bar\beta_1\ldots\bar\beta_s},\psi^{\bar\alpha_1\ldots\bar\alpha_r,\beta_1\ldots\beta_s}\rangle.
\end{align}
In particular, the decomposition into $E$-type is orthogonal.

\begin{lem}\label{lem:comparisionNorms}
Let $\phi \in \Omega^{(r,r+1)+(r+1,r)}(M)$ be a real form with associated $TM$-valued form $\psi=i(\phi)$ using \eqref{pForm_TM_Form}. For the $\Omega^{2r}(M;TM)$-norm of the projections we have
\begin{equation}\label{comparisionNorms}
\frac{1}{r!r!}|\psi^{r+1,r-1}|^2 = \frac{1}{(r-1)!(r+1)!}|\psi^{r,r}|^2.
\end{equation}
When $n=2$, we have $2|\psi^{2,0}|=|\psi^{1,1}|^2$ for \emph{every} $3$-form $\psi$ \textup(see also \cite[(1.3.9)]{MR1456265}\textup).
\end{lem}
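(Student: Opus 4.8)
The plan is to reduce \eqref{comparisionNorms} to a bookkeeping computation in a unitary frame, tracking how the musical map \eqref{pForm_TM_Form} redistributes the indices of $\phi$ between the $TM$-value and the remaining $2r$ form slots. Since the norms are pointwise and quadratic in $\phi$, and since $\phi$ is real, by Lemma~\ref{lem:realTMtype} its associated $\psi=i(\phi)$ splits as $\psi=\psi^{r,r}+\psi^{r+1,r-1}$, and it suffices to analyze the complex $(r,r+1)$-part $\phi^{r,r+1}$; the conjugate $\phi^{r+1,r}$ contributes the conjugate data and is handled symmetrically.

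First I would split each projection according to Lemma~\ref{lem:type}. Writing $\psi^{r,r}=\mu+\overline{\mu}$ with $\mu$ a $T^{1,0}$-valued form of ordinary type $(r,r)$, and $\psi^{r+1,r-1}=\nu+\overline{\nu}$ with $\nu$ a $T^{0,1}$-valued form of ordinary type $(r-1,r+1)$, reality of $\psi=i(\phi)$ forces the two summands in each case to be complex conjugates. Because $T^{1,0}\perp T^{0,1}$ for the Hermitian extension, these pieces are orthogonal, so $|\psi^{r,r}|^2=2|\mu|^2$ and $|\psi^{r+1,r-1}|^2=2|\nu|^2$. Hence \eqref{comparisionNorms} is equivalent to the assertion $(r-1)!(r+1)!\,|\nu|^2=r!r!\,|\mu|^2$.

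Next I would express $\mu$ and $\nu$ directly through $\phi^{r,r+1}$. Since $g_{\mathbb{C}}$ pairs $T^{1,0}$ with $T^{0,1}$, sharping a covector swaps holomorphic and antiholomorphic type: the $T^{1,0}$-value of $i(\phi^{r,r+1})$ arises by feeding an antiholomorphic vector into the free slot of \eqref{pForm_TM_Form}, so $\mu$ is obtained by raising one of the $r+1$ antiholomorphic indices of $\phi^{r,r+1}$ into the value (leaving ordinary type $(r,r)$), while dually $\nu$ is obtained by raising one of the $r$ holomorphic indices (leaving ordinary type $(r-1,r+1)$). Computing both norms via \eqref{PQnorm} in a frame with $h_{\alpha\bar\beta}=\delta_{\alpha\beta}$, the antisymmetry of the coefficients means that summing over which index becomes the value collapses \emph{both} $|\mu|^2$ and $|\nu|^2$ onto the same total sum $S=\sum|\phi_{\alpha_1\ldots\alpha_r,\bar\beta_1\ldots\bar\beta_{r+1}}|^2$, weighted only by the prefactors in \eqref{PQnorm}: one finds $|\mu|^2=\tfrac{1}{r!r!}S$ and $|\nu|^2=\tfrac{1}{(r-1)!(r+1)!}S$, whence $(r-1)!(r+1)!\,|\nu|^2=S=r!r!\,|\mu|^2$ as required.

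The one point demanding care---the main obstacle---is precisely this combinatorial bookkeeping: one must verify that ``raising one antiholomorphic index'' and ``raising one holomorphic index'' genuinely yield the same coefficient sum $S$, and that the normalization attached to each norm is identical. The latter is what makes the identity frame-independent: $\mu$ and $\nu$ have the same total form-degree $2r$, hence the same number of metric raisings of form indices, and each carries a single value-contraction, whose normalization on $T^{1,0}$ equals that on $T^{0,1}$; any overall constant therefore enters both sides equally and cancels. Once this is confirmed, \eqref{comparisionNorms} is exactly the statement that the factors $\tfrac{1}{r!r!}$ and $\tfrac{1}{(r-1)!(r+1)!}$ interchange across the equation. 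Finally, for $n=2$ one has $r=1$; since $(3,0)$- and $(0,3)$-forms vanish for dimension reasons, every $3$-form is automatically of real type $(1,2)+(2,1)$, and the general identity specializes to $2|\psi^{2,0}|^2=|\psi^{1,1}|^2$.
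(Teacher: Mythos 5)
Your proof is correct. The paper states this lemma without proof, treating it as a routine verification, and your argument is precisely that verification: splitting $\psi^{r,r}=\mu+\bar\mu$ and $\psi^{r+1,r-1}=\nu+\bar\nu$ via Lemma~\ref{lem:type}, using that reality of $\psi$ makes the summands conjugate and that $T^{1,0}\perp T^{0,1}$ kills the cross terms, and then checking in a unitary frame that both $r!\,r!\,|\mu|^2$ and $(r-1)!\,(r+1)!\,|\nu|^2$ collapse, by antisymmetry of the coefficients, onto the same sum $S=\sum|\phi_{\alpha_1\ldots\alpha_r\bar\beta_1\ldots\bar\beta_{r+1}}|^2$, so that only the combinatorial prefactors of \eqref{PQnorm} distinguish the two sides. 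Your identification of which type of index gets raised into the value (an antiholomorphic index for the $T^{1,0}$-valued piece $\mu$ of ordinary type $(r,r)$, a holomorphic one for $\nu$) is the correct bookkeeping for the map \eqref{pForm_TM_Form}. You also correctly read the $n=2$ statement as $2|\psi^{2,0}|^2=|\psi^{1,1}|^2$ (the printed left-hand side omits a square), justified by the vanishing of $(3,0)$- and $(0,3)$-forms in complex dimension two.
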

%

\subsection{Chern Connection}

The almost complex structure is parallel for the Levi-Civita connection $D^g$ precisely when $M$ is K\"ahler. Therefore one considers other metric connections that make $J$ parallel.

\begin{defn}
The \emph{Chern connection} $\nabla$ is the unique Hermitian connection on $TM$ whose $(0,1)$-part is the canonical Cauchy--Riemann operator
\begin{align}\label{Eqn:CauchyRiemann}
\bar\partial_X Z &= [X^{0,1}, Z]^{1,0},\qquad X\in TM, Z\in C^\infty(M,T^{1,0}).
\end{align}
(recall that a Hermitian connection is required to satisfy $\nabla g = 0, \nabla J = 0$.)
\end{defn}

Equivalently, the Chern connection is the unique Hermitian connection whose torsion tensor $T_{XY}=\nabla_X Y-\nabla_Y X-[X,Y]$ is $J$-anti-invariant. The decomposition of $T\in \Omega^2(M;TM)$ into $TM$-type is then given by (see~\cite[p.~272]{MR1456265})
\begin{align}\label{eqn:torsion}
	T^{0,2}&= N,
	&T^{1,1}&=0,
	&T^{2,0}&=(d^cF)^{2,0}.
\end{align}
Here $d^cF$ is a $TM$-valued $2$-form via \eqref{pForm_TM_Form} and we take the $(2,0)$-part of its $TM$-type.

\begin{rem}\label{kaehlercase}
If $T=0$ for the torsion of the Chern connection of an almost Hermitian manifold, then $\nabla=D^g$. Hence $J$ is parallel for $D^g$ and the structure is K\"ahler.
\end{rem}

\subsection{Torsion $1$-Form}\label{ssec:torsion}

Besides not being integrable, the difficulty in dealing with almost Hermitian manifolds is that the fundamental form is not closed ($dF=0$ holds when $M$ is \emph{almost K\"ahler}). We thus consider the \emph{torsion $1$-form}
\begin{align}\label{thetaDef}
\theta = \Lambda(dF) = J\delta F.
\end{align}
The almost Hermitian structure is \emph{Gauduchon} if $\delta^g \theta = 0$, and is \emph{balanced} if $\theta=0$. 

\Marginpar{Then $\rho$ is exact and $\int_M s^C \vol =\int_M \rho\wedge \frac{F^{n-1}}{(n-1)!}= \langle \rho, F\rangle_{L^2}$ and $F$ is co-closed.}

It is easy to check that the torsion $1$-form $\theta_X=\tr(Z\mapsto T_{XZ})$ is the trace of the torsion tensor of the Chern connection. Thus
\begin{align}
\theta = T_{\alpha\beta}^{\quad\beta} z^\alpha + T_{\bar\alpha\bar\beta}^{\quad\bar\beta} \bar{z}^{\bar\alpha}.
\end{align}
An equivalent definition is $dF=(dF)_0 + \frac{1}{n-1}\theta\wedge F$, for the trace-free part $(dF)_0$.


\subsection{Ricci Forms}

Let $R$ be a $2$-form with values in skew-Hermitian endomorphisms of $TM$, for example the curvature tensor $R^\nabla_{XY}=[\nabla_X,\nabla_Y]-\nabla_{[X,Y]}$ of the Chern connection. In the integrable case, the $2$-form $R^\nabla$ is $J$-invariant.
In general, the complexification of $R$ is not of type $(1,1)$ and has more components\Marginpar{Warning: some authors use the notation $g(R_{e_i,e_j}e_k,e_l)=R_{lkij}$. We dont.}
\begin{align*}
R=\left(\frac12R_{\alpha\beta\gamma}^{\;\;\;\;\;\;\delta}\, z^\alpha\wedge z^\beta +
R_{\alpha\bar\beta\gamma}^{\;\;\;\;\;\;\delta}\, z^\alpha\wedge \bar{z}^{\bar{\beta}}+
\frac12R_{\bar\alpha\bar\beta\gamma}^{\;\;\;\;\;\;\delta}\, \bar{z}^{\bar\alpha}\wedge \bar{z}^{\bar\beta}\right)\otimes z^\gamma \otimes z_\delta.
\end{align*}

Following \cite{MR742896}, we consider three ways to contract the tensor $R$:

\begin{defn}
The \emph{first} (or \emph{Hermitian}) \emph{Ricci form} $\rho$ or $R$ is the trace
\Marginpar{The trace of the complexification of $J\circ R_{XY}$ is twice of this.}\Marginpar{In Gauduchon, a minus sign is needed to match his opposite convention for the curvature tensor. In the integrable case, our Ricci forms and scalar curvatures agree with those of Gauduchon.}
\begin{align}
	\rho_{XY}=\tr_\mathbb{C} \left(J\circ R_{XY}\right) = -\Lambda(R_{XY}).
\end{align}
The complexification of $\rho$ has components
\begin{align}\label{firstRicci}
\rho = \frac{i}{2} R\indices{_{\alpha\beta\gamma}^\gamma} z^\alpha\wedge z^\beta
+i R\indices{_{\alpha\bar\beta\gamma}^\gamma} z^\alpha\wedge \bar{z}^{\bar\beta}
+\frac{i}{2} R\indices{_{\bar\alpha\bar\beta\gamma}^\gamma} \bar{z}^{\bar\alpha}\wedge \bar{z}^{\bar\beta}.
\end{align}
\end{defn}

The first Ricci form is always closed and, when $J$ is integrable, is of type $(1,1)$.
Its cohomology class $2\pi c_1(TM,J)$ is the first Chern class of $M$.

\begin{defn}
The \emph{second Ricci form} of $R$ is $r_{XY}=-\Lambda(R_{\,\cdot,\cdot,XY})$, so
\begin{align}\label{secondRicci}
r =  iR\indices{_\gamma^\gamma_{\lambda\bar\mu}} z^\lambda\wedge\bar{z}^{\bar\mu},
\end{align}
which is always a $(1,1)$-form, but not closed in general.\Marginpar{Even when $J$ is integrable}
\end{defn}

\begin{defn}
The \emph{third Ricci form} is
\begin{align}\label{thirdRicci}
\sigma = 
\frac{i}{2}R\indices{_{\mu\alpha}^\alpha_\lambda} z^\lambda\wedge z^\mu
+
iR\indices{_{\bar\mu\alpha}^\alpha_\lambda} z^\lambda\wedge \bar{z}^{\bar\mu}
-
\frac{i}{2}R\indices{_{\bar\mu}^\alpha_{\alpha\bar\lambda}} \bar{z}^{\bar\lambda}\wedge \bar{z}^{\bar\mu}
\end{align}
\end{defn}

\subsection{Scalar Curvatures} The Lefschetz traces of $\rho$ and $r$ agree. We thus define:

\begin{defn}
The \emph{Chern scalar curvature} of $R$ is
\begin{align}
s^C &= \Lambda(\rho) = \Lambda(r) = R_{\alpha\enskip\gamma}^{\enskip\alpha\enskip\gamma} = -\frac14  R_{e_i\enskip\;\, e_j}^{\enskip Je_i\enskip Je_j}
\end{align}
\end{defn}

The \emph{Hermitian scalar curvature} is $s^H=2\cdot s^C$ and coincides with the Riemannian scalar curvature in the K\"ahler case. The case $c_1(M)=0$ implies for instance that the total Hermitian scalar curvature vanishes.

\Note{
There are two approaches to convert complex into real coordinates, for example ${R_{\alpha\enskip\gamma}^{\enskip\alpha\enskip\gamma} = -\frac14  R_{e_i\enskip\;\, e_j}^{\enskip Je_i\enskip Je_j}}$. The first is based on \eqref{Eqn:Trace}, so $A_i^i = A_\alpha^\alpha + A_{\bar\alpha}^{\bar\alpha}$ when $A$ is a complexification. This gives
\begin{align}
R_{e_i\enskip e_j}^{\enskip Je_i\enskip Je_j} = R_{\alpha\enskip\beta}^{\enskip i\alpha\enskip i\beta}
+R_{\alpha\enskip\;\bar\beta}^{\enskip i\alpha\enskip -i\bar\beta}
+R_{\bar\alpha\enskip\enskip\enskip\beta}^{\enskip -i\bar\alpha\enskip i\beta}
+R_{\bar\alpha\enskip\enskip\enskip\bar\beta}^{\enskip -i\bar\alpha\enskip -i\bar\beta}
= -4 R_{\alpha\enskip\beta}^{\enskip\alpha\enskip\beta}
\end{align}
using $R_{\bar\alpha}^{\enskip\bar\alpha} = R_{\enskip\alpha}^{\alpha}=-R_\alpha^{\enskip\alpha}$. The second method uses \eqref{Eqn:ComplexSummation}. Define $b_{XY}=\frac12\tr R_{XY}$ and $c_{XY}=\Lambda(R_{XY})$ with complexifications $B,C$. Let $A_{\alpha\beta}=R_{\alpha\beta\gamma}^{\enskip\enskip\enskip\;\gamma}$. Then $A_{\alpha\beta} = B_{\alpha\beta} + iC_{\alpha\beta}$, according to \eqref{Eqn:ComplexSummation}. Hence
\begin{align*}
R_{\alpha\enskip\gamma}^{\enskip\alpha\enskip\gamma} \overset{\eqref{Eqn:ComplexSummation}}{=}& \frac12\tr (R_{\cdot\cdot\gamma}^{\enskip\enskip\gamma}) + i\Lambda(R_{\cdot\cdot\gamma}^{\enskip\enskip\gamma}) = \frac12\tr(A) + i\Lambda(A)
\end{align*}
which using $\tr(B)=\tr(b)$, $\tr(C)=\tr c$ becomes
\begin{align*}
R_{\alpha\enskip\gamma}^{\enskip\alpha\enskip\gamma}=\frac12\tr(b) + \frac{i}{2}\tr(c) + i\Lambda(b) - \Lambda(c)
\end{align*}
Now $\tr(b)=\tr(c)=\Lambda(b)=0$ using the skew symmetry of $R$ and
\[
	\Lambda(c) = \frac12 c_{e_i,Je_i} = \frac12\Lambda(R_{e_i,Je_i}) = \frac14 R_{e_i,Je_i,e_j,Je_j}
\]
}

\begin{defn}
 The \emph{third scalar curvature} is the Lefschetz trace
 \begin{align}\label{Eqn:ThirdScalarCurvature}
 s = \Lambda(\sigma) = R_{\alpha\enskip\beta}^{\enskip\beta\enskip\alpha}
=\frac12 R_{i\;\,j}^{\;\,j\;\,i} = -\frac12 R_{ij}^{\;\,\;\,ij}
 \end{align}
of the third Ricci form\Marginpar{real-valued}. Alternatively, $s$ is the 
trace of the curvature operator.
\end{defn}

We shall also consider the \emph{Riemannian scalar curvature} $s^g$ formed as usual from the Levi-Civita connection $D^g$.

\begin{rem}
Suppose $J$ is integrable. Then the three Ricci forms coincide with those of \cite[Section I.4]{MR742896}. Gauduchon uses the notation $u$ for $s^C$ and $v$ for $s$. Liu--Yang define similar scalar curvatures from the Levi-Civita connection. The third scalar curvature $s$ is like their `Riemannian type scalar curvature' $s_R$ in \cite[4.2]{Liu-Yang}.
\end{rem}

\section{Comparison of Curvatures}\label{sec:ComparisionScalar}

When $M$ is not K\"ahler, the three scalar curvatures defined above do not coincide. In this section, we quantify the differences. 
This is based on the following.
Recall that the algebraic Bianchi identity for connections on $TM$ with torsion asserts
\begin{equation}
\begin{aligned}\label{algebraicBianchi}
R_{XY}Z + R_{ZX}Y + R_{YZ}X = (&\nabla_XT)_{YZ}+(\nabla_ZT)_{XY}+(\nabla_YT)_{ZX}\\
-& T(X,T_{YZ}) -T(Z,T_{XY}) -T(Y,T_{ZX}).
\end{aligned}
\end{equation}
Also, by \cite[(2.1.4)]{MR742896} the difference between the Chern and Levi-Civita connection is
\begin{align}\label{Eqn:ChernLeviCivita}
g(\nabla_X Y,Z) = g(D^g_X Y,Z) + \frac32 t_{XYZ} - g(X,T_{YZ}),
\end{align}
for the anti-symmetrization
\begin{align}
t_{XYZ}= \frac13\left(g(X,T_{YZ}) + g(Z,T_{XY}) + g(Y,T_{ZX})\right)
\end{align}
of the torsion tensor. Formula \eqref{Eqn:ChernLeviCivita} is also easily deduced from Lemma~\ref{Lem:ChernConnection} below. As a consequence of~\eqref{eqn:torsion} we note (see also~\cite[(2.5.10)]{MR1456265})
\begin{align}\label{tdcF}
t=\frac13 d^cF.
\end{align}

\begin{prop}\label{prop:ScalarComparision}
We have\Marginpar{In your notes, $s^\nabla$ is what we call $2s$ and same for $s^C$. Moreover, you use the $\otimes^3 T^*M$ tensorial norm for $T$, which is twice of our $\Omega^2(M;TM)$-norm. Thus, our formulas agree.} $s^C - s = \frac12|\theta|^2+\frac12\delta^g\theta - \frac92|t|^2+\frac12|T|^2$.
\end{prop}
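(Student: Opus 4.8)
The plan is to exploit the fact that $s^C$ and $s$ are two different Lefschetz traces of the \emph{same} Chern curvature tensor $R=R^\nabla$, so that their difference measures precisely the failure of the pair-interchange symmetry $R_{XYZW}=R_{ZWXY}$, which for a connection with torsion is governed by the algebraic Bianchi identity \eqref{algebraicBianchi}. Concretely, since $s^C=\Lambda(r)$ and $s=\Lambda(\sigma)$, I would first write $s^C-s=\Lambda(r-\sigma)$ and, comparing the $(1,1)$-components \eqref{secondRicci} and \eqref{thirdRicci}, note that $r$ contracts the two endomorphism legs together while $\sigma$ contracts an endomorphism leg against a form leg. Thus, after taking the Lefschetz trace, $s^C-s$ differs from a transposition of two of the first three slots of $R$, and hence is a contraction of the cyclic combination $R_{XY}Z+R_{ZX}Y+R_{YZ}X$ over a $J$-adapted frame, i.e.\ exactly the left-hand side of \eqref{algebraicBianchi}.

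Next I would substitute \eqref{algebraicBianchi} and split the right-hand side into its first-order part $(\nabla_XT)_{YZ}+(\nabla_ZT)_{XY}+(\nabla_YT)_{ZX}$ and its quadratic part $-T(X,T_{YZ})-T(Z,T_{XY})-T(Y,T_{ZX})$. For the first-order part, the relevant contraction should collapse to a single divergence, because $\theta$ is the trace of the torsion (Section~\ref{ssec:torsion}); converting the Chern covariant derivative to the Levi-Civita one via \eqref{Eqn:ChernLeviCivita} then replaces $\nabla$ by $D^g$ and produces the term $\tfrac12\delta^g\theta$, at the cost of further purely quadratic torsion corrections. This conversion is where \eqref{tdcF} first enters, since the correction in \eqref{Eqn:ChernLeviCivita} is built from $t=\tfrac13 d^cF$.

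For the quadratic part, I would decompose $T$ by its $TM$-type \eqref{eqn:torsion}, writing $T=N+(d^cF)^{2,0}$, and evaluate each contraction of $T\otimes T$ in a complex $J$-adapted frame, using the norm conventions \eqref{norms} and \eqref{PQnorm} to recognise the results as the invariants $|T|^2$, $|t|^2=\tfrac19|d^cF|^2$ and $|\theta|^2$. Collecting the genuinely quadratic Bianchi terms together with the corrections generated by the previous step should then assemble into $\tfrac12|\theta|^2-\tfrac92|t|^2+\tfrac12|T|^2$; the factor $9=3^2$ is the tell-tale trace of $t=\tfrac13 d^cF$. The main obstacle is exactly this last bookkeeping: there are several inequivalent ways to contract two copies of the torsion, and keeping track of how each interacts with the complex structure—in particular the cross terms between the $N$ and $(d^cF)^{2,0}$ parts, which must recombine into the three named invariants—is what pins down the precise coefficients. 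I expect the cleanest route is to carry out the entire contraction in complex coordinates from the outset, so that the $J$-traces become ordinary index contractions and the orthogonality of the type decomposition is automatic.
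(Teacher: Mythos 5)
Your proposal is correct and follows essentially the same route as the paper: there too, $s^C-s$ is obtained as the double trace of the algebraic Bianchi identity \eqref{algebraicBianchi} in a $J$-adapted, Chern-parallel frame, the contracted $\nabla T$-terms are converted through \eqref{Eqn:ChernLeviCivita} into $\tfrac12\delta^g\theta+\tfrac12|\theta|^2$, and the quadratic torsion terms are handled algebraically. The only difference is that the paper never needs the decomposition $T=N+(d^cF)^{2,0}$: the quadratic contraction is recognized outright as $T_{ijk}T^{jki}=9|t|^2-|T|^2$ (equation \eqref{tTnorm}, a frame-free antisymmetrization identity), which sidesteps the $N$-versus-$(d^cF)^{2,0}$ cross-term bookkeeping you flag as the main obstacle.
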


Here, $|T|^2$ is given by \eqref{norms} as a $TM$-valued $2$-form and $|\theta|^2$ and $|t|^2$ are the usual norms for differential $3$-forms. 
The codifferential $\delta^g$ is taken with respect to $g$.

\begin{proof}
Since $\nabla(J)=0$, \eqref{algebraicBianchi} reduces in complex coordinates to
\begin{align}\label{Bianchi}
R_{\alpha\bar\beta\gamma}^{\enskip\quad\delta} + R_{\bar\beta\gamma\alpha}^{\quad\enskip\delta} &= (\nabla_{\bar\beta} T)_{\gamma\alpha}^{\quad\delta} - T_{\bar\beta,T(\gamma,\alpha)}^{\quad\qquad\delta}.
\end{align}
(use that $\nabla$ preserves the type decomposition of $TM$-valued forms.)

Choose an orthonormal frame $e_{2\alpha-1}, e_{2\alpha}=Je_{2\alpha-1}$ near $p\in M$ with ${\nabla_X e_i = 0}$ for all $X\in T_p M$. Such a frame may be constructed by extending parallelly an adapted orthonormal basis in $p$ along geodesic rays. We work in the basis ${z_\alpha=\frac12(e_{2\alpha-1}-ie_{2\alpha})}$ of $T^{1,0}$.
Evaluating \eqref{Eqn:ChernLeviCivita} at $p$ gives
\begin{align*}
D^g_\alpha (z_{\bar\alpha}) = D^g_{\bar\alpha}(z_\alpha) &= -\frac14\theta^\beta z_\beta - \frac14 \theta^{\bar\beta} z_{\bar\beta}.
\end{align*}
so at $p$ the codifferential reduces to
\begin{equation*}
\begin{aligned}
\frac12 \delta^g\theta &= -\bar{z}_{\bar\alpha} \theta_\alpha - z_\alpha \theta_{\bar\alpha} - \frac12|\theta|^2.
\end{aligned}
\end{equation*}
Taking the double trace over \eqref{Bianchi} we obtain
\begin{align*}
s^C-s=R_{\alpha\enskip\gamma}^{\enskip\alpha\enskip\gamma} + R^{\alpha\quad\gamma}_{\enskip\gamma\alpha} &= (\nabla_{\bar\alpha} T)_\gamma^{\enskip\bar\alpha\gamma} - T^{\alpha\qquad\;\gamma}_{\enskip T(\gamma,\alpha)}
= (\nabla_\alpha T)_{\bar\gamma}^{\enskip\alpha\bar\gamma} - T^{\bar\alpha\qquad\;\bar\gamma}_{\enskip T(\bar\gamma,\bar\alpha)}.
\end{align*}
(the last equation holds since $s^C-s$ is real.) In our frame
\begin{align*}
\frac12\big((\nabla_{\bar\alpha} T)_\gamma^{\enskip\bar\alpha\gamma}
+(\nabla_{\alpha} T)_\gamma^{\enskip\alpha\bar\gamma}\big) = -\bar{z}_{\bar\alpha}\theta_\alpha - z_\alpha \theta_{\bar\alpha}
\end{align*}
at the point $p$. Thus
\begin{align*}
s^C - s = \frac12 (|\theta|^2 + \delta\theta) - \frac12\left( T^{\alpha\qquad\;\gamma}_{\enskip T(\gamma,\alpha)}+ T^{\bar\alpha\qquad\;\bar\gamma}_{\enskip T(\bar\gamma,\bar\alpha)}\right).
\end{align*}
Now apply the easy identities $T^{\alpha\qquad\;\gamma}_{\enskip T(\gamma,\alpha)} = T^{\bar\alpha\qquad\;\bar\gamma}_{\enskip T(\bar\gamma,\bar\alpha)} = T_{\alpha\beta\gamma} T^{\beta\gamma\alpha}$ and
\begin{equation}\label{tTnorm}
9|t|^2-|T|^2 = 2T_{\alpha\beta\gamma}T^{\beta\gamma\alpha} = T_{ijk}T^{jki}.\qedhere
\end{equation}
\end{proof}

\begin{prop}\label{prop:ScalarComparision1}
For the Riemannian scalar curvature $s^g$ we have\Marginpar{I now use the $\Omega^2(M;TM)$-norm, instead of the $\otimes^3$-norm used by you and Gauduchon}
\begin{align}\label{ssg}
2s-s^g = |T|^2-\frac92|t|^2-2\delta\theta - |\theta|^2 
\end{align}
\end{prop}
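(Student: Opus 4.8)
The plan is to compute $2s$ and $s^g$ as the full real double trace of the curvature operators of the Chern and Levi--Civita connections, and to exploit that these two connections differ by the explicit tensor of \eqref{Eqn:ChernLeviCivita}. Set $A \coloneqq \nabla - D^g$, so that $g(A_XY,Z)=\tfrac32 t_{XYZ}-g(X,T_{YZ})$; since both connections are metric, each $A_X$ is a skew-symmetric endomorphism of $TM$. With the index convention of \S\ref{sec:Preliminaries}, both $2s$ and $s^g$ equal $-R_{ij}{}^{ij}$ of the respective curvature (this matching of traces is itself a short convention check), so that
\[
2s - s^g = -\big(R^\nabla - R^{D^g}\big)_{ij}{}^{ij}.
\]
Because $D^g$ is torsion-free, the standard comparison of curvatures under a change of connection gives
\[
R^\nabla_{XY} = R^{D^g}_{XY} + (D^g_X A)_Y - (D^g_Y A)_X + [A_X,A_Y],
\]
splitting the difference into a first-order part linear in $D^gA$ and a zeroth-order part quadratic in $A$.

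For the computation I would work at a point $p$ in a convenient orthonormal frame (either Levi--Civita normal, or the Chern-normal frame $\nabla_X e_i=0$ of the previous proof, transferring the values of $D^g e_i$ at $p$ through \eqref{Eqn:ChernLeviCivita}) and contract the two parts separately. The quadratic part $\sum_{ij} g([A_{e_i},A_{e_j}]e_i,e_j)$ is purely algebraic in $A$, hence in $T$ and $t=\tfrac13 d^cF$; after expanding the three cross-types ($T\!\cdot\!T$, $t\!\cdot\!T$, $t\!\cdot\!t$) and using the total antisymmetry of $t$, the norm identity \eqref{tTnorm}, and the expression of $\theta$ as the trace of $T$, the contractions assemble into $|T|^2 - \tfrac92|t|^2 - |\theta|^2$. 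The first-order part is a divergence: tracing $A$ down to a $1$-form yields a multiple of the torsion $1$-form, since $\sum_j g(T_{X e_j},e_j)=\theta_X$ while the $t$-contribution drops out by antisymmetry, so the antisymmetrized pair of derivative terms contributes $-2\delta\theta$. Summing the two parts gives the claimed identity.

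The main obstacle here is the coefficient and sign bookkeeping rather than the conceptual structure. One must keep the two norm conventions straight, distinguishing the $\Omega^2(M;TM)$-norm of $T$ from the $\otimes^3 T^*M$-norm of Gauduchon (the point flagged in the margin of the statement), and one must correctly extract both the factor $2$ and the sign of the divergence term $-2\delta\theta$, which differs from the $+\tfrac12\delta\theta$ occurring in Proposition~\ref{prop:ScalarComparision}; similarly the $t\!\cdot\!T$ cross-term is what turns the naive coefficient of $|t|^2$ into $\tfrac92$. A useful consistency check is that adding twice Proposition~\ref{prop:ScalarComparision} to the present identity yields a formula for $s^H-s^g=2(s^C-s)+(2s-s^g)$, which can be compared against Proposition~\ref{sHsG}; any arithmetic slip in the traces will manifest as a failure of this relation.
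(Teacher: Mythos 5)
Your proposal is correct and is essentially the paper's own proof: both arguments rest on the difference tensor of \eqref{Eqn:ChernLeviCivita} evaluated in a normal frame at a point, identify the first-order (divergence) terms with $-2\delta\theta$ via the trace $\theta_X=\sum_j g(T_{Xe_j},e_j)$ (the $t$-part dropping out by total antisymmetry), and reduce the quadratic terms using \eqref{tTnorm} and $|\theta|^2=T_{iki}T_{jkj}$ --- and your intermediate claims (quadratic part $=|T|^2-\tfrac92|t|^2-|\theta|^2$, derivative part $=-2\delta\theta$, and the consistency check against Corollary~\ref{sHsG}) all check out. The only difference is organizational: you package the computation through the tensorial comparison formula for $R^\nabla-R^{D^g}$ (valid here precisely because $D^g$ is torsion-free, as you note) and then take the double trace, whereas the paper traces first, computing $s^g$ directly in Levi--Civita normal coordinates and recognizing $2s$ plus the same correction terms.
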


\begin{proof}
This is a similar computation, using normal coordinates $e_i$ at $p\in M$. Thus $[e_i,e_j]=0$ and $\left.D^g_{e_i}(e_j)\right|_p=0$. The Riemannian scalar curvature at $p$ is then
\begin{align*}
s^g(p) = e_ig(D^g_{e_j}e_j,e_i) - e_jg(D^g_{e_i}e_j,e_i)
\end{align*}
(we omit all summation signs) which, using
\begin{align}\label{ChernLC1}
\nabla_{e_i}e_j = D^g_{e_i}(e_j) + \left(\frac32t_{ij}^{\enskip\; k} - T_{j\enskip i}^{\,\, k}\right)e_k
\end{align}
from \eqref{Eqn:ChernLeviCivita}, becomes
\begin{align*}
&e_ig(\nabla_{e_j}e_j,e_i) + e_iT_{jij} - e_jg(\nabla_{e_i}e_j,e_i) - e_jT_{jii}\\
=& 2s(p) + g(\nabla_{e_i}e_i, \nabla_{e_j}e_j) - g(\nabla_{e_i}e_j, \nabla_{e_j}e_i)  - 2e_jT_{jii}.
\end{align*}
Now $\delta\theta(p) = - e_jT_{jii}$ and inserting \eqref{ChernLC1} gives
\begin{align*}
s^g(p)-2s(p) &= T_{iki}T_{jkj} + \frac94t_{ijk}^2 + 3t_{ij}^{\;\;k}T_{i\,\,j}^{\,\,k} - T_{jki}T_{ikj} + 2\delta\theta
\end{align*}
Now apply \eqref{tTnorm} and $|\theta|^2 = T_{iki}T_{jkj}$ to get \eqref{ssg}.
\end{proof}

Combining Propositions~\ref{prop:ScalarComparision} and \ref{prop:ScalarComparision1} gives (recall $s^H=2s^C$):
\begin{cor}\label{sHsG}
$s^H - s^g = -\delta\theta - \frac{27}{2}|t|^2 + 2|T|^2$.
\end{cor}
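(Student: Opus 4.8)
The plan is to eliminate the third scalar curvature $s$ between the two preceding propositions. Since by definition $s^H = 2s^C$, the quantity I want is precisely $2s^C - s^g$, and the two propositions record $s^C - s$ and $2s - s^g$ respectively; hence a single linear combination in which the $s$-terms cancel will produce the corollary directly.

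Concretely, I would first double the identity of Proposition~\ref{prop:ScalarComparision} to obtain
\begin{align*}
2s^C - 2s = |\theta|^2 + \delta\theta - 9|t|^2 + |T|^2,
\end{align*}
and then add to it the identity $2s - s^g = |T|^2 - \tfrac92|t|^2 - 2\delta\theta - |\theta|^2$ of Proposition~\ref{prop:ScalarComparision1}. The $\pm 2s$ cancel, leaving $2s^C - s^g$ on the left-hand side, and on the right I collect the four types of terms: the contributions $+|\theta|^2$ and $-|\theta|^2$ cancel entirely; the codifferential terms combine as $\delta\theta - 2\delta\theta = -\delta\theta$; the $|t|^2$ terms give $-9|t|^2 - \tfrac92|t|^2 = -\tfrac{27}{2}|t|^2$; and the $|T|^2$ terms give $|T|^2 + |T|^2 = 2|T|^2$. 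In view of $s^H = 2s^C$ this is exactly the asserted formula.

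There is no substantive obstacle here — the result is pure bookkeeping — but two points deserve attention as sanity checks. First, the exact cancellation of the $|\theta|^2$ terms is the one nontrivial consistency test, and its failure would flag a sign error in one of the two propositions. Second, and more importantly, adding the two $|T|^2$ terms is only legitimate because both propositions employ the \emph{same} norm convention for the torsion, namely the $\Omega^2(M;TM)$-norm of \eqref{norms} rather than the $\otimes^3 T^*M$-norm; the marginal remarks accompanying both statements confirm this, so no conversion factor intervenes and the coefficients may be added directly.
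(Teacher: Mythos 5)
Your proof is correct and is exactly the paper's argument: the corollary is obtained by doubling Proposition~\ref{prop:ScalarComparision}, adding Proposition~\ref{prop:ScalarComparision1} so the $s$-terms and $|\theta|^2$-terms cancel, and using $s^H=2s^C$. Your remark about the shared $\Omega^2(M;TM)$-norm convention for $|T|^2$ is also consistent with the paper's marginal notes, so nothing further is needed.
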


\begin{rem}\label{rem:intCaseScalar}
When $J$ is integrable, we have $9|t|^2=|T|^2$. In this case \eqref{ssg} reduces to \textup{\cite[(32)]{MR742896}} and Proposition~\ref{prop:ScalarComparision} specializes to \cite[Corollaire~2]{MR742896}:
\begin{align*}
2s - s^g &= \frac12|dF|^2 - 2\delta\theta - |\theta|^2\\ 
s^H - 2s &= |\theta|^2 + \delta\theta\\
s^H - s^g &= \frac12|dF|^2 - \delta\theta
\end{align*}
(in this case $dF$ is of type $(2,1)+(1,2)$ and so $|(d^cF)^{2,0}|_{\Omega^2}^2 = |dF|_{\Omega^3}^2$ by Lemma~\ref{lem:comparisionNorms}.)
\end{rem}

\section{Integrability Theorems}\label{sec:VanishingTheorems}

In the almost K\"ahler case, $dF=0$, Propositions~\ref{prop:ScalarComparision} and \ref{prop:ScalarComparision1} immediate imply vanishing theorems. For in this case, $T=N$, $\theta=0$, and $t=0$ from \eqref{eqn:torsion}, \eqref{thetaDef}, and \eqref{tdcF}, respectively. The formulas then reduce to
\begin{align}\label{almostKaehlerScalar}
2s-s^g&=|N|^2,
& s^H - 2s = |N|^2.
\end{align}
\begin{cor}\cite{apostolov1999almost}\label{cor:VanishAK}
On an almost K\"ahler manifold we have $s^g \leq 2s \leq s^H$ with either equality precisely when $(J,g,F)$ is K\"ahler.
\end{cor}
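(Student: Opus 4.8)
The plan is to read both inequalities off directly from the two identities collected in \eqref{almostKaehlerScalar}. Since $2s-s^g = |N|^2$ and $s^H-2s = |N|^2$ are each manifestly nonnegative, the chain $s^g \le 2s \le s^H$ is immediate, with the middle quantity $2s$ sitting exactly halfway between the other two. This is the entire analytic content of the statement, and it has already been supplied by Propositions~\ref{prop:ScalarComparision} and~\ref{prop:ScalarComparision1} specialized to the almost K\"ahler case $dF=0$.

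For the equality clause I would argue as follows. If either $s^g = 2s$ or $2s = s^H$, then the corresponding identity in \eqref{almostKaehlerScalar} forces $|N|^2 = 0$, so the Nijenhuis tensor $N$ vanishes identically. It then remains to see that $N=0$ in the almost K\"ahler setting is equivalent to being K\"ahler. Here I would invoke the torsion decomposition \eqref{eqn:torsion}: the hypothesis $dF=0$ kills the $(2,0)$-part $(d^cF)^{2,0}$, while $T^{1,1}=0$ holds for the Chern connection in general, so the full torsion reduces to $T = T^{0,2} = N$. Hence $N=0$ is the same as the vanishing of the entire Chern torsion $T=0$, and Remark~\ref{kaehlercase} then gives $\nabla = D^g$ with $J$ parallel for the Levi-Civita connection, i.e.\ the structure is K\"ahler. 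The converse is trivial, since a K\"ahler structure has $N=0$.

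I do not expect any genuine obstacle, as all the work is packaged in the comparison formulas. The single point requiring a moment's care is the identification $T=N$ in the almost K\"ahler case: this is precisely what lets the vanishing of $|N|^2$ propagate to the vanishing of the whole Chern torsion, so that Remark~\ref{kaehlercase} can be applied to conclude the K\"ahler property rather than merely the integrability of $J$.
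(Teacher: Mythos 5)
Your proposal is correct and follows essentially the same route as the paper: both read the inequalities off the specialization \eqref{almostKaehlerScalar} of Propositions~\ref{prop:ScalarComparision} and~\ref{prop:ScalarComparision1} (using $\theta=0$, $t=0$, $T=N$ when $dF=0$), and both settle the equality case by noting that $N=0$ forces the full Chern torsion to vanish, so Remark~\ref{kaehlercase} gives the K\"ahler property. The point you flag as needing care, namely $T=N$ in the almost K\"ahler case via \eqref{eqn:torsion}, is exactly the observation the paper makes before stating the corollary.
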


With some care in dimension four, these conclusions can be extended.
Thus, assuming equality of various scalar curvatures on an almost Hermitian manifold will guarantee both the
integrability af $J$ and the K\"ahler condition $dF=0$.

\begin{thm}\label{THM:vanishing}
Let $(M,J,g,F)$ be a closed almost Hermitian $4=2n$-manifold.
\begin{enumerate}
\item
$\displaystyle\int_M (2s-s^g + |\theta|^2)\frac{F^n}{n!} \geq 0$.
\item
$\displaystyle\int_M (s^H - s^g)\frac{F^n}{n!} \geq 0$.
\item
$\displaystyle\int_M (s^C - s)\frac{F^n}{n!} \geq 0$.
\end{enumerate}
In any case, equality holds if and only if the structure is K\"ahler.
\end{thm}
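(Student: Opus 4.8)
The plan is to integrate the three pointwise identities of Propositions~\ref{prop:ScalarComparision}, \ref{prop:ScalarComparision1} and Corollary~\ref{sHsG} over the closed manifold $M$. Since $\int_M \delta^g\theta\,\frac{F^n}{n!}=0$ for any $1$-form on a closed manifold, all divergence terms disappear, and the three integrands in (i)--(iii) become, respectively,
\[
|T|^2-\tfrac92|t|^2,\qquad 2|T|^2-\tfrac{27}{2}|t|^2,\qquad \tfrac12|\theta|^2-\tfrac92|t|^2+\tfrac12|T|^2.
\]
Everything thus reduces to expressing $|T|^2$ in terms of $|N|^2$ and $|t|^2$, and this is the only place where dimension four is genuinely used.

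The crucial algebraic input I would establish is the identity $|T|^2=|N|^2+9|t|^2$, valid when $n=2$. Recall from~\eqref{eqn:torsion} that $T=T^{0,2}+T^{2,0}=N+(d^cF)^{2,0}$, and that these summands are orthogonal since they have distinct $TM$-types. In real dimension four $\Lambda^{3,0}=0$, so the real $3$-form $dF$—and hence $d^cF$—has pure real type $(2,1)+(1,2)$; by Lemma~\ref{lem:realTMtype} the associated $TM$-valued form $i(d^cF)$ then has only $TM$-types $(2,0)$ and $(1,1)$. I would now combine the $n=2$ case of Lemma~\ref{lem:comparisionNorms}, i.e.\ $|i(d^cF)^{1,1}|^2=2\,|i(d^cF)^{2,0}|^2$, with the norm conversion $|i(d^cF)|^2_{\Omega^2(M;TM)}=3\,|d^cF|^2_{\Omega^3}$. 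These give $|i(d^cF)|^2=3\,|i(d^cF)^{2,0}|^2$, hence $|T^{2,0}|^2=|i(d^cF)^{2,0}|^2=|d^cF|^2_{\Omega^3}=9|t|^2$ using $t=\tfrac13 d^cF$. Adding $|T^{0,2}|^2=|N|^2$ yields the identity.

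Substituting $|T|^2=|N|^2+9|t|^2$ collapses the three integrands to manifestly nonnegative quantities: $|N|^2+\tfrac92|t|^2$ for (i), $2|N|^2+\tfrac92|t|^2$ for (ii), and $\tfrac12|N|^2+\tfrac12|\theta|^2$ for (iii), where in the last one the $|t|^2$-terms cancel exactly. All three inequalities follow at once.

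The remaining task, and the step I expect to be the main obstacle, is the equality analysis—especially for (iii). For (i) and (ii), equality forces $N\equiv0$ and $t\equiv0$, hence $T\equiv0$, so the structure is K\"ahler by Remark~\ref{kaehlercase}. For (iii), however, equality only yields $N\equiv0$ and $\theta\equiv0$, and the subtle point is to upgrade this to the K\"ahler condition, since \emph{balanced} plus \emph{integrable} need not be K\"ahler in general. Here I would exploit that in real dimension four the fundamental form is self-dual, $*F=F$, so that $\theta=J\delta F=0$ is equivalent to $\delta F=-*dF=0$, i.e.\ to $dF=0$. Thus $\theta\equiv0$ already forces the almost K\"ahler condition, and together with integrability $N\equiv0$ this is exactly K\"ahler. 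The two genuinely four-dimensional ingredients are therefore the norm identity $|T|^2=|N|^2+9|t|^2$ and this self-duality argument; all the rest is integration by parts.
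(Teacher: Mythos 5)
Your proposal is correct and follows essentially the same route as the paper's proof: integrate the identities of Propositions~\ref{prop:ScalarComparision}, \ref{prop:ScalarComparision1} and Corollary~\ref{sHsG} so the $\delta^g\theta$ terms drop, then use Lemma~\ref{lem:realTMtype}, Lemma~\ref{lem:comparisionNorms} and the norm conversion between $\Omega^3(M)$ and $\Omega^2(M;TM)$ to rewrite the torsion terms as manifestly nonnegative quantities --- your identity $|T|^2=|N|^2+9|t|^2$ is exactly a repackaging of the paper's equation~\eqref{better4}, and your three reduced integrands agree with the paper's. The only point where you go beyond the paper's terse write-up is the equality case of (iii), where one gets $N=0$ and $\theta=0$ rather than $T=0$ directly; your observation that in dimension four self-duality of $F$ makes $\theta=0$ equivalent to $dF=0$ correctly supplies this step, which the paper leaves implicit.
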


\begin{proof}
Recall from \eqref{tdcF} and \eqref{eqn:torsion} that
\[
	T=N+(d^cF)^{2,0},\qquad
	t = \frac13 d^cF.
\]
Since we are in dimension four, $t=t^{2,0}+t^{1,1}$ by Lemma~\ref{lem:realTMtype}. 
Also for the $\Omega^2(M;TM)$-norm defined in \eqref{norms}, Lemma~\ref{lem:comparisionNorms} gives
\begin{equation}\label{tNorms}
	|t|^2_{\Omega^2}=|t^{2,0}|^2 + |t^{1,1}|^2 = 3|t^{2,0}|^2 = \frac13|(d^cF)^{2,0}|^2.
\end{equation}
Combined with $|t|^2_{\Omega^3} = \frac13|t|^2_{\Omega^2(M;TM)}$ we get
\begin{equation}\label{better4}
	|T|_{\Omega^2}^2 - \frac92|t|_{\Omega^3}^2 = |N|^2 + \frac12|(d^cF)^{2,0}|^2.
\end{equation}
Putting this into Proposition~\ref{prop:ScalarComparision1} and integrating gives
\[
\int_M (2s-s^g + |\theta|^2) = \int_M \left(|N|^2 + \frac12|(d^cF)^{2,0}|^2 \right) \frac{F^n}{n!} \geq 0
\]
where we use that the integral over $\delta\theta$ vanishes (since $\frac{F^n}{n!}$ is the Riemannian volume form). Of course, the left hand side can only vanish when $N=0$ and $(d^cF)^{2,0}=0$. Then $T=N+(d^cF)^{2,0} = 0$ so by Remark~\ref{kaehlercase} we are in the K\"ahler case.
Part ii) is a similar application of Corollary~\ref{sHsG}, while iii) uses Proposition~\ref{prop:ScalarComparision}.
\end{proof}

\begin{rem}
When $M$ is a closed Hermitian manifold (the integrable case), one can
deduce Theorem~\ref{THM:vanishing} in any dimension (see~\cite{MR742896, Liu-Yang} or apply the technique above to Remark~\ref{rem:intCaseScalar}). On the other hand, Dabkowski--Lock~\cite{Lock-Dab} have examples of \emph{non-compact} Hermitian with $s^H=s^g$ which are not K\"ahler. Do higher-dimensional closed almost Hermitian non-K\"ahler manifolds with $s^H=s^g$ exist? 
\end{rem}

In the conformally almost K\"ahler case, we will extend ii)~to higher dimensions in Theorem~\ref{ineq_chern_riem_con_AK} below. We now proceed by proving an `opposite' of Corollary~\ref{cor:VanishAK} for nearly K\"ahler structures.

\begin{defn}
An almost Hermitian manifold $(J,g,F)$ is {\it nearly K\"ahler} if
\[
\left(D^g_XJ\right)Y+\left(D^g_YJ\right)X=0
\]
where $D^g$ is the Levi-Civita connection~\cite{gray1966some,gray1970nearly}.
\end{defn}

It follows from the definition that $D^gF=\frac{1}{3}dF$. Moreover, $dF$ is of type $(3,0)+(0,3)$
and $N=\frac{1}{3}d^cF$ is totally anti-symmetric. In particular, a nearly K\"ahler manifold is balanced.
Furthermore, a nearly K\"ahler manifold of dimension $2n=4$ is K\"ahler. Also, if the nearly K\"ahler manifold is
Hermitian then it is K\"ahler. Examples of nearly K\"ahler manifolds
are $S^6$ with its standard almost-complex structure and metric, $S^3\times S^3$
equipped with the bi-invariant almost complex structure and its 3-symmetric almost Hermitian structure
and the twistor spaces over Einstein self-dual 4-manifolds, endowed with the anti-tautological almost complex structure (for more details about nearly K\"ahler manifolds see~\cite{nagy2002nearly,butruille2005classification,moroianu2005unit,verbitsky2011hodge,butruille2010homogeneous}).
We deduce from Propositions~\ref{prop:ScalarComparision} and \ref{prop:ScalarComparision1} 
that for any nearly K\"ahler manifold
\begin{equation*}
2s-s^g = -\frac16|d^cF|^2_{\Omega^3},\qquad s^H-2s = -\frac23|d^cF|^2_{\Omega^3}.
\end{equation*}


\begin{cor}\label{nearlyKaehlerVanish}
On a nearly K\"ahler manifold we have $s^H \leq 2s \leq s^g$ with either equality precisely when $(J,g,F)$ is K\"ahler.
\end{cor}

\begin{rem}
This does not contradict Theorem~\ref{THM:vanishing} because in dimension $4$ the notions K\"ahler and nearly K\"ahler agree.
\end{rem}

A feature of nearly K\"ahler manifolds is that $dF$ is of constant norm~\cite{kirichenko1977k,butruille2005varietes}. 
Moreover, Gray proved~\cite[Theorem 5.2]{gray1976structure} that any non-K\"ahler nearly K\"ahler manifold of dimension $6$
is Einstein of positive (constant) $s^g$. Furthermore, their first Chern class vanishes. Hence, the Hermitian scalar curvature $s^H$ of any closed non-K\"ahler nearly K\"ahler manifold of dimension $6$ vanishes.

\begin{cor}\label{curious-cor}
On a closed non-K\"ahler, nearly K\"ahler manifold $(M,J,g)$ of dimension $6$, we have $s^H=0$.
\end{cor}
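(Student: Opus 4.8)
The plan is to show that under these hypotheses $s^H$ is a \emph{constant} function whose integral over $M$ vanishes, forcing it to vanish identically.

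First I would invoke the structure theory recalled just before the statement: by Gray \cite{gray1976structure}, a closed strict (non-K\"ahler) nearly K\"ahler $6$-manifold is Einstein with constant positive Riemannian scalar curvature $s^g$, and its first Chern class vanishes, $c_1(M)=0$. I take both of these as given inputs.

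Next I combine the two identities derived for nearly K\"ahler manifolds just before Corollary~\ref{nearlyKaehlerVanish},
\[
2s-s^g = -\frac16|d^cF|^2_{\Omega^3}, \qquad s^H-2s = -\frac23|d^cF|^2_{\Omega^3},
\]
into $s^H = s^g - \frac56|d^cF|^2_{\Omega^3}$. On any nearly K\"ahler manifold $dF$ has constant pointwise norm \cite{kirichenko1977k,butruille2005varietes}, and since $J$ is orthogonal and acts isometrically on forms the same holds for $d^cF$; hence $|d^cF|^2_{\Omega^3}$ is constant. Together with the constancy of $s^g$ this shows $s^H$ is constant.

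Finally I would show the total Hermitian scalar curvature vanishes. A nearly K\"ahler manifold is balanced ($\theta=0$), so $\delta^g F=0$ and the power $F^{n-1}$ is closed. Since $c_1(M)=0$, the first Ricci form $\rho$, whose class is $2\pi c_1(TM,J)$, is exact, say $\rho=d\beta$. Using $s^H=2s^C$ and $s^C=\Lambda(\rho)$,
\[
\int_M s^H\,\frac{F^n}{n!} = 2\int_M \rho\wedge\frac{F^{n-1}}{(n-1)!} = 2\int_M d\!\left(\beta\wedge\frac{F^{n-1}}{(n-1)!}\right) = 0
\]
by Stokes, using $d(F^{n-1})=0$. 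A constant function with vanishing integral over a closed manifold of positive volume is identically zero, so $s^H\equiv 0$. The substantive input is Gray's Einstein/Chern-class theorem, which I assume; the only real subtlety in the elementary part is recognizing that it is precisely the interplay between the Einstein condition (making $s^g$, hence $s^H$, constant) and the vanishing first Chern class (making the total $s^H$ zero) that upgrades the global integral identity to the desired pointwise vanishing.
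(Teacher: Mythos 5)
Your proof is correct and follows essentially the same route as the paper: Gray's theorem (Einstein with constant positive $s^g$, vanishing $c_1$) plus the constancy of $|dF|$ and the two nearly K\"ahler identities give constancy of $s^H$, and $c_1(M)=0$ forces the total Hermitian scalar curvature to vanish. The only difference is that you spell out the Stokes/balanced argument ($\rho$ exact, $d(F^{n-1})=0$) behind the paper's remark that $c_1(M)=0$ kills the integral of $s^H$, which the paper leaves implicit.
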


\section{Conformal Variations}\label{sec:ConformalChanges}

Let $\tilde{g} = e^{2f}g$ be a conformal variation of the metric along a smooth real-valued function $f$. Then $(M,J,\tilde{g},\tilde{F})$ is again an almost Hermitian manifold and we shall be interested in how the associated Chern connection, Ricci forms, and scalar curvatures behave under this variation.

We begin by deriving an alternative expression for the Chern connection:

\begin{lem}\label{Lem:ChernConnection}
The Chern connection is given by
\begin{align}\label{Eqn:ChernConnection}
h(W,\nabla_X Z) &= X^{0,1}h(W,Z) + h(W,[X^{0,1},Z]) + h([W,X^{0,1}],Z),
\end{align}
where $X\in TM$ and $W,Z \in C^\infty(M,T^{1,0})$.
\end{lem}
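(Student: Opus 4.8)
The plan is to verify the formula directly from the two defining properties of the Chern connection, exploiting two algebraic facts about $h$: first that it is induced from the $\C$-bilinear extension $g_\C$ of $g$ via $h(A,B)=g_\C(A,\overline{B})$, and second that the type decomposition is $h$-orthogonal, so that $h(T^{1,0},T^{0,1})=0$. Because of the latter, $h$ annihilates the ``wrong-type'' component in either slot, so since $W,Z\in C^\infty(M,T^{1,0})$ I would first replace the two bracket terms on the right by their $(1,0)$-projections. By the definition \eqref{Eqn:CauchyRiemann} of the canonical Cauchy--Riemann operator these projections are exactly the $(0,1)$-directional covariant derivatives: $h(W,[X^{0,1},Z]) = h(W,[X^{0,1},Z]^{1,0}) = h(W,\nabla_{X^{0,1}}Z)$, and using $[W,X^{0,1}]^{1,0}=-[X^{0,1},W]^{1,0}$ also $h([W,X^{0,1}],Z) = -h(\nabla_{X^{0,1}}W,Z)$.

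It then remains to handle the function-derivative term $X^{0,1}h(W,Z)$. Here I would \emph{not} use metric compatibility in its Hermitian form, but rather the $\C$-bilinear parallelism $\nabla g_\C=0$, which is tensorial and hence valid in every complex direction. Writing $h(W,Z)=g_\C(W,\overline{Z})$ and differentiating gives $X^{0,1}h(W,Z) = g_\C(\nabla_{X^{0,1}}W,\overline{Z}) + g_\C(W,\nabla_{X^{0,1}}\overline{Z})$. The crucial input is the conjugation rule $\nabla_{X^{0,1}}\overline{Z}=\overline{\nabla_{X^{1,0}}Z}$, valid because $\nabla$ is a real connection; it cross-links the $(0,1)$- and $(1,0)$-directional derivatives. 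Re-expressing both terms through $h$ yields the mixed compatibility identity
\[
X^{0,1}h(W,Z) = h(\nabla_{X^{0,1}}W,Z) + h(W,\nabla_{X^{1,0}}Z).
\]

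Substituting this and the two bracket identities into the right-hand side, the terms $h(\nabla_{X^{0,1}}W,Z)$ cancel, leaving $h(W,\nabla_{X^{1,0}}Z)+h(W,\nabla_{X^{0,1}}Z)=h(W,\nabla_X Z)$, which is the claim. The main obstacle is precisely the bookkeeping of conjugations: one must resist applying the Hermitian compatibility identity directly in the complex direction $X^{0,1}$ (which is only $\C$-linear in the differentiation slot for \emph{real} directions and would produce a spurious factor), and instead route the computation through $g_\C$ and the reality of $\nabla$. As a consistency check, the right-hand side is tensorial and $\C$-linear in $W$, so that, $h$ being nondegenerate, it legitimately determines the element $\nabla_X Z\in T^{1,0}$. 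Alternatively, one could define $\tilde\nabla$ by the formula and verify that it is a Hermitian connection whose $(0,1)$-part is $\bar\partial$, concluding $\tilde\nabla=\nabla$ by the uniqueness built into the definition of the Chern connection.
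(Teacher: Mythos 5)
Your proposal is correct, but your main argument runs in the opposite logical direction from the paper's proof. The paper's proof is exactly your closing ``alternative'': one checks that the right-hand side of \eqref{Eqn:ChernConnection} defines a Hermitian connection whose $(0,1)$-part is \eqref{Eqn:CauchyRiemann} (the non-tensorial terms in $W$ cancel, so the formula does determine an element of $T^{1,0}$, and the Leibniz and compatibility properties are routine), and then invokes the uniqueness clause in the definition of the Chern connection. You instead start from the Chern connection's defining properties and \emph{derive} the formula: the type-orthogonality $h(T^{1,0},T^{0,1})=0$ converts the two bracket terms into $h(W,\nabla_{X^{0,1}}Z)$ and $-h(\nabla_{X^{0,1}}W,Z)$, and the mixed compatibility identity $X^{0,1}h(W,Z)=h(\nabla_{X^{0,1}}W,Z)+h(W,\nabla_{X^{1,0}}Z)$ — correctly obtained by routing through $g_\C$ and the reality of $\nabla$, rather than naively extending Hermitian compatibility to complex directions — produces the cancellation. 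Both routes are legitimate given that the definition presupposes existence and uniqueness of the Chern connection, but they lean on different halves of that presupposition: your derivation shows the formula is \emph{forced} (in effect a proof of uniqueness of a Hermitian connection with prescribed $(0,1)$-part, assuming one exists), whereas the paper's verification shows the formula \emph{works} (in effect a constructive proof of existence, concluding by uniqueness). Your version has the merit of making explicit where each axiom enters ($\nabla J=0$ for type preservation, $\nabla g_\C=0$ and reality for the mixed identity, \eqref{Eqn:CauchyRiemann} for the bracket terms); the paper's version is shorter and doubles as a construction.
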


\begin{proof}
\eqref{Eqn:ChernConnection} is easily seen to define a Hermitian connection whose $(0,1)$-part is given by \eqref{Eqn:CauchyRiemann}. The result then follows from the uniqueness of such a connection.
\end{proof}

\begin{lem}\label{Lem:ConformalChern}
$\tilde{\nabla}_X Z = \nabla_X Z + 2X^{1,0}(f)\cdot Z$ for all $X\in TM, Z \in T^{1,0}$.
\end{lem}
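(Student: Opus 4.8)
The plan is to compute the Chern connection of the rescaled metric directly from the explicit formula in Lemma~\ref{Lem:ChernConnection}, exploiting that the Hermitian form simply rescales, $\tilde{h}=e^{2f}h$, while the almost complex structure, and hence the type decomposition $TM\otimes\mathbb{C}=T^{1,0}\oplus T^{0,1}$, is unchanged. In particular the Cauchy--Riemann operator \eqref{Eqn:CauchyRiemann} and all the Lie brackets entering \eqref{Eqn:ChernConnection} are conformally invariant, so only the rescaling of $h$ contributes anything new.

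First I would apply \eqref{Eqn:ChernConnection} with $\tilde{h}$ in place of $h$, obtaining, for sections $W,Z\in C^\infty(M,T^{1,0})$,
\[
\tilde{h}(W,\tilde{\nabla}_X Z) = X^{0,1}\tilde{h}(W,Z) + \tilde{h}(W,[X^{0,1},Z]) + \tilde{h}([W,X^{0,1}],Z).
\]
Substituting $\tilde{h}=e^{2f}h$ and applying the Leibniz rule to the first term produces a factor $e^{2f}$ multiplying each of the three terms of \eqref{Eqn:ChernConnection} for the unscaled metric, together with one extra term $2X^{0,1}(f)\,e^{2f}h(W,Z)$ coming from differentiating $e^{2f}$ along $X^{0,1}$. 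By \eqref{Eqn:ChernConnection} the bracketed combination is exactly $h(W,\nabla_X Z)$, so the first three terms collapse to $e^{2f}h(W,\nabla_X Z)=\tilde{h}(W,\nabla_X Z)$.

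It then remains to recognize the leftover term $2X^{0,1}(f)\,e^{2f}h(W,Z)$ as $\tilde{h}(W,2X^{1,0}(f)\,Z)$. This is the one genuinely subtle point: since $h$ is anti-linear in its second argument, $\tilde{h}(W,2X^{1,0}(f)Z)=\overline{2X^{1,0}(f)}\,\tilde{h}(W,Z)$, and because $X$ and $f$ are real we have $\overline{X^{1,0}}=X^{0,1}$ and hence $\overline{X^{1,0}(f)}=X^{0,1}(f)$, which matches exactly. Collecting terms gives $\tilde{h}(W,\tilde{\nabla}_X Z)=\tilde{h}(W,\nabla_X Z+2X^{1,0}(f)Z)$ for all $W\in T^{1,0}$, and non-degeneracy of $\tilde{h}$ on $T^{1,0}$ yields the claimed formula.

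I expect the main obstacle to be purely bookkeeping: tracking the conjugation forced by the anti-linearity of $h$ in its second slot, which is precisely what converts the $(0,1)$-derivative of $f$ arising naturally from \eqref{Eqn:ChernConnection} into the $(1,0)$-derivative $X^{1,0}(f)$ of the statement. As an alternative route one could avoid \eqref{Eqn:ChernConnection} and instead verify directly that $\nabla+2X^{1,0}(f)\cdot$ is Hermitian for $\tilde{g}$ with unchanged $(0,1)$-part equal to the Cauchy--Riemann operator \eqref{Eqn:CauchyRiemann}, then invoke the uniqueness in the definition of the Chern connection; but the formula-based computation above is the more direct.
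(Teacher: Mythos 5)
Your proof is correct and is essentially the paper's own argument: the paper proves Lemma~\ref{Lem:ConformalChern} simply by declaring it an immediate consequence of Lemma~\ref{Lem:ChernConnection}, which is exactly the computation you carry out. Your write-up merely makes explicit the two details left implicit there, namely the Leibniz term $2X^{0,1}(f)\,e^{2f}h(W,Z)$ and the conjugation $\overline{X^{1,0}(f)}=X^{0,1}(f)$ forced by the anti-linearity of $h$ in its second slot, both of which you handle correctly.
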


\begin{proof}
This is an immediate consequence of Lemma~\ref{Lem:ChernConnection}.
\end{proof}

\begin{prop}\label{Prop:ChernCurvature} For the curvature tensors of the Chern connection we have
\begin{align}
\tilde{R}^{\tilde{\nabla}}(Z) = R^\nabla(Z) + i dd^cf \cdot Z.
\end{align}
\end{prop}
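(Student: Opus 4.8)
The plan is to compute the curvature $\tilde R^{\tilde\nabla}_{XY}Z = [\tilde\nabla_X,\tilde\nabla_Y]Z - \tilde\nabla_{[X,Y]}Z$ directly from the conformal change formula in Lemma~\ref{Lem:ConformalChern}, namely $\tilde\nabla_X Z = \nabla_X Z + 2X^{1,0}(f)\,Z$. Writing $\eta(X) = 2X^{1,0}(f)$, this is a connection of the form $\tilde\nabla = \nabla + \eta\otimes \id$, where $\eta$ is a (complex-valued) $1$-form. For any such perturbation of a connection by a scalar $1$-form the curvature changes additively by $d\eta$, so the endomorphism-valued $2$-form satisfies $\tilde R^{\tilde\nabla}_{XY} = R^\nabla_{XY} + (d\eta)(X,Y)\cdot\id$. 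Thus everything reduces to identifying the scalar $2$-form $d\eta$ where $\eta = 2\,\partial f$ (the $(1,0)$-part of $2df$, using that $X^{1,0}(f) = \partial f(X)$).

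First I would verify the additive-curvature step cleanly. For a connection twisted by a $1$-form valued in the scalars, one has $\tilde\nabla_X\tilde\nabla_Y Z = \nabla_X\nabla_Y Z + \eta(Y)\nabla_X Z + X(\eta(Y))Z + \eta(X)\nabla_Y Z + \eta(X)\eta(Y)Z$; antisymmetrizing in $X,Y$ and subtracting the $[X,Y]$ term, the $\eta(X)\eta(Y)$ and mixed $\eta(Y)\nabla_X Z$ contributions cancel or combine, leaving precisely $R^\nabla_{XY}Z + \big(X(\eta(Y)) - Y(\eta(X)) - \eta([X,Y])\big)Z = R^\nabla_{XY}Z + d\eta(X,Y)\,Z$. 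This is the standard computation and carries over verbatim with $\eta$ complex-valued. Next I would compute $d\eta = d(2\partial f) = 2\,d\partial f$. Here the point is that $d = \partial + \bar\partial$ on functions, so $df = \partial f + \bar\partial f$ and $\eta = 2\partial f$, hence $d\eta = 2\partial\partial f + 2\bar\partial\partial f$. I would then relate this to $i\,dd^c f$ using the definition $d^c f = J\,df\,J^{-1}$ from Section~\ref{ComplexCoordinates}: on a function $d^c f = i(\bar\partial f - \partial f)$, so that $dd^c f = i\,d(\bar\partial - \partial)f$ and therefore $i\,dd^c f = -d(\bar\partial-\partial)f = (\partial-\bar\partial)(\partial+\bar\partial)f$, and matching this against $d\eta$ identifies the coefficient, giving $\tilde R^{\tilde\nabla}_{XY}Z = R^\nabla_{XY}Z + i\,dd^c f\,(X,Y)\cdot Z$ as claimed.

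The main subtlety, and where I would be most careful, is the bookkeeping of $\partial/\bar\partial$ on functions in the almost complex (non-integrable) setting: one must check that for a \emph{function} $f$ the formula $d^c f = i(\bar\partial f - \partial f)$ still holds exactly, even though $d^2 \ne 0$ on forms and $\partial^2 \ne 0$ in general. On functions this is fine because $df$ decomposes into its $(1,0)$ and $(0,1)$ parts with no Nijenhuis correction, so the definition $X^{1,0}(f) = (\partial f)(X)$ is unambiguous and the only possible source of sign error is the convention for $J$ acting on forms. I would therefore pin down the sign of $d^c$ on functions once, reconcile it with the twisted differential convention fixed in the excerpt, and confirm that the factor $i$ emerges correctly; after that the identity is forced and the proof is complete.
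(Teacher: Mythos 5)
Your proposal is correct and takes essentially the same route as the paper: the paper likewise computes $\tilde R^{\tilde\nabla}$ directly from Lemma~\ref{Lem:ConformalChern}, obtaining the additive term $2\left(X(Y^{1,0}f)-Y(X^{1,0}f)-[X,Y]^{1,0}f\right)\cdot Z = d\eta(X,Y)\,Z$ with $\eta=2\partial f$, which is then identified with $i\,dd^cf(X,Y)\,Z$. One tidy-up for your write-up: avoid expanding $d$ as $\partial+\bar\partial$ on $1$-forms (this fails in the non-integrable case, where $d\partial f$ also has a $(0,2)$-component coming from the Nijenhuis tensor); the identification needs no type decomposition at all, since $i\,dd^cf = d(\partial f-\bar\partial f) = d(2\partial f - df) = d\eta$ using only $d^2f=0$.
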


\begin{proof}
Beginning with Lemma~\ref{Lem:ConformalChern} a straightforward calculation gives
\[
\tilde{R}^{\tilde\nabla}_{XY}Z = R^\nabla_{XY}Z + 2\left( X(Y^{1,0}f)-Y(X^{1,0}f)-[X,Y]^{1,0}f\right)\cdot Z.\qedhere
\]
\end{proof}

\begin{cor}\label{RicciVar}
The conformal variations of the three Ricci forms are given by
\begin{align}
\tilde{\rho} &= \rho - n\cdot dd^cf,\label{Eqn:FirstRicciConformal}\\
\tilde{r} &= r - \Lambda(d d^cf)\cdot F,\label{Eqn:SecondRicciConformal}\\
\tilde{\sigma} &= \sigma - dd^cf.\label{Eqn:ThirdRicciConformal}
\end{align}
\end{cor}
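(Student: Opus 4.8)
The plan is to derive all three conformal variation formulas directly from Proposition~\ref{Prop:ChernCurvature}, which tells us that the curvature tensor changes by $\tilde{R}^{\tilde{\nabla}}(Z) = R^\nabla(Z) + i\, dd^cf \cdot Z$. Since each Ricci form is defined as a specific trace or Lefschetz trace of $R$ (see~\eqref{firstRicci}, \eqref{secondRicci}, \eqref{thirdRicci}), and these operations are $\C$-linear, the variation of each Ricci form is obtained by applying the very same contraction to the correction term $i\, dd^cf \cdot Z$. First I would observe that this correction term is the curvature endomorphism of the trivial scalar factor: writing $\tilde R_{XY} Z - R_{XY} Z = i\,(dd^cf)_{XY}\, Z$, the endomorphism part is simply multiplication by the scalar $i(dd^cf)_{XY}$, so it acts as $i(dd^cf)_{XY}\cdot\id_{T^{1,0}}$.

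The computation for each form then reduces to tracing $\id_{T^{1,0}}$ in the appropriate slots. For the first Ricci form $\rho_{XY} = \tr_\C(J\circ R_{XY})$, I would note that $J$ acts as $+i$ on $T^{1,0}$, so the correction contributes $\tr_\C\big(J\circ (i(dd^cf)_{XY}\id)\big) = i\cdot i\cdot n\,(dd^cf)_{XY} = -n\,(dd^cf)_{XY}$, using that the complex rank of $T^{1,0}$ is $n$; this gives \eqref{Eqn:FirstRicciConformal}. For the third Ricci form $\sigma$, examining its components in~\eqref{thirdRicci} shows that the relevant contraction picks out a single diagonal entry rather than a full trace, so the correction contributes exactly $(dd^cf)_{XY}$ and hence $\tilde\sigma = \sigma - dd^cf$, giving \eqref{Eqn:ThirdRicciConformal}. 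For the second Ricci form $r_{XY} = -\Lambda(R_{\cdot\,\cdot\,XY})$, the contraction is over the \emph{first} pair of indices while the endomorphism is scalar, so the correction becomes $-\Lambda\big(i(dd^cf)\cdot\id\big)$ evaluated appropriately, which one reorganizes into $-\Lambda(dd^cf)\cdot F$, producing \eqref{Eqn:SecondRicciConformal}.

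The main obstacle, as usual in this index-heavy setting, is bookkeeping: making sure that the \emph{slot} in which each Ricci form performs its contraction is matched correctly against the slot in which the scalar endomorphism acts, and that the factors of $i$ and the sign conventions from~\eqref{Eqn:LefschetzTrace} and the definition of $d^c$ are tracked consistently. The three formulas differ precisely because $\rho$ traces the endomorphism indices of $R$ (yielding the factor $n$), $\sigma$ picks out a partial/single contraction, and $r$ applies the Lefschetz trace $\Lambda$ to the \emph{form} indices while leaving the endomorphism slot summed separately (yielding the proportionality to $F$ rather than to $dd^cf$ itself). I would therefore carry out each case by writing the correction term in the same coordinate form as the defining equations \eqref{firstRicci}--\eqref{thirdRicci}, substituting the scalar $i(dd^cf)$ for the curvature components, and reading off the result; the verification that the three distinct contraction patterns produce the three stated right-hand sides is the real content, with everything else being linearity of the trace.
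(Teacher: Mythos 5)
Your proposal is correct and is essentially the paper's own argument: the paper's proof reads ``Compute the three Ricci forms of the tensor $dd^cf\otimes F$,'' and since the correction endomorphism $i\,dd^cf\cdot\mathrm{id}_{T^{1,0}}$ from Proposition~\ref{Prop:ChernCurvature} is exactly that tensor (as $J|_{T^{1,0}}=i\,\mathrm{id}$, with associated $2$-form $F$), your three trace computations are precisely this. Your identification of why the three answers differ --- full endomorphism trace giving the factor $n$, a Kronecker-delta contraction giving coefficient $1$, and the Lefschetz trace landing on the form indices giving $-\Lambda(dd^cf)\cdot F$ --- matches the intended computation against \eqref{firstRicci}--\eqref{thirdRicci}.
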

\begin{proof}
Compute the three Ricci forms of the tensor $dd^cf\otimes F$.
\end{proof}

\begin{cor}\label{cor:ConformalScalVar}
The conformal variations of the scalar curvatures are\Marginpar{$\tilde{\Lambda} = e^{-2f} \Lambda$}
\begin{align}
e^{2f}\tilde{s}^C &= s^C - n \Lambda(dd^cf),\label{Eqn:Scal}\\
e^{2f}\tilde{s} &= s - \Lambda(dd^cf).\label{Eqn:ThirdScal}
\end{align}
\end{cor}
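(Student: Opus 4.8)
The plan is to read off both identities from Corollary~\ref{RicciVar} together with the defining equations $s^C=\Lambda(\rho)$ and $s=\Lambda(\sigma)$ for the scalar curvatures as Lefschetz traces. The only genuine input is the conformal behaviour of the Lefschetz trace itself: since $\Lambda$ is built from the metric, it changes when we pass from $g$ to $\tilde g=e^{2f}g$, and once this scaling is established the two formulas are a one-line substitution.

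First I would record the relation $\tilde\Lambda=e^{-2f}\Lambda$ noted in the margin. Under $\tilde g=e^{2f}g$ the fundamental form scales as $\tilde F=e^{2f}F$, while the inner product on $(0,2)$-tensors picks up a factor $e^{-4f}$, because raising each of the two indices contributes $\tilde g^{ij}=e^{-2f}g^{ij}$. Hence, using the definition \eqref{Eqn:LefschetzTrace},
\[
\tilde\Lambda(\phi)=\tfrac12\,\tilde h(\phi,\tilde F)=\tfrac12\,e^{-4f}h(\phi,e^{2f}F)=e^{-2f}\cdot\tfrac12\,h(\phi,F)=e^{-2f}\Lambda(\phi)
\]
for every $2$-form $\phi$. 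I should also note that $dd^cf$ is metric-independent, so the term $\Lambda(dd^cf)$ appearing on the right of the formulas in Corollary~\ref{RicciVar} refers to the original $\Lambda$.

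With this in hand the computation is immediate. For the Chern scalar curvature,
\[
\tilde s^C=\tilde\Lambda(\tilde\rho)=e^{-2f}\Lambda\bigl(\rho-n\,dd^cf\bigr)=e^{-2f}\bigl(s^C-n\,\Lambda(dd^cf)\bigr),
\]
which is \eqref{Eqn:Scal}, using the formula for $\tilde\rho$ and the $\C$-linearity of $\Lambda$. The identical argument applied to $\tilde s=\tilde\Lambda(\tilde\sigma)$ and $\tilde\sigma=\sigma-dd^cf$ yields \eqref{Eqn:ThirdScal}.

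The one step requiring care is the scaling law $\tilde\Lambda=e^{-2f}\Lambda$; everything else is linearity and substitution. A useful consistency check runs the Chern computation through the second Ricci form instead: from $\tilde r=r-\Lambda(dd^cf)\,F$ one gets $\tilde\Lambda(\tilde r)=e^{-2f}\bigl(s^C-\Lambda(dd^cf)\,\Lambda(F)\bigr)$, which agrees with \eqref{Eqn:Scal} precisely because $\Lambda(F)=n$ in our normalization.
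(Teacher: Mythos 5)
Your proposal is correct and follows exactly the route the paper intends: the paper leaves this corollary's proof implicit, recording only the key scaling law $\tilde\Lambda = e^{-2f}\Lambda$ in a margin note, and the result is then obtained—just as you do—by applying the rescaled Lefschetz trace to the Ricci form variations of Corollary~\ref{RicciVar}, using that $dd^cf$ depends only on $J$. Your derivation of the scaling law and the consistency check via the second Ricci form (with $\Lambda(F)=n$) are both sound.
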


\begin{lem}
For $f\in C^\infty(M)$ real we have\Marginpar{${e^{2f}\tilde{s}^H = s^H + m\Delta^g(f) + mg(\theta,df)}$ for ${\tilde{g}=e^{2f}g}$. Recall $s^H = 2s^C$}
\begin{align}\label{ChernLaplacian}
-\Lambda(dd^cf)=\Delta^g(f) + g(\theta,df).
\end{align}
Here, $\Delta^g$ denotes the Hodge--de~Rham operator $\Delta^g(f)=\delta^g df$.
\end{lem}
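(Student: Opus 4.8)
The plan is to compute $\Lambda(\ddc f)$ pointwise by passing through the Chern connection $\nabla$ and then trading the resulting \emph{Chern Laplacian} $\tr_g\nabla df$ for the Hodge--de~Rham Laplacian $\Delta^g f$ by means of the difference formula \eqref{Eqn:ChernLeviCivita}. I would work at a fixed point $p$ in a $J$-adapted orthonormal frame that is $\nabla$-parallel at $p$, exactly as in the proof of Proposition~\ref{prop:ScalarComparision}.

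First I would express $\ddc f$ through $\nabla$. Since $\nabla J=0$, the one-form $\alpha=d^\c f=df\circ J$ satisfies $(\nabla_X\alpha)(Y)=H(X,JY)$ for the Chern Hessian $H=\nabla df$, and the torsion formula $d\alpha(X,Y)=(\nabla_X\alpha)(Y)-(\nabla_Y\alpha)(X)+\alpha(T_{XY})$ gives
\begin{equation*}
\ddc f(X,Y)=H(X,JY)-H(Y,JX)+(d^\c f)(T_{XY}).
\end{equation*}
Contracting with $F$ via the Lefschetz trace \eqref{Eqn:LefschetzTrace}, which in the frame reads $\Lambda\phi=\tfrac12\sum_i\phi(e_i,Je_i)$, the Hessian terms collapse (using $J^2=-1$ and that $\{Je_i\}$ is again orthonormal) to $-\tr_g\nabla df$, so that
\begin{equation*}
\Lambda(\ddc f)=-\tr_g\nabla df+\tfrac12\sum_i(d^\c f)(T_{e_i,Je_i}).
\end{equation*}

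Next I would reduce the Chern Laplacian to the Riemannian one. After the second derivatives cancel, comparing $\tr_g\nabla df$ with $\tr_g D^g df$ reduces to comparing $\nabla_{e_i}e_i$ with $D^g_{e_i}e_i$; here \eqref{Eqn:ChernLeviCivita}, together with the total antisymmetry of $t=\tfrac13 d^\c F$ from \eqref{tdcF}, kills the $t$-contribution, and the surviving term $\sum_i g(e_i,T_{e_ie_k})$ equals $-\theta(e_k)$ by the trace description of the torsion $1$-form in \S\ref{ssec:torsion} and \eqref{thetaDef}. This yields $\tr_g\nabla df=-\Delta^g f-g(\theta,df)$. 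Finally I must dispose of the leftover torsion term: in the adapted frame $\sum_i T_{e_i,Je_i}$ is a multiple of the $(1,1)$-trace $\sum_\alpha T(z_\alpha,\bar z_{\bar\alpha})$, and by \eqref{eqn:torsion} the Chern torsion has $T^{1,1}=0$ while its $(2,0)$ and $(0,2)$ $TM$-type pieces vanish on a mixed pair $(z_\alpha,\bar z_{\bar\beta})$ by Lemma~\ref{lem:type}; hence $T(z_\alpha,\bar z_{\bar\beta})=0$ and the term drops out. Combining the three steps produces the claimed identity.

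The main obstacle is twofold. The conceptual point is the vanishing of the explicit torsion term, which hinges on the structural fact $T^{1,1}=0$ and the type bookkeeping of Lemma~\ref{lem:type}; without it one would be left with a spurious contraction of $T$ against $df$. The technical nuisance is tracking the overall sign and normalization through the conventions for $d^\c$ and for $\Lambda$, which I would pin down by specializing to the K\"ahler case ($T=0$, $\theta=0$), where the identity must reduce to the classical $-\Lambda(\ddc f)=\Delta^g f$. As an independent global check one can integrate: using $d(F^{n-1})=\theta\wedge F^{n-1}$ (a consequence of \eqref{thetaDef} and $dF=(dF)_0+\tfrac1{n-1}\theta\wedge F$) one rewrites $\int_M\Lambda(\ddc f)\,\vol_g$ as a boundary term plus $-\int_M d^\c f\wedge\theta\wedge\tfrac{F^{n-1}}{(n-1)!}$, which matches the integral of the right-hand side since $\int_M\Delta^g f\,\vol_g=0$.
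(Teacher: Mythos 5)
Your proposal is correct, but it takes a genuinely different route from the paper. The paper's proof never touches the Chern connection: it simply expands $\Lambda(dd^cf)=\tfrac12\sum_i (dd^cf)(e_i,Je_i)$ in a $J$-adapted frame using the Levi-Civita connection, and the torsion $1$-form appears through the single identity $\theta=J\delta^gF=-\sum_i g\bigl(J(D^g_{e_i}J)e_i,\cdot\bigr)$, applied to the term $(d^cf)\bigl((D^g_{e_i}J)e_i\bigr)$; that is the whole argument. You instead pass through the Chern connection: the torsion formula for $d$ on $1$-forms, the vanishing of the mixed torsion components (your type argument is sound; even more directly, $J$-anti-invariance $T(JX,JY)=-T(X,Y)$ forces $T_{e_i,Je_i}=0$ term by term), and the comparison formula \eqref{Eqn:ChernLeviCivita}, where total antisymmetry of $t$ kills its contribution and the $T$-trace produces $-g(\theta,df)$. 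All three steps check out, so your argument is valid. What your route buys is structural insight the paper's computation hides: it exhibits $\Delta^gf+g(\theta,df)$ — precisely the complex Laplacian $L^g$ used later in \S\ref{sec:conformally} — as minus the $g$-trace of the Chern Hessian $\nabla df$, and it isolates exactly where the torsion enters (only through the first-order comparison term, never through mixed components). What the paper's route buys is brevity: three lines, one frame, no torsion bookkeeping. One caveat you rightly flagged yourself: with your stated convention $d^cf=df\circ J$ the final sign comes out opposite to \eqref{ChernLaplacian}; the paper's convention is $(d^cf)(X)=-df(JX)$ (since $J$ acts on forms by $(J^{-1})^*$), and your proposed calibration against the K\"ahler identity $-\Lambda(dd^cf)=\Delta^gf$ does force exactly that choice, so the calibration step is not optional decoration but is what closes the proof.
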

\begin{proof}
Let $(e_i)_{i=1,\ldots,m}$ by a $J$-adapted orthonormal frame as in \eqref{Jadapted}. Then
\begin{align*}
\Lambda(dd^cf)=&\frac{1}{2}\sum_{i=1}^m\left(dd^cf\right)(e_i,Je_i)\\
=&\sum_{i=1}^m e_i\left(d^cf(Je_i)\right)-d^cf(JD^g_{e_i}e_i)-\left(d^cf\right)\left((D^g_{e_i}J)e_i\right)\\
=&-\Delta^g(f) - g(\theta,df).
\end{align*}
In the last equality we have used $\theta=J\delta^gF=-\displaystyle\sum_{i=1}^mg(J(D^g_{e_i}J)e_i,\cdot)$, which is a straightforward consequence of $D^gg=0$ and $F=g(J\cdot,\cdot)$.
\end{proof}

\begin{cor}
For the Hermitian scalar curvature of $\tilde{g}=e^{2f}g=u^{-2}g$ we have
\begin{align}
e^{2f}\tilde{s}^H &= s^H + m\Delta^g(f) + mg(\theta,df)\label{Eqn:HermScal}\\
\tilde{s}^H &= u^2s^H - mu\left(\Delta^g(u) + g(\theta, du)\right) - m|du|^2_g\label{Eqn:HermScalPoly}
\end{align}
\end{cor}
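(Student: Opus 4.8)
The plan is to bootstrap the first identity \eqref{Eqn:HermScal} directly from the scalar-curvature variation already established, and then obtain \eqref{Eqn:HermScalPoly} by a change of unknown function. First I would recall that $s^H = 2s^C$ and $m = 2n$, so multiplying \eqref{Eqn:Scal} by $2$ gives
\[
e^{2f}\tilde{s}^H = s^H - 2n\Lambda(dd^cf) = s^H - m\Lambda(dd^cf).
\]
Substituting $-\Lambda(dd^cf) = \Delta^g(f) + g(\theta, df)$ from \eqref{ChernLaplacian} then yields \eqref{Eqn:HermScal} immediately.

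For the second identity I would change variables via $u = e^{-f}$, which is consistent with $\tilde g = e^{2f}g = u^{-2}g$. Since $e^{2f} = u^{-2}$, the first identity reads $\tilde{s}^H = u^2\bigl(s^H + m\Delta^g f + m g(\theta, df)\bigr)$, and the remaining task is purely to rewrite the right-hand side in terms of $u$. From $f = -\log u$ we get $df = -u^{-1}\,du$, so that $u^2\, m\, g(\theta, df) = -mu\, g(\theta, du)$, which accounts for the torsion term. For the Laplacian term I would use the chain rule for $\Delta^g = \delta^g d$: for any smooth $\phi$ one has
\[
\Delta^g(\phi(u)) = \phi'(u)\,\Delta^g u - \phi''(u)\,|du|^2_g,
\]
which follows from the product rule $\delta^g(h\,du) = h\,\Delta^g u - g(dh, du)$. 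Taking $\phi(u) = -\log u$ gives $\Delta^g f = -u^{-1}\Delta^g u - u^{-2}|du|^2_g$, hence $u^2\, m\,\Delta^g f = -mu\,\Delta^g u - m|du|^2_g$. Collecting the three contributions produces exactly \eqref{Eqn:HermScalPoly}.

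There is no substantive obstacle here; the computation is algebraic once \eqref{Eqn:Scal} and \eqref{ChernLaplacian} are in hand. The only care required is bookkeeping of the sign convention for the Hodge--de~Rham operator $\Delta^g$ (non-negative) in the chain rule, together with the correct sign in the substitution $f = -\log u$ so that the terms $-mu(\Delta^g u + g(\theta,du))$ and $-m|du|^2_g$ emerge with the signs displayed.
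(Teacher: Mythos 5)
Your proposal is correct and follows essentially the same route as the paper: \eqref{Eqn:HermScal} is obtained by doubling \eqref{Eqn:Scal} and substituting \eqref{ChernLaplacian}, and \eqref{Eqn:HermScalPoly} by the substitution $u=e^{-f}$, where your chain-rule identity $\Delta^g(\phi(u))=\phi'(u)\Delta^g u-\phi''(u)|du|^2_g$ with $\phi=-\log$ is exactly the paper's hint ${-e^f\Delta e^{-f}=\Delta f+|df|^2}$ in disguise. The sign bookkeeping you flag (non-negative Hodge--de~Rham Laplacian, $df=-u^{-1}du$) is indeed the only point of care, and you have handled it correctly.
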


For \eqref{Eqn:HermScalPoly} use ${-e^f{\Delta e^{-f}} = \Delta f + |df|^2}$.
When $J$ is integrable we recover \cite[(23)]{MR742896}.
\Marginpar{
${\Delta \ln \varphi = \varphi^{-1}\Delta \varphi + \varphi^{-2}|d\varphi|^2}$,
${e^{-f}{\Delta e^f} = \Delta f - |df|^2}$
 so
for $\tilde{g}=\varphi^{2a}g$ we have
$
	\varphi^{2a}\tilde{s}^C = s^C + \frac{na}{\varphi}\big(\Delta\varphi + g(\theta,d\varphi)\big) + \frac{na}{\varphi^2}|d\varphi|^2
$
}

\begin{thm}\label{ineq_chern_riem_con_AK}
Let $(M,J,g,F)$ be a closed almost Hermitian manifold of real dimension $m=2n$.
Assume that $g$ is conformally almost K\"ahler. Then
\[
	\int_M (s^H - s^g) \frac{F^n}{n!} \geq 0.
\]
Equality holds precisely when $J$ is integrable and $(J,g,F)$ is K\"ahler.
\end{thm}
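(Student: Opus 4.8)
The plan is to reduce to the almost Kähler case through the conformal formulas of \S\ref{sec:ConformalChanges}. First I would unpack the hypothesis: $g$ conformally almost Kähler means there is $f\in C^\infty(M)$ with $\tilde g=e^{2f}g$ almost Kähler. Since $\tilde F=e^{2f}F$, we have $d\tilde F=e^{2f}(dF+2\,df\wedge F)$, so $d\tilde F=0$ is equivalent to $dF=-2\,df\wedge F$. Comparing this with the trace decomposition $dF=(dF)_0+\frac{1}{n-1}\theta\wedge F$ from \S\ref{ssec:torsion}, and using the algebraic Lefschetz identity $\Lambda(\alpha\wedge F)=(n-1)\alpha$ on $1$-forms (valid pointwise, so no integrability is needed), I would read off the crucial consequence
\[
\theta=-2(n-1)\,df=-(m-2)\,df,
\]
together with $(dF)_0=0$.

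Next I would combine two conformal scalings. For the Hermitian scalar curvature, \eqref{Eqn:HermScal} gives $e^{2f}\tilde s^H=s^H+m\Delta^g f+m\,g(\theta,df)$. For the Riemannian scalar curvature I would invoke the classical Yamabe transformation which, in the paper's convention $\Delta^g=\delta^g d$, reads $e^{2f}\tilde s^g=s^g+2(m-1)\Delta^g f-(m-1)(m-2)|df|^2$. Solving each for $s^H$ and $s^g$, subtracting, and substituting $\theta=-(m-2)\,df$ so that the cross term becomes $-m\,g(\theta,df)=+m(m-2)|df|^2$, the first-order and quadratic terms collapse to
\[
s^H-s^g=e^{2f}\bigl(\tilde s^H-\tilde s^g\bigr)+(m-2)\Delta^g f+(m-2)|df|^2.
\]

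Finally I would integrate against $\vol_g=\frac{F^n}{n!}$. The divergence term drops, $\int_M \Delta^g f\,\vol_g=0$. On the almost Kähler manifold $(M,J,\tilde g,\tilde F)$ the formulas \eqref{almostKaehlerScalar}, equivalently Corollary~\ref{cor:VanishAK}, give $\tilde s^H-\tilde s^g=2|N|^2_{\tilde g}\ge 0$, so the first term is nonnegative as $e^{2f}>0$; the term $(m-2)\int_M|df|^2\,\vol_g$ is nonnegative for $m\ge 2$. Hence $\int_M(s^H-s^g)\,\vol_g\ge 0$. For equality when $m>2$, both contributions must vanish: $(m-2)\int|df|^2=0$ forces $f$ constant, so $g$ is (a constant rescaling of) $\tilde g$ and hence itself almost Kähler, while vanishing of the nonnegative integrand $2e^{2f}|N|^2_{\tilde g}$ forces $N=0$; then $T=N+(d^cF)^{2,0}=0$ and $(J,g,F)$ is Kähler by Remark~\ref{kaehlercase}. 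The converse is immediate since $s^H=s^g$ pointwise in the Kähler case, and $m=2$ is trivially Kähler.

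The main obstacle is a sign: a naive subtraction of the two conformal formulas leaves a term $-(m-1)(m-2)|df|^2$ of the wrong sign, which by itself would defeat the inequality. The only genuinely non-formal input that rescues the argument is the identity $\theta=-(m-2)\,df$ forced by the conformally almost Kähler hypothesis; it turns the cross term $-m\,g(\theta,df)$ into $+m(m-2)|df|^2$, which dominates and leaves a manifestly nonnegative coefficient. I would also carefully reconcile the sign conventions in the Yamabe formula with the paper's Hodge-Laplacian convention $\Delta^g=\delta^g d$ before committing to the final identity.
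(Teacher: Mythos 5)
Your proof is correct and takes essentially the same route as the paper: you read off $\theta=(2-m)\,df$ from $d(e^{2f}F)=0$, combine \eqref{Eqn:HermScal} with Besse's conformal formula for the Riemannian scalar curvature to arrive at exactly the paper's identity \eqref{hopf_operator}, and then integrate and invoke \eqref{almostKaehlerScalar}. The only (harmless) difference is that you spell out the equality analysis ($f$ constant, $N=0$, hence K\"ahler via Remark~\ref{kaehlercase}) which the paper leaves implicit.
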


\begin{proof}
Suppose $(J,\tilde{g}=e^{2f}g)$ is almost K\"ahler. Then by \eqref{almostKaehlerScalar} we have
\[
	\tilde{s}^H - \tilde{s}^{\tilde{g}} = 2|N|_{\tilde{g}}^2 = 2e^{-2f}|N|_{g}^2
\]
Since $\tilde{F}=e^{2f}F$ is closed, we have
\[
	0=d(e^{2f}F) = e^{2f}\left( (dF)_0 + \left(2df + \frac{\theta}{n-1}\right)\wedge F \right)
\]
for the trace-free part $(dF)_0$. From this we read off the torsion $1$-form
\[
	\theta = (2-m)df.
\]
Putting this into \eqref{Eqn:HermScal} and combining with the formula for the conformal variation of the Riemannian scalar curvature (see Besse~\cite[Theorem~1.159]{MR867684}) we get
\begin{equation}\label{hopf_operator}
	e^{2f}\left( \tilde{s}^H - \tilde{s}^{\tilde{g}} \right) =
	(s^H-s^g) + (2-m)\Delta f - (m-2)|df|^2.
\end{equation}
Hence
\[
	\int_M 2|N|_g^2 \frac{F^n}{n!} = \int_M (s^H-s^g)\frac{F^n}{n!} - (m-2)\int_M |df|_g^2 \frac{F^n}{n!}.\qedhere
\]
\end{proof}

\section{Conformally Constant Metrics}\label{sec:conformally}

We shall be concerned with the existence of the following type of metrics:

\begin{defn}
An almost Hermitian metric $(J,g,F)$ is \emph{conformally constant} if for some $f\in C^\infty(M)$ the structure $(J,\tilde{g},\tilde{F})\coloneqq (J,e^{2f}g,e^{2f}F)$ has $\tilde{s}^H=\text{const}$.
\end{defn} 

In Corollary~\ref{corExtend} we prove a sufficient criterion for $(J,g,F)$ to be conformally constant (non-positive fundamental constant). This can be regarded as a generalization of the Chern--Yamabe problem~\cite{Ang-Cal-Spo} to the non-integrable case. Thus the problem is divided into the cases ${C(J,[g])\leq 0}$ and ${C(J,[g])>0}$ according to the fundamental constant. The positive case in the Chern--Yamabe problem is difficult because \eqref{Eqn:HermScal} looses its nice analytic properties stemming from the maximum principle. The question remains open in this case. Restricting to symplectic manifolds, we shall consider instead the following more basic existence problem:

\begin{EXproblem}\label{EXproblem}
Let $(M,\omega)$ be a closed symplectic manifold. Does $M$ admit \emph{any} almost complex structure $J$ such that $(J,g,\omega)$ is conformally constant?
\end{EXproblem}

\Marginpar{It suffices to study this problem in the special case where every almost Hermitian structure on $M$ has positive Gauduchon degree.}

\begin{rem}
We allow conformal variations because if we fix $\omega$ the existence of compatible metrics with $s^H=\text{const}$ is not guaranteed: sometimes one cannot find an {\it extremal} K\"ahler metric~\cite{calabi1985extremal} and for instance on toric manifolds the existence of extremal K\"ahler metrics is conjecturally equivalent to the existence of extremal almost-K\"ahler metrics (see~\cite{MR1988506} and also for example~\cite{MR2425136,MR2807093,Apostolov:2017aa,Keller:2017aa,levine1985remark,tian1997kahler,ross2006obstruction}).
\end{rem}

\begin{CYproblem}\label{CY}
Given a closed almost Hermitian manifold $M$, find a conformal structure $(J,e^{2f}g,e^{2f}F)$ of constant Hermitian scalar curvature. In other words, is every $(J,g,F)$ conformally constant?
\end{CYproblem}

As shown in \eqref{Eqn:HermScal}, the Hermitian scalar curvature transforms by the same formula as in the integrable case. Here we show how to extend the main results of~\cite{Ang-Cal-Spo} to the non-integrable case, as well as some results of independent interest. We mention also the work \cite{del2003yamabe} where a similar problem for the $J$-scalar curvature is studied, which is derived from the Riemannian curvature.

Recall that Gauduchon showed in~\cite{MR0470920} that every conformal class $[g]$ has a natural base-point $g_0=e^{-2f_0}g$. It is characterized by having a co-closed torsion $1$-form $\theta_0$, once we normalize $g_0$ to unit volume. In terms of the \emph{complex Laplacian}
\[
	L^g(f) \coloneqq \Delta^g f + g(\theta,df),
\]
this is equivalent to $(L^g)^*e^{(m-2)f_0} = 0$ and ${\int_M e^{-mf_0} \frac{F^n}{n!} = 1}$.

\begin{defn}
${(J,g_0\coloneqq e^{-2f_0}g,F_0\coloneqq e^{-2f_0}F)}$ is the \emph{Gauduchon metric} in the conformal class $[g]$.
The \emph{fundamental constant} is (see~\cite{Ang-Cal-Spo,MR1712115,MR779217,MR0486672})\Marginpar{If not normalized to unit volume, the correct expression is $\int_M e^{(2-m)f_0} s^H \frac{F^n}{n!} / \int_M e^{-mf_0} \frac{F^n}{n!}$. I.e.~for the unnormalized Gauduchon metric, one needs to divide by the total volume.}
\begin{equation}\label{GauduchonDegree}
	C(M,J,[g]) \coloneqq \int_M e^{(2-m)f_0} s^H \frac{F^n}{n!} \overset{\eqref{Eqn:HermScal}}{=} \int_M s_0^H \frac{F_0^n}{n!}.
\end{equation}
\end{defn}

%
In the Hermitian setting, the fundamental constant plays a central role in the Plurigenera Theorem~\cite{MR0486672}
and is closely related to the Kodaira dimension. The different cases in the Chern--Yamabe problem are $C<0$, $C=0$, and $C>0$.

Recall that the \emph{Yamabe constant} is defined in terms of the Riemannian structure
\[
Y[g]\coloneqq
\inf  \left\{\int_M s^{\tilde{g}} \vol_{\tilde{g}} \enskip\middle|\enskip \tilde{g}=e^{2f}g,\enskip \int_M \vol_{\tilde{g}} = 1\right\}.
\]
We remark that Yamabe, Trudinger, Aubin, and Schoen have shown that $[g]$ contains metrics of constant Riemannian scalar curvature $Y[g]$ (see \cite{MR888880} for a full account). From Theorem~\ref{THM:vanishing} we immediately get (see~\cite{MR1712115} in the integrable case):

\begin{prop}
In dimension $2n=4$ we have the estimate
\[
Y[g] \leq C(M^4,J,[g])
\]
with equality if and only if the Gauduchon metric $(J,g_0,F_0)$ is K\"ahler of constant scalar curvature.
\end{prop}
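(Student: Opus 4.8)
The plan is to use the unit-volume Gauduchon metric $g_0 = e^{-2f_0}g$ as a competitor in the variational definition of the Yamabe constant, and then to obtain the estimate by sandwiching $\int_M s^{g_0}\,\vol_{g_0}$ between $Y[g]$ and $C(M^4,J,[g])$. The equality statement should then fall out by analysing the two inequalities that make up this sandwich separately.

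Concretely, since $g_0$ lies in $[g]$ and is normalized to unit volume, it is admissible in the infimum defining $Y[g]$, so
\[
Y[g] \le \int_M s^{g_0}\,\vol_{g_0}.
\]
I would then apply Theorem~\ref{THM:vanishing}(ii) to the almost Hermitian manifold $(M,J,g_0,F_0)$: in real dimension four this gives $\int_M (s_0^H - s^{g_0})\frac{F_0^n}{n!}\ge 0$, that is,
\[
\int_M s^{g_0}\,\vol_{g_0} \le \int_M s_0^H\,\frac{F_0^n}{n!} = C(M^4,J,[g]),
\]
where the last equality is the definition \eqref{GauduchonDegree} of the fundamental constant for the unit-volume Gauduchon metric. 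Chaining the two inequalities yields $Y[g]\le C(M^4,J,[g])$.

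For the equality discussion I would note that $Y[g]=C$ forces equality in both displayed inequalities. Equality in the application of Theorem~\ref{THM:vanishing}(ii) holds exactly when $(J,g_0,F_0)$ is K\"ahler. Equality in the first inequality means $g_0$ attains the Yamabe infimum, i.e.\ $g_0$ is a Yamabe minimizer in $[g]$; by the resolution of the Yamabe problem such a minimizer is a critical point of the Yamabe functional and hence has constant Riemannian scalar curvature. Since $g_0$ is K\"ahler its Riemannian and Hermitian scalar curvatures coincide, so $g_0$ is K\"ahler of constant scalar curvature, as claimed.

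The converse is where the main difficulty lies. If $(J,g_0,F_0)$ is K\"ahler then $s_0^H = s^{g_0}$ and the second inequality is automatically an equality; the delicate point is to show that a constant scalar curvature K\"ahler metric genuinely realizes the Yamabe infimum, so that the first inequality is also an equality. When $s^{g_0}\le 0$ this is routine, because a conformal class of nonpositive Yamabe constant contains a unique unit-volume constant scalar curvature metric, which is therefore the minimizer. In the positive case the statement is genuinely subtle, since csc metrics of positive scalar curvature need not minimize the Yamabe functional; here I would exploit that a K\"ahler structure has integrable $J$, reducing matters to compact complex surfaces, and invoke the classical integrable-case result of LeBrun~\cite{MR1712115}. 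I expect this positive-curvature K\"ahler Yamabe-minimality to be the crux of the equality statement.
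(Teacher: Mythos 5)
Your proof of the inequality and of the ``only if'' half of the equality statement is exactly the paper's own argument: the paper deduces the proposition ``immediately'' from Theorem~\ref{THM:vanishing}, i.e.\ from the chain $Y[g]\le \int_M s^{g_0}\vol_{g_0}\le \int_M s_0^H\frac{F_0^n}{n!}=C(M,J,[g])$, where the first inequality uses the unit-volume Gauduchon metric as a competitor and the second is Theorem~\ref{THM:vanishing}\,ii) applied to $(J,g_0,F_0)$; equality then forces $g_0$ to be a Yamabe minimizer (hence of constant scalar curvature) and, by the equality case of Theorem~\ref{THM:vanishing}, K\"ahler. Up to that point your proposal is correct and matches the paper.

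The converse is where there is a genuine gap, and it cannot be repaired by a citation: the statement you attribute to LeBrun --- that a K\"ahler metric of constant \emph{positive} scalar curvature on a compact complex surface realizes the Yamabe infimum of its conformal class --- is false. Take $M=\C P^1\times\C P^1$ with the product complex structure and let $g_\epsilon$ be the product of round metrics of radii $\epsilon$ and $1$. This is a K\"ahler metric of constant scalar curvature $2\epsilon^{-2}+2>0$, so its unit-volume rescaling \emph{is} the Gauduchon metric of $[g_\epsilon]$, and since $s^H=s^g$ for K\"ahler metrics,
\[
C(M,J,[g_\epsilon])=\frac{\int_M s^{g_\epsilon}\vol_{g_\epsilon}}{\bigl(\int_M\vol_{g_\epsilon}\bigr)^{1/2}}
=\frac{8\pi\left(1+\epsilon^2\right)}{\epsilon},
\]
which tends to $\infty$ as $\epsilon\to 0$, whereas Aubin's inequality gives $Y[g_\epsilon]\le Y(S^4)=8\sqrt{6}\,\pi$ for every $\epsilon$. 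Hence for small $\epsilon$ the Gauduchon metric is K\"ahler of constant (positive) scalar curvature and yet $Y[g_\epsilon]<C(M,J,[g_\epsilon])$ strictly. So the ``if'' direction is not merely delicate in the positive case, as you suspected: it fails there, and equality is genuinely equivalent to ``$g_0$ is K\"ahler \emph{and} attains $Y[g]$'', which collapses to ``$g_0$ is K\"ahler of constant scalar curvature'' only in the non-positive case that you handled correctly via uniqueness of unit-volume constant scalar curvature metrics. You should also be aware that the paper itself never addresses this point --- its entire proof is the word ``immediately'' together with the reference \cite{MR1712115} for the integrable case --- so its biconditional, read literally, is contradicted by the same example. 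Your identification of the positive case as the crux was sound; the step you left to LeBrun is not missing from your write-up so much as impossible to supply.
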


\begin{prop}\label{propVZ}
Let $(M^{m},J,g,F)$ be a closed almost Hermitian manifold. Then there exists a conformal metric $\tilde{g} \in [g]$ whose Hermitian scalar curvature has the same sign as $C$ at every point \textup(meaning zero when $C=0$\textup).
\end{prop}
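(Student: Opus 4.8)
The plan is to treat this as a linear prescription problem for the complex Laplacian $L^g$. By \eqref{Eqn:HermScal} a conformal change $\tilde g = e^{2f}g$ satisfies $e^{2f}\tilde s^H = s^H + m\,L^g(f)$, where $L^g(f) = \Delta^g f + g(\theta, df)$. Since $e^{2f} > 0$, the pointwise sign of $\tilde s^H$ coincides with that of $s^H + m L^g(f)$, so it suffices to produce $f$ and a target function $\psi$ of the prescribed sign with $s^H + m L^g(f) = \psi$, that is, to solve the linear equation $L^g(f) = \tfrac1m(\psi - s^H)$. Because both $C(M,J,[g])$ and the conformal class are unchanged if we replace $g$ by a conformal multiple, I would first pass to the Gauduchon representative $g_0 = e^{-2f_0}g$ of \cite{MR0470920} and relabel it $g$; then $\delta^g\theta = 0$ and, by the second equality in \eqref{GauduchonDegree}, $C = \int_M s^H\,\frac{F^n}{n!}$.

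Next I would analyze $L^g$ as a second-order elliptic operator on the closed manifold $M$, hence Fredholm of index zero. Since $L^g$ has no zeroth-order term, the maximum principle forces $\ker L^g$ to consist of constants. In the Gauduchon gauge the formal adjoint is $(L^g)^*h = \Delta^g h - g(\theta, dh) + (\delta^g\theta)h = \Delta^g h - g(\theta, dh)$, which again has no zeroth-order term, so by the same maximum principle $\ker (L^g)^*$ is spanned by the constant function $1$; this is precisely the Gauduchon normalization $(L^g)^* e^{(m-2)f_0}=0$ with $f_0 = 0$. Consequently the image of $L^g$ is the $L^2$-orthogonal complement of the constants, namely $\{w : \int_M w\,\frac{F^n}{n!} = 0\}$.

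The proof then concludes by choosing $\psi$ appropriately. When $C = 0$ I take $\psi = 0$: the equation $L^g(f) = -\tfrac1m s^H$ is solvable precisely because $\int_M s^H\,\frac{F^n}{n!} = C = 0$, and the resulting $\tilde g$ has $\tilde s^H \equiv 0$. When $C \neq 0$ I take $\psi$ to be the constant $c_0 = C\big/\!\int_M \frac{F^n}{n!}$, which has the sign of $C$; the solvability condition $\int_M (\psi - s^H)\,\frac{F^n}{n!} = 0$ holds by the choice of $c_0$, and the resulting metric has $\tilde s^H = e^{-2f}c_0$ of the sign of $C$ at every point.

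I expect the only genuine subtlety—and the reason for the gauge reduction—to lie in correctly matching the cokernel of $L^g$ to the sign of $C$: in a general gauge $\ker(L^g)^*$ is spanned by the positive weight $e^{(m-2)f_0}$, whose $L^2$-pairing with $s^H$ need not share the sign of the invariant $C$, which \eqref{GauduchonDegree} defines via the opposite weight. Passing to the Gauduchon metric collapses this weight to the constant $1$ and makes the solvability integral literally equal to $C$, after which the argument is routine Fredholm theory combined with the Hopf maximum principle.
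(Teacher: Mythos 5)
Your proof is correct and takes essentially the same route as the paper: pass to the Gauduchon representative, identify $\ker (L^{g_0})^*$ with the constants via the maximum principle (using $\delta^{g_0}\theta_0=0$), solve the resulting linear equation whose right-hand side integrates to zero, and read off $\tilde{s}^H = e^{-2f}\cdot\mathrm{const}$ from \eqref{Eqn:HermScal}. Your version even carries the factor $\tfrac{1}{m}$ that the paper's displayed equation drops, which is harmless since it only rescales the solution.
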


\begin{proof}
The adjoint of the complex Laplacian $L^{g_0}$ of the Gauduchon metric $g_0$ is $\Delta^{g_0}f - g_0(\theta_0,df)$, where we use $\delta^{g_0}\theta_0=0$. By the maximum principle $\ker (L^{g_0})^*$ are the constant functions (for more details, see~\cite{MR0470920}). Hence the equation
\[
	L^{g_0}f = C(J,[g])-s^H_{g_0}
\]
is solvable for $f$, since the right hand side is orthogonal to the constants. Defining $\tilde{g}=e^{2f}g_0$, equation \eqref{Eqn:HermScal} shows $\tilde{s}^H = e^{-2f}C(J,[g])$.
\end{proof}

\begin{rem}\label{CzeroCase}
This generalizes \cite[Theorem 3.1]{Ang-Cal-Spo} to the non-integrable case. It follows that the Chern--Yamabe problem is solvable when $C=0$. The same conclusion (and same proof) holds for the third scalar curvature, where $C$ is replaced by the integral of the third scalar curvature of the Gauduchon metric $g_0$.
\end{rem}

A careful review of the analytic content of the argument given by Angella--Calamai--Spotti for \cite[Theorem~4.1]{Ang-Cal-Spo} reveals the following statement:

\begin{thm}[\cite{Ang-Cal-Spo}]\label{ACS}
Let $(M^{m},J,g,F)$ be a closed almost Hermitian manifold, and let ${S\colon M\to \mathbb{R}}$ be any strictly negative smooth function \textup(not necessarily the scalar curvature\textup). Then the PDE
\begin{equation}\label{PDE-ACS}
	mL^g(f) + S = \lambda e^{2f}
\end{equation}
has a solution $(\lambda, f) \in \mathbb{R} \times C^\infty(M)$; in fact we must have $\lambda<0$. The solution is unique up to replacing $(\lambda,f)$ by $(\lambda e^{-2c}, f+c)$ for a constant $c$. Thus by scaling we may solve \eqref{PDE-ACS} for any given negative $\lambda$.
\end{thm}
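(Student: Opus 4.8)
The plan is to fix the scaling ambiguity, reduce to a single semilinear elliptic equation whose zeroth-order nonlinearity has the favourable sign coming from $S<0$, and then combine the method of sub- and supersolutions with the maximum principle. First observe that $(\lambda,f)\mapsto(\lambda e^{-2c},f+c)$ is a symmetry of \eqref{PDE-ACS}, since $L^g(f+c)=L^g(f)$ for a constant $c$ and $\lambda e^{-2c}e^{2(f+c)}=\lambda e^{2f}$. Hence it suffices to produce one solution with $\lambda<0$ and to prove that, once $\lambda$ is prescribed, $f$ is unique; the displayed scaling then accounts for all solutions and shows every negative $\lambda$ occurs. I would therefore seek a solution of
\[
	mL^g(f)+e^{2f}=-S,
\]
that is, the normalization $\lambda=-1$, writing $\psi:=-S>0$.

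For existence I would apply the method of sub- and supersolutions to $N[f]:=mL^g(f)+e^{2f}-\psi$. The strict positivity of $\psi$ is exactly what supplies ordered constant barriers: any constant $c_-$ with $e^{2c_-}\le\min_M\psi$ is a subsolution and any constant $c_+$ with $e^{2c_+}\ge\max_M\psi$ a supersolution, with $c_-\le c_+$ automatic. Since $L^g(f)=\Delta^g f+g(\theta,df)$ is the Laplacian plus a fixed first-order drift, the shifted operator $\Delta^g+g(\theta,d\,\cdot\,)+K$ is invertible and obeys the maximum principle for every $K>0$, the drift causing no difficulty. A monotone iteration between $c_-$ and $c_+$, with $K>\tfrac{2}{m}\max_M\psi$ chosen so that $t\mapsto Kt-\tfrac1m e^{2t}$ is increasing on $[c_-,c_+]$, converges to a solution $f$ with $c_-\le f\le c_+$; Schauder theory and bootstrapping on the smooth right-hand side upgrade $f$ to $C^\infty$.

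To see that every solution has $\lambda<0$ I would integrate against the positive weight from Gauduchon's theorem. The conformal class contains a Gauduchon representative, equivalently $w:=e^{(m-2)f_0}>0$ lies in the kernel of the formal adjoint, $(L^g)^*w=0$. Pairing \eqref{PDE-ACS} with $w$ and using $\int_M (L^g f)\,w\,\tfrac{F^n}{n!}=\int_M f\,(L^g)^*w\,\tfrac{F^n}{n!}=0$ yields
\[
	\int_M S\,w\,\frac{F^n}{n!}=\lambda\int_M e^{2f}\,w\,\frac{F^n}{n!}.
\]
As $S<0$ and $w>0$ the left side is negative, while $\int_M e^{2f}w\,\tfrac{F^n}{n!}>0$; hence $\lambda<0$.

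Finally, for uniqueness fix $\lambda<0$ and suppose $f_1,f_2$ both solve \eqref{PDE-ACS}. Subtracting and setting $u=f_1-f_2$ gives $mL^g u=\lambda(e^{2f_1}-e^{2f_2})$. At a maximum of $u$ one has $du=0$ and $\Delta^g u\ge0$, so $L^g u\ge0$ there; since $\lambda<0$ this forces $e^{2f_1}\le e^{2f_2}$, i.e.\ $u\le0$, at that point, whence $\max_M u\le0$. The symmetric argument at a minimum gives $\min_M u\ge0$, so $u\equiv0$; together with the scaling symmetry this yields the stated uniqueness and realises every negative $\lambda$. The only external input is Gauduchon's positive adjoint kernel, and the main point to handle with care is that the drift $g(\theta,df)$ does not spoil the maximum principle underlying both the monotone iteration and the uniqueness argument — which it does not, as first-order terms vanish at interior extrema. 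The whole scheme relies decisively on the strict negativity of $S$, precisely the feature that is absent in the positive fundamental-constant case.
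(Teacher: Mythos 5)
Your proposal is correct, but it takes a genuinely different route from the paper on the existence and uniqueness parts. The paper's proof is essentially a citation: existence and uniqueness of \eqref{PDE-ACS} are deferred to the continuity-method argument of Angella--Calamai--Spotti \cite[p.~11]{Ang-Cal-Spo}, and the only step the paper re-verifies in the non-integrable setting is the sign claim that any solution forces $\lambda<0$ --- which it does by rewriting $L^g(f)=e^{-2f_0}L^{g_0}(f)$ and integrating against the Gauduchon volume $\frac{F_0^n}{n!}$, the first-order term dropping out because $\delta^{g_0}\theta_0=0$. Your $\lambda<0$ argument is the same computation in adjoint form: pairing with a positive element of $\ker (L^g)^*$ (whether one writes that weight as $e^{(m-2)f_0}$ or $e^{(2-m)f_0}$ is immaterial here, since only its positivity enters). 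Where you genuinely diverge is that you make the theorem self-contained: existence via the constant sub/supersolutions $c_\pm$ trapping $\psi=-S$, monotone iteration with the shifted operator $mL^g+K$ (whose inverse positivity indeed survives the drift $g(\theta,d\,\cdot\,)$, since first-order terms vanish at interior extrema on a closed manifold), plus elliptic bootstrap; and uniqueness for fixed $\lambda<0$ by evaluating $mL^gu=\lambda\bigl(e^{2f_1}-e^{2f_2}\bigr)$ at extrema of $u=f_1-f_2$ --- a correct use of the Hodge--de~Rham sign convention, under which $\Delta^g u\ge 0$ at a maximum. The trade-off: the paper's route is shorter and isolates exactly which ingredient needs re-proving once integrability is dropped (Gauduchon's theorem and nothing else), whereas yours replaces the imported continuity method by a more elementary barrier argument that works precisely because strict negativity of $S$ supplies ordered constant barriers --- the feature that, as you note, disappears in the positive case. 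Both are valid proofs of the stated theorem.
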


\begin{proof}
This is proven in \cite[p.~11]{Ang-Cal-Spo} by the continuity method. For their argument it is only important to see that for any solution $f$ of \eqref{PDE-ACS} we  have $\lambda<0$. In our general setting, this follows since putting the formula $L^g(f) = e^{-2f_0} L^{g_0}(f)$ into \eqref{PDE-ACS} and integrating gives
\[
	m \underset{=0}{\underbrace{\int_M \left(\Delta^{g_0}f + g_0(\theta_0,df)\right)\frac{F_0^n}{n!}}} + \underset{<0}{\underbrace{\int_M Se^{2f_0} \frac{F_0^n}{n!}}} = \lambda \underset{>0}{\underbrace{\int_M e^{2(f+f_0)}\frac{F_0^n}{n!}}}.\qedhere
\]
\end{proof}

%
%

Combining this with Proposition~\ref{propVZ} and \eqref{Eqn:HermScal} we thus obtain the following generalization of \cite[Theorem~4.1]{Ang-Cal-Spo}:

\begin{cor}\label{corExtend}
Every closed almost Hermitian manifold with ${C(J,[g])\leq 0}$ is conformally constant \textup(see~also Remark~\textup{\ref{CzeroCase}}\textup).
\end{cor}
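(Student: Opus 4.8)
The plan is to recast the property ``conformally constant'' as the solvability of the PDE appearing in Theorem~\ref{ACS}, and then to reduce the general hypothesis $C\le 0$ to the strictly negative case treated there. First I would combine the transformation law \eqref{Eqn:HermScal} with the definition of the complex Laplacian $L^g(f)=\Delta^g f + g(\theta,df)$ to note that the deformed structure $(J,e^{2f}g,e^{2f}F)$ has Hermitian scalar curvature equal to a constant $\lambda$ if and only if
\[
	mL^g(f) + s^H = \lambda e^{2f}.
\]
Hence $(J,g,F)$ is conformally constant precisely when this equation is solvable for some $(\lambda,f)$, which is exactly \eqref{PDE-ACS} with $S=s^H$. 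Theorem~\ref{ACS} solves such an equation, but only under the hypothesis that $S$ be \emph{strictly negative}, and $s^H$ itself need not have a sign.

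The crux is therefore to eliminate this sign obstruction using the fundamental constant. When $C=0$ there is no analytic difficulty: Proposition~\ref{propVZ} (equivalently Remark~\ref{CzeroCase}) already produces a metric $\tilde g\in[g]$ with $\tilde s^H\equiv 0$, which is constant, so the structure is conformally constant. When $C<0$, Proposition~\ref{propVZ} produces a metric $g_1=e^{2f_1}g\in[g]$ whose Hermitian scalar curvature $s^H_{g_1}$ is strictly negative at every point. Taking $g_1$ as a new base point and setting $S=s^H_{g_1}<0$, I would then invoke Theorem~\ref{ACS} to solve $mL^{g_1}(f_2)+s^H_{g_1}=\lambda e^{2f_2}$ for suitable $\lambda<0$ and $f_2\in C^\infty(M)$. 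By \eqref{Eqn:HermScal} the metric $g_2=e^{2f_2}g_1=e^{2(f_1+f_2)}g$ then satisfies $s^H_{g_2}=\lambda$, a constant, and $g_2$ lies in the original conformal class $[g]$; hence $(J,g,F)$ is conformally constant.

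The main obstacle is precisely the strict-negativity hypothesis of Theorem~\ref{ACS}, and the whole content of the argument is that Proposition~\ref{propVZ} lets us arrange it: in the sign-definite regime $C<0$ we may first conformally deform so that $s^H<0$ everywhere, after which the continuity-method existence result applies verbatim to the second conformal factor. By contrast, in the complementary regime $C>0$ no such sign normalization is available and the maximum-principle mechanism underlying Theorem~\ref{ACS} degenerates, which is exactly why that case is not covered here.
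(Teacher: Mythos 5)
Your proposal is correct and follows essentially the same route as the paper, whose proof is precisely the combination of Proposition~\ref{propVZ} (to normalize the sign of $s^H$ according to $C$), Theorem~\ref{ACS} (applied with $S$ the now strictly negative scalar curvature when $C<0$), and the transformation law \eqref{Eqn:HermScal} to identify solutions of \eqref{PDE-ACS} with constant-scalar-curvature conformal metrics. The case split $C=0$ versus $C<0$ and the composition of the two conformal factors are exactly the details the paper leaves implicit.
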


\begin{rem}
We refer also to Berger~\cite{MR0301680} for a related question. When $(J,g)$ is K\"ahler (or more generally when $[g]$ is a balanced conformal class) he essentially constructs
solutions of \eqref{PDE-ACS} where $S$ is the Hermitian scalar curvature of $(J,g)$ and $\lambda$ is a given non-positive function. 
\end{rem}


\section{Ruled Manifolds}\label{sec:BabyCase}

We begin our study of the Existence Problem~\ref{EXproblem} for positive fundamental constant with ruled manifolds.
On complex manifolds, $C(J,[g])>0$ implies Kodaira dimension $-\infty$, by the Gauduchon Plurigenera Theorem~\cite{MR0486672}. The Kodaira dimension of ruled manifolds is $-\infty$ (conversely, this however does not imply $C(J,[g])>0$).

Angella--Calamai--Spotti~\cite[Section 5]{Ang-Cal-Spo} have given first simple examples of Hermitian non-K\"ahler manifolds of positive constant Hermitian scalar curvature (for instance on the Hopf surface or abstractly by deformations using the implicit function theorem).
In this section, we demonstrate the existence of almost Hermitian non-K\"ahler metrics of positive constant Hermitian scalar curvature on ruled manifolds (see~\cite{MR1647571} in the case of {\it{extremal}} K\"ahler metrics). We mention also Hong's work \cite{hong1999constant}, which is different in that only the K\"ahler \emph{class} is fixed.

\subsection{The generalized Calabi construction}

Let us briefly review the construction. The reader may consult~\cite{MR2144249,MR2425136,MR2807093} for more details and greater generality.

Let $(S,\omega_S)$ be a symplectic manifold.
For a torus $T$ with Lie algebra $\mathfrak{t}$ let $\pi\colon P \to S$ be a principal $T$-bundle with connection $\theta \in \Omega^1(P;\mathfrak{t})$.
Assume
\begin{equation}\label{eqn:curvAssumption}
d\theta = {p}\cdot \pi^*\omega_S
\end{equation}
for some fixed ${p}\in \mathfrak{t}$. Let $(V,g_V,\omega_V)$ be a toric almost K\"ahler manifold for the same torus and moment map $\mu\colon V\to \Delta\subset \mathfrak{t}^*$. Pick $c\in \mathbb{R}$ with (here $\langle,\rangle$ is evaluation)
\begin{equation}\label{eqn:cCondition}
P(v)\coloneqq \langle v,{p} \rangle + c >0\qquad \forall v\in \Delta.
\end{equation}

Given this data, the generalized Calabi construction determines a symplectic structure $\omega_M$ on the total space of the associated bundle
\begin{equation}
	M \coloneqq P\times_T V\xrightarrow{\pi} S.
\end{equation}
On the free stratum $V^0=\mu^{-1}(\Delta^0)$ over the interior of the Delzant polytope let $\alpha\colon TV^0\to \mathfrak{t}$ be the $g$-orthogonal projection onto the orbits. The linear map
\[
T_p P \times T_v V^0\to\mathfrak{t},\quad
(X,Y) \mapsto \theta(X) + \alpha(Y)
\]
is invariant under the action of the tangent group and thus induces a $1$-form $\theta^0$ on $M^0\coloneqq P\times_T V^0$.
\Marginpar{In local coordinates where $P$ is trivial, the connection $\theta=A+\theta_{\mathrm{MC}}$ is given by a $\mathfrak{t}$-valued $1$-form $A$ on the base, where $\theta_{\mathrm{MC}}\in \Omega^1(T;\mathfrak{t})$ denotes the Maurer-Cartan form. Then $\theta^0=A+dt$, where $t\colon V^0\to T$ are angle coordinates.}
The moment map factors over the projection to $\mu\colon M\to\Delta$. Set
\begin{equation}\label{connectionAssoc}
	\omega_M = P(\mu)\pi^*\omega_S+ \langle d\mu\wedge \theta^0\rangle.
\end{equation}
Generally, the $g$-orthogonal projection $\alpha$ is a map $T_v V \to \mathfrak{t}/\mathfrak{t}_v$ up to the isotropy Lie algebra $\mathfrak{t}_v$. Since $d\mu^\xi_v$ vanishes for $\xi \in \mathfrak{t}_v$ the definition of $\langle d\mu\wedge\theta^0 \rangle$ naturally extends so that $\omega_M$ is also defined over all of $M$. \eqref{eqn:curvAssumption} implies that $\omega_M$ is closed.

When $S$ has an almost K\"ahler metric $(J_S,g_S,\omega_S)$ we get  an almost K\"ahler metric on $M$ as follows. Let $\mathbf{G}$ be the metric on $\Delta^0 \subset \mathfrak{t}^*$ that turns $\mu$ into a Riemannian submersion, 
let $\mathbf{H}$ be the dual metric on the cotangent bundle of $\Delta^0$. Precomposing with $\mu$ we obtain pairings $\mathbf{G}_p\colon \mathfrak{t}^*\otimes \mathfrak{t}^*\to \mathbb{R}$, $\mathbf{H}_p\colon \mathfrak{t}\otimes \mathfrak{t}\to \mathbb{R}$ at each  $p\in M$. Then\Marginpar{This structure is integrable if those on $S$ and $V$ are integrable, see~\cite[Theorem~2]{MR2144249}.}
\begin{equation}\label{CalabiMetric}
	g_M\coloneqq P(\mu)\pi^*g_S + \mathbf{G}(d\mu\otimes d\mu) + \mathbf{H}(\theta^0\otimes\theta^0).
\end{equation}

\begin{rem}\label{ToricMetric}
The metric on $V$ is determined by $\mathbf{G}$: recall that every metric $\mathbf{G}$ on $\Delta^0$ subject to appropriate boundary conditions (see~\cite[Proposition~1]{MR2144249} or \eqref{one}, \eqref{three} below) compactifies to an $\omega_V$-compatible almost complex structure $g_V$. Recall also that, up to symplectomorphism, 
any metric on $V$ arises this way \cite[Lemma~3]{MR2144249}.
\end{rem}

\subsection{Ruled manifolds}

We now restrict to $T=S^1$.
We shall say that a Hermitian line bundle $L$ with connection has \emph{degree} $p\in \mathbb{R}$ if $R^L = p\omega_S$ for the curvature.

\begin{rem}
Modifying $\omega_S$ slightly, such line bundles always exist for closed $S$. Indeed, an arbitrary small perturbation of $\omega_S$ is a symplectic form that represents a rational cohomology class, so some $q\omega_S$ with $q\in \mathbb{Q}$ represents an integer cohomology class (see~\cite[Observation~4.3]{MR1356781}). 
The corresponding Konstant--Souriau line bundle has the required properties, with $p=1/q$.
Another important class of examples is when $S$ is a Riemann surface. Here, holomorphic line bundles are determined by their degree $p\in \mathbb{Z}$ with $c_1(L)=p[\omega_S]$. Using the $\partial\bar\partial$\nobreakdash-Lemma we find a Hermitian connection whose curvature $2$-form is precisely $p\omega_S$.
\end{rem}

Let $V=\C P^1$ with Fubini--Study symplectic form $\omega_{\mathrm{FS}}$ and Delzant polytope ${\Delta=[0,1]}$. As in \eqref{eqn:cCondition} choose $c$ with $P(x)\coloneqq px+c$  positive on $[0,1]$.

\begin{defn}
The \emph{ruled manifold} belonging to $(L\to S,c)$ is $M \coloneqq \mathbb{P}(L\oplus \C)$ equipped with the symplectic form $\omega_{M,c}$ from \eqref{connectionAssoc}. 
\end{defn}

Hence $M$ is obtained by compactifying each fiber of $L$ to a sphere. Following~\cite{MR2807093} we assume also that the base $S$ has constant Hermitian scalar curvature.

\Marginpar{$M$ is obtained by gluing $L$ to its conjugate bundle $\bar{L}$ away from their zero sections using inversion at the unit circle, defined by the Hermitian metric.}

\subsection{Existence problem}\label{ssec:ExistenceProblem}

Since the scalar curvature is $S^1$\nobreakdash-invariant, it is our strategy to consider only conformal variations $u=\varphi\circ \mu$ for $\varphi\colon [0,1]\rightarrow \mathbb{R}^+$.

\begin{thm}\label{thm:babycase}
Let $({M^m=\mathbb{P}(L\oplus\C)}, \omega_{M,c})$ be a ruled manifold over a closed K\"ahler manifold ${(S^{m-2},g_S,\omega_S)}$ of constant positive scalar curvature.
Choose ${u=a\mu+b}$ with $a,b>0$. Rescaling the volume of $S$ if necessary, there exists for $c$ sufficiently large \Marginpar{$c$ only depending on $a,b$ and the degree $p$} a compatible $S^1$\nobreakdash-invariant K\"ahler metric $g$ on $M$ so that $\tilde{g}\coloneqq u^{-2}g$ has constant Hermitian scalar curvature \textup(for $\tilde\omega=u^{-2}\omega_{M,c}$\textup).
\end{thm}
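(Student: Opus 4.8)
The plan is to produce the metric inside the generalized Calabi construction, where the whole problem becomes an explicit boundary value problem on the interval $\Delta=[0,1]$. Since $T=S^1$, I would look for $g$ in the Calabi ansatz \eqref{CalabiMetric}, which by Remark~\ref{ToricMetric} is determined by a profile function $\mathbf{H}=\mathbf{H}(x)$ on $[0,1]$ (with $\mathbf{G}=\mathbf{H}^{-1}$) subject to the boundary conditions of \cite[Proposition~1]{MR2144249}; for $V=\C P^1$ these read $\mathbf{H}(0)=\mathbf{H}(1)=0$ and $\mathbf{H}'(0)=-\mathbf{H}'(1)=2$ together with $\mathbf{H}>0$ on $(0,1)$ (the Fubini--Study profile being $\mathbf{H}(x)=2x(1-x)$). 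Because the symplectic form \eqref{connectionAssoc} is independent of $\mathbf{H}$, every such profile yields a metric compatible with the fixed $\omega_{M,c}$, and since $S$ and $\C P^1$ are integrable the resulting $g$ is K\"ahler (\cite[Theorem~2]{MR2144249}). In particular $\theta=0$, so the conformal formula \eqref{Eqn:HermScalPoly} for $\tilde g=u^{-2}g$ collapses to
\[
\tilde s^H = u^2 s^H - m\,u\,\Delta^g u - m\,|du|^2_g .
\]

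First I would render every term an explicit function of $x=\mu$. From \eqref{CalabiMetric} one reads off $g^{xx}=\mathbf{H}$ and $\vol_g = P(x)^{n-1}\vol_{g_S}\,dx\wedge(\cdots)$, the factors $\mathbf{H}^{-1}$ and $\mathbf{H}$ cancelling. With $u=ax+b$, hence $u'=a$, this gives $|du|^2_g = a^2\mathbf{H}$ and $\Delta^g u = -a\,(P^{n-1}\mathbf{H})'/P^{n-1}$. For the K\"ahler metric $g$ the Hermitian scalar curvature equals the Riemannian one and is given by the Apostolov--Calderbank--Gauduchon--T{\o}nnesen-Friedman formula \cite{MR2144249,MR2807093} over a base of constant scalar curvature $s_S$, namely $s^H = s_S/P - (P^{n-1}\mathbf{H})''/P^{n-1}$. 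Substituting, imposing $\tilde s^H\equiv\lambda$, multiplying by $P^{n-1}$ and setting $F=P^{n-1}\mathbf{H}$ turns the requirement into the second-order linear ODE
\[
u^2 F'' - m a\,u\,F' + m a^2 F = s_S\,u^2 P^{n-2} - \lambda\,P^{n-1}.
\]

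Next I would solve this ODE explicitly. As $P$ is affine, the substitution $w=u=ax+b$ makes the left-hand side the Cauchy--Euler operator $a^2\big(w^2F_{ww}-m\,wF_w+mF\big)$, with indicial equation $(r-1)(r-m)=0$; thus the homogeneous solutions are $w$ and $w^{m}$, and a particular solution is an explicit polynomial in $w$. In terms of $F$ the boundary conditions become $F(0)=F(1)=0$, $F'(0)=2c^{n-1}$ and $F'(1)=-2(p+c)^{n-1}$. These are four conditions, to be matched against the two integration constants, the target value $\lambda$, and the base scalar curvature $s_S$, which I am free to prescribe by rescaling the volume of $S$. For any chosen $a,b>0$ this linear system determines $F$, $\lambda$ and the required $s_S$; one checks that for large $c$ the prescribed $s_S$ comes out positive, so that $S$ indeed retains positive constant scalar curvature.

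The main obstacle is positivity: the profile $\mathbf{H}=F/P^{n-1}$ yields a genuine $\omega_{M,c}$-compatible K\"ahler metric only if $\mathbf{H}>0$ on $(0,1)$. The endpoint data $\mathbf{H}(0)=\mathbf{H}(1)=0$ with $\mathbf{H}'(0)=2>0>-2=\mathbf{H}'(1)$ force $\mathbf{H}>0$ near $0$ and near $1$, so everything hinges on excluding interior zeros, and this is exactly where the hypothesis ``$c$ sufficiently large'' is used. I would carry out an asymptotic analysis as $c\to\infty$, scaling $\vol(S)$ so that the base contribution $s_S/P$ converges to a fixed positive constant; the solutions $\mathbf{H}=\mathbf{H}_c$ then converge in $C^1[0,1]$ to an explicit limit profile $\mathbf{H}_\infty$, a combination of $1,\,u,\,u^2,\,u^{m}$ pinned down by the same four boundary conditions. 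The delicate point is to prove that $\mathbf{H}_\infty$ has \emph{no} interior zero by a direct sign analysis of this explicit function; granting this, continuity gives $\mathbf{H}_c>0$ on $(0,1)$ for all sufficiently large $c$, which completes the construction. This final positivity estimate is the heart of the argument and the step I expect to demand the most care.
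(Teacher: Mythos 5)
Your proposal follows the paper's own proof almost step for step: the Calabi ansatz with profile $\mathbf{H}$ subject to \eqref{one}, \eqref{three}, the conformal formula \eqref{Eqn:HermScalPoly} with $u=a\mu+b$, the substitution $F=P^{n-1}\mathbf{H}$ turning constancy of $\tilde{s}^H$ into a Cauchy--Euler equation with homogeneous solutions $w,w^m$ (this is exactly \eqref{ODE1} after the shift to the variable $x+\lambda$, $\lambda=b/a$), and the $4\times4$ linear system in the two integration constants, the target constant, and the base scalar curvature, analyzed asymptotically as $c\to\infty$ (Proposition~\ref{ODEpart} and Lemma~\ref{asympLemma} in the paper).

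The only step I would push back on is your final positivity inference, and it is precisely the point the paper singles out as the main difficulty. You argue: the limit profile $\mathbf{H}_\infty$ is positive on $(0,1)$, hence ``continuity gives $\mathbf{H}_c>0$'' for large $c$. With mere uniform ($C^0$) closeness this inference is false --- since $\mathbf{H}_\infty$ vanishes at $x=0,1$, the solutions could dip below zero arbitrarily close to the endpoints no matter how well they approximate $\mathbf{H}_\infty$; the paper states this caveat verbatim just before Proposition~\ref{ODEpart}. Your claim of $C^1[0,1]$ convergence does repair the argument, but it must be invoked explicitly at the endpoints: $C^1$-closeness to $\mathbf{H}_\infty$ with $\mathbf{H}_\infty'(0)=2$ gives $\mathbf{H}_c'\geq 1$ on a neighborhood of $0$ that is \emph{uniform in $c$}, and then $\mathbf{H}_c(0)=0$ forces $\mathbf{H}_c>0$ there (likewise at $1$), leaving only a fixed compact subinterval where interior positivity of the limit plus $C^0$-closeness suffices. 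Your earlier remark that the endpoint data force positivity ``near $0$ and near $1$'' is true for each fixed $c$, but with neighborhoods that a priori shrink as $c$ grows --- that non-uniformity is the whole issue. The paper packages the same asymptotics more cleanly: $f_c'(x+\lambda)/(2c^{n-1})=1-2x+O(c^{-1})$ uniformly, so for large $c$ the solution has a single interior critical point, necessarily a maximum, hence is positive. Finally, the ``delicate sign analysis'' of $\mathbf{H}_\infty$ you anticipate turns out to be vacuous: the limit is the Fubini--Study profile $2x(1-x)$ (in the paper's normalization, $f_c(x+\lambda)=2c^{n-1}x(1-x)+O(c^{n-2})$), manifestly positive on the interior; all the real work lies in the endpoint uniformity just described.
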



This solves the Existence Problem~\ref{EXproblem} on ruled manifolds.
They are examples of almost Hermitian manifolds of positive fundamental constant (see~Proposition~\ref{value_of_C}) which are not covered by the results of the previous section. Instead of rescaling the base one may also change the Fubini--Study form on the fibers by a fixed factor.

\begin{rem}
By Apostolov--Calderbank--Gauduchon--T{\o}nnesen-Friedman, ruled manifolds with $c$ sufficiently large also admit an extremal metric \cite[Theorem~4]{MR2807093}.
\end{rem}

As recalled above in Remark~\ref{ToricMetric}, compatible $S^1$\nobreakdash-invariant metrics on $(\C P^1,\omega_{\mathrm{FS}})$ correspond to smooth functions $\mathbf{H}\colon [0,1] \rightarrow \mathbb{R}$ satisfying the boundary conditions
\begin{align}
\begin{aligned}\label{one}
\mathbf{H}(0)&=\mathbf{H}(1)=0,\\
\mathbf{H}'(0)&=2=-\mathbf{H}'(1),
\end{aligned}\\
\mathbf{H}(x)>0\quad(0<x<1).\label{three}
\end{align}
Then $\mathbf{H}$ and $g_S$ determine an $\omega_M$-compatible almost K\"ahler metric $g_M$ via \eqref{CalabiMetric}. The metric $g$ we seek will be of the form \eqref{CalabiMetric} and is hence determined by a function $\mathbf{G}^{-1}\coloneqq\mathbf{H}$ satisfying \eqref{one}, \eqref{three}. Let $t\colon V^0 \to S^1$ be the angle coordinate on the round sphere $\C P^1 \setminus \{N,S\}$. Let $(x^i)$ be local coordinates on $S$ over which $L$ is trivialized. The connection then corresponds to a local $1$-form $A=A_i dx^i$ on the base. We have local coordinates $(x^i,\mu,t)$  on $M^0$ in which the induced $1$-form can be written $\theta^0 = A + dt$. From \eqref{connectionAssoc} we then get the volume form
\begin{equation}\label{volumeCalabi}
\vol_M = \frac{\omega_{M,c}^n}{n!} = \frac{1}{n}P(\mu)^{n-1} \vol_S\wedge d\mu\wedge dt.
\end{equation}
According to \eqref{CalabiMetric}, the local expression for the metric $g$ is:
\[
\big[P(\mu)g_{ij}^S + \mathbf{H}(\mu)A_i(\mu)A_j(\mu) \big] dx^i dx^j + \mathbf{G}(\mu) d\mu d\mu + \mathbf{H}(\mu) dtdt + 2\mathbf{H}(\mu) A_i(x) dx^idt
\]
From this we see $d\mu^{\sharp_g}=\mathbf{H}(\mu)\frac{\partial}{\partial\mu}$. Putting this into the formula $d(i_{\grad_g f} \vol_M) = -\Delta^g(f)\vol_M$ we get for the Hodge--de~Rham Laplacian of the moment map
\begin{equation}\label{LaplacianMoment}
\Delta^g(\mu) = -\frac{(P^{n-1} \mathbf{H})'(\mu)}{P(\mu)^{n-1}}. 
\end{equation}
Note also the general formula $\Delta^g(\varphi\circ \mu) = \varphi'(\mu)\Delta^g \mu - \varphi''(\mu)|d\mu|^2_g$.

We shall refer the analytic part of the proof to the next subsection.

\begin{proof}[Proof of Theorem~\ref{thm:babycase}]
According to Proposition~\ref{ODEpart} below with $\lambda\coloneqq b/a$ we find unique $A,B > 0$ and $f\in C^\infty([\lambda,1+\lambda])$ strictly positive on the interior satisfying \eqref{ODE1}, \eqref{ODE2}. 
Rescaling the volume, we assume that $B$ is the scalar curvature of $S$.

By \cite[Lemma~9]{MR2807093} the Hermitian scalar curvature of $(M,g,\omega_{M,c})$ is (omitting the argument $\mu$ and where the derivatives are taken as a functions of $x \in [0,1]$)
\begin{equation}\label{ScalCalabi}
	s^H = \frac{B}{P} - \frac{(P^{n-1} \mathbf{H})''}{P^{n-1}}.
\end{equation}
Combining \eqref{Eqn:HermScalPoly}, \eqref{LaplacianMoment}, and \eqref{ScalCalabi} we get for $\tilde{g}=u^{-2}g$, where $u=\varphi\circ\mu$:
\[
	\tilde{s}^H = \varphi^2\frac{B}{P} - \varphi^2\frac{(P^{n-1}\mathbf{H})''}{P^{n-1}} + m\varphi\varphi'\frac{(P^{n-1}\mathbf{H})'}{P^{n-1}} + m\left(\varphi\varphi''-(\varphi')^2\right)\mathbf{H}
\]
If $\varphi(x)=ax+b$ and defining $f(x+\lambda)= P(x)^{n-1}\mathbf{H}(x)$ we see from \eqref{ODE1} that we have found a solution $\mathbf{H}$ to this equation where $\tilde{s}^H = a^2A$. Condition \eqref{one} is \eqref{ODE2} and \eqref{three} is just the positivity of $f$.
\end{proof}

\begin{prop}\label{value_of_C}
For $c$ sufficiently large, the metric $\tilde{g}$ on $M=P(L\oplus \C)$ in Theorem~\textup{\ref{thm:babycase}} has positive constant Hermitian scalar curvature. When $n=\dim_\C M=2$
\begin{equation}\label{GauduchonDegreeCalabi}
	C(M,J,[g])=\frac{2s^H_S+8c+4p}{2c+p}.
\end{equation}
\end{prop}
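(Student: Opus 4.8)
The plan is to exploit the fact that the metric $g$ produced by Theorem~\ref{thm:babycase} is genuinely \emph{K\"ahler}, not merely almost K\"ahler. A K\"ahler metric has vanishing torsion $1$-form, so $\delta^g\theta=0$ and $g$ is already the Gauduchon representative of its conformal class $[g]$. Consequently the weight $e^{(2-m)f_0}$ in the definition \eqref{GauduchonDegree} is trivial ($f_0=0$), and $C$ collapses to the ratio of total Hermitian scalar curvature to total volume,
\[
	C(M,J,[g]) = \frac{\int_M s^H\,\vol_g}{\int_M \vol_g},
\]
both computed for $g$ itself. (Equivalently the numerator is the topological pairing $2\int_M \rho\wedge\frac{F^{n-1}}{(n-1)!}=4\pi\langle c_1(M,J)\cup[\omega_{M,c}]^{n-1},[M]\rangle$, since $g$ is K\"ahler, but the direct integral below is shorter.)

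For the two integrals I would insert the explicit fibered expressions: the volume form from \eqref{volumeCalabi} and the Hermitian scalar curvature from \eqref{ScalCalabi}, both functions of $\mu$ alone. Since the base and fibre directions integrate out trivially, everything reduces to one-dimensional integrals over the moment interval $[0,1]$, namely
\[
	\int_M s^H\,\vol_g = \tfrac{\vol(S)\,\ell}{n}\int_0^1\!\Big(B\,P^{n-2}-(P^{n-1}\mathbf{H})''\Big)\,dx,\qquad \int_M \vol_g = \tfrac{\vol(S)\,\ell}{n}\int_0^1\! P^{n-1}\,dx,
\]
where $\ell=\int_{S^1}dt$ and $P(x)=px+c$. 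The one nontrivial step is the divergence term: integrating $(P^{n-1}\mathbf{H})''$ leaves the boundary values of $(P^{n-1}\mathbf{H})'=(n-1)p\,P^{n-2}\mathbf{H}+P^{n-1}\mathbf{H}'$, which by the boundary conditions \eqref{one} ($\mathbf{H}(0)=\mathbf{H}(1)=0$, $\mathbf{H}'(0)=2=-\mathbf{H}'(1)$) equal $2c^{n-1}$ at $x=0$ and $-2(p+c)^{n-1}$ at $x=1$. Thus the $\mathbf{H}$-dependence drops out entirely and $C$ becomes an explicit rational expression in $c,p,B$ and the dimension.

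Specialising to $n=2$ gives $\int_0^1 B\,P^{0}\,dx=B$, $\int_0^1(P\mathbf{H})''\,dx=-2p-4c$, and $\int_0^1 P\,dx=c+\tfrac{p}{2}$; forming the ratio yields $C=\frac{2B+8c+4p}{2c+p}$, and the normalisation in Theorem~\ref{thm:babycase} identifies $B$ with the scalar curvature of the K\"ahler base, which equals its Hermitian scalar curvature $s^H_S$, giving \eqref{GauduchonDegreeCalabi}. For positivity of $\tilde{s}^H$, note that $\tilde{s}^H=a^2A$ is constant by Theorem~\ref{thm:babycase} and $A>0$ by Proposition~\ref{ODEpart}; alternatively, the formula above shows $C>0$ for $c$ large (numerator $\sim 4c^{n-1}$, denominator $\sim c^{n-1}$), and since a constant–Hermitian–scalar–curvature metric in $[g]$ must have that constant share the sign of the conformally invariant $C$, this again forces $\tilde{s}^H>0$. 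The main obstacle here is not analytic—the genuine difficulty (existence and positivity of the solution) is already isolated in Proposition~\ref{ODEpart}—but rather the careful bookkeeping needed to line up the Gauduchon normalisation and the curvature convention so that $B$ is exactly $s^H_S$ and the coefficients in \eqref{GauduchonDegreeCalabi} come out correctly.
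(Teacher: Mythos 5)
Your proposal is correct and follows essentially the same route as the paper: since the K\"ahler metric $g$ is Gauduchon, $C(M,J,[g])$ reduces to $\int_M s^H \vol_g \big/ \int_M \vol_g$, which one evaluates by fiber integration (the paper phrases this via the coarea formula), integration by parts against the boundary conditions \eqref{one}, and the same $n=2$ arithmetic; positivity of the constant $\tilde{s}^H$ comes, as you note, from $\tilde{s}^H=a^2A$ with $A>0$ in Proposition~\ref{ODEpart}, while the large-$c$ asymptotics give $C>0$ in general. No gaps.
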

\begin{proof}
The coarea formula applied to the submersion $\mu\colon M\to [0,1]$
and volume form $\omega_M^n/n!$ shows that for measurable $f\colon [0,1]\to \mathbb{R}$ 
we have (see also~\cite[p.~17]{MR2807093})
\Marginpar{In general, when $f$ is any measurable function \emph{on $M$} we have $\int_M f \frac{\omega_M^n}{n!} = \int_\Delta \int_{\mu^{-1}(x)} fP(x)\frac{(\pi^*\omega_S)^{n-1}}{(n-1)!}dx$}
\begin{equation}\label{coarea}
	\int_M (f\mu) \frac{\omega_M^n}{n!} = \int_0^1 f(x) \operatorname{vol}(\mu^{-1}x) dx.
\end{equation}
On $\mu^{-1}(x)$ the form \eqref{connectionAssoc} restricts to $P(x)\pi^*\omega_S$, hence $\operatorname{vol}(\mu^{-1}x)=P(x)^{n-1}\operatorname{vol}(B)$. Without loss we may suppose $\operatorname{vol}(B)=1$. The K\"ahler metric $(\omega_M,g)$ is Gauduchon and normalizing \eqref{GauduchonDegree} to unit volume gives
\[
	C(M,J,[g]) = \frac{1}{\operatorname{vol}(M)}\int_M s^H \frac{\omega_M^n}{n!}
\]
Recall $P(x)=px+c$. Evaluating using \eqref{coarea} gives
\begin{align*}
\operatorname{vol}(M) &= \int_M \frac{\omega_M^n}{n!} = \operatorname{vol}(B)\int_0^1 P(x)^{n-1} dx = \frac{(p+c)^n - c^n}{pn}\\
\int_M s^H\frac{\omega_M^n}{n!} & \overset{\eqref{ScalCalabi}}{=} \int_0^1 \frac{s^H_S - (P\varphi)''(x)}{P(x)} \operatorname{vol}(\mu^{-1}x) dx\\
&= \operatorname{vol}(B)\int_0^1 \left[s^H_S - (P\varphi)''(x) \right] P(x)^{n-2} dx
\end{align*}
Integrating by parts and inserting the boundary condition \eqref{one} gives
\begin{align*}
&\int_M s^H\frac{\omega_M^n}{n!} =
s^H_S \frac{(p+c)^{n-1} - c^{n-1}}{p(n-1)} - \left[(P \varphi)'(x)P(x)^{n-2}\right]^1_0 + \int_0^1(P\varphi)'(x)P(x)^{n-2}dx\\
&= s^H_S \frac{(p+c)^{n-1} - c^{n-1}}{p(n-1)} + 2c^{n-1}+2(p+c)^{n-1} + p\int_0^1 \varphi'(x)P(x)^{n-2}dx + \int_0^1 \varphi(x) P(x)^{n-1}dx
\end{align*}
The last integral is $\geq 0$ since $\varphi$ and $P$ are non-negative on $[0,1]$ by \eqref{three}, while the second to last integral is of order $O(c^{n-2})$. The result follows. If $n=2$ then $\int_0^1(P\varphi)'(x)P(x)^{n-2}dx=0$ by the boundary conditions, and we obtain \eqref{GauduchonDegreeCalabi}.
%
%
\end{proof}

\subsection{Analytic part}\label{ssec:analyticpart}

We will use Landau notation in a narrower sense than usual.
For us $O(c^n)$ stands for an arbitrary Laurent polynomial in $c$
of degree less than $n$ with smooth coefficients (constants unless we are working with functions of $x$). 
The reason for this restriction is that we need to ensure the rules
\begin{align}
\int O(c^n)dx &= O(c^n),
&\frac{d}{dx} O(c^n) &= O(c^n), 
& x^k O(c^n) &= O(c^n).\label{generalLandau}
\end{align}

Elementary properties of the determinant (Cramer's rule) show:

\begin{lem}\label{asympLemma}
Let $M_c=(M^1_c , \cdots , M^N_c)\in \mathbb{R}^{N\times N}$ be a matrix whose columns are functions of $c$ satisfying $M^j_c = M^j_\infty c^{n_j} + O(c^{n_j-1})$.
Then for $M_\infty=(M^1_\infty, \cdots, M^N_\infty)$
\[
	\det(M_c) = \det(M_\infty) c^{n_1+\cdots+n_N} + O(c^{n_1+\cdots+n_N-1}).
\]
In particular, when $M_\infty$ is invertible it follows that $M_c$ is invertible for $c$ sufficiently large. Assume $b_c = b_\infty c^m + O(c^{m-1}) \in \mathbb{R}^N$ and let $x_c$ and $x_\infty$ be the respective solutions to $M_cx_c=b_c$ and $M_\infty x_\infty=b_\infty$. Then for the $j$\nobreakdash-th entry
\[
x^j_c=x^j_\infty c^{m-n_j} + O(c^{m-n_j-1}).
\]
\end{lem}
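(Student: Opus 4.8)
The plan is to reduce both assertions to the multilinearity of the determinant together with Cramer's rule, tracking $c$-degrees throughout. First I would prove the determinant asymptotic. Since $\det$ is multilinear and alternating in its columns, I substitute $M^j_c = M^j_\infty c^{n_j} + R^j_c$ with $R^j_c = O(c^{n_j-1})$ and expand $\det(M_c) = \det(M^1_\infty c^{n_1} + R^1_c, \ldots, M^N_\infty c^{n_N} + R^N_c)$ into $2^N$ determinants, one for each subset $S\subseteq\{1,\dots,N\}$ of columns in which the remainder $R^j_c$ is chosen. The term $S=\emptyset$ is exactly $\det(M_\infty)\,c^{n_1+\cdots+n_N}$, obtained by pulling the scalars $c^{n_k}$ out of each column. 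Each of the other $2^N-1$ determinants contains at least one remainder column, whose $c$-degree is strictly below $n_j$, while every remaining column contributes degree at most $n_k$; hence such a determinant is a Laurent polynomial of $c$-degree strictly less than $n_1+\cdots+n_N-1$, i.e.\ it lies in $O(c^{n_1+\cdots+n_N-1})$. Summing yields the first formula. If $\det(M_\infty)\neq 0$ then $\det(M_c)=\det(M_\infty)c^{n_1+\cdots+n_N}\bigl(1+O(c^{-1})\bigr)$ is nonzero for $c$ large, which is the invertibility claim.

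I would then treat the solution via Cramer's rule. Let $M_c^{(j)}$ denote $M_c$ with its $j$-th column replaced by $b_c$, and $M_\infty^{(j)}$ the analogue for the limit, so that $x^j_c=\det(M_c^{(j)})/\det(M_c)$ while, for the limiting system, $x^j_\infty=\det(M_\infty^{(j)})/\det(M_\infty)$. The columns of $M_c^{(j)}$ have leading degrees $n_1,\dots,n_{j-1},m,n_{j+1},\dots,n_N$, so the determinant expansion just proved gives $\det(M_c^{(j)})=\det(M_\infty^{(j)})\,c^{(n_1+\cdots+n_N)-n_j+m}+O\bigl(c^{(n_1+\cdots+n_N)-n_j+m-1}\bigr)$. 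Dividing this by $\det(M_c)=\det(M_\infty)c^{n_1+\cdots+n_N}(1+O(c^{-1}))$ and factoring out the leading powers of $c$ gives $x^j_c=x^j_\infty c^{m-n_j}\cdot(1+O(c^{-1}))/(1+O(c^{-1}))$; expanding $1/(1+O(c^{-1}))$ as a geometric series valid for large $c$ collects the remainder into $O(c^{m-n_j-1})$, which is precisely the claimed expansion.

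The step I expect to require the most care is this final division. Whereas $\det(M_c)$ and $\det(M_c^{(j)})$ are genuine Laurent polynomials in $c$, the quotient $x^j_c$ is only a rational function of $c$, so the symbol $O(c^{m-n_j-1})$ on the right must be read as an honest asymptotic bound as $c\to\infty$ rather than literally as a Laurent polynomial of that degree. To make this rigorous I would fix the compact parameter interval, use smoothness of the coefficients together with $\det(M_\infty)$ staying bounded away from zero to obtain control of the geometric series uniform in $x$, and then verify that the resulting remainder is compatible with the three closure rules \eqref{generalLandau}, so that the integrations and $x$-differentiations subsequently performed on $x_c$ in \S\ref{ssec:analyticpart} preserve this order.
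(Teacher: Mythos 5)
Your proposal is correct and follows exactly the route the paper intends: the paper's entire proof of this lemma is the one-line remark that ``elementary properties of the determinant (Cramer's rule)'' give the result, and your multilinear expansion of $\det(M_c)$ over subsets of remainder columns plus Cramer's rule for $x^j_c$ is precisely that argument written out in detail. Your closing caution---that the quotient $\det(M_c^{(j)})/\det(M_c)$ is only a rational function of $c$, so the final $O(c^{m-n_j-1})$ must be read as a uniform asymptotic bound compatible with the rules \eqref{generalLandau} rather than literally as a Laurent polynomial---is a genuine refinement of the paper's narrow Landau convention, and is exactly what is needed for the later use in Proposition~\ref{ODEpart}.
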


To complete the proof of Theorem~\ref{thm:babycase} it remains to prove the following:

\begin{prop}\label{ODEpart}
For any $m=2n \in \mathbb{N}$, $\lambda>0$, and $P(x)=px+c$ with $c$ sufficiently large, there exists a unique solution $(A,B,f) \in \mathbb{R}_{>0}\times \mathbb{R}_{>0} \times C^\infty([\lambda,1+\lambda])$ of
\begin{equation}\label{ODE1}
x^2f''(x) -mxf'(x) + mf(x) = -A\cdot P(x-\lambda)^{n-1} + B\cdot x^2 \cdot P(x-\lambda)^{n-2}
\end{equation}
with initial values
\begin{align}\label{ODE2}
f(\lambda)&=f(1+\lambda)=0
	& f'(\lambda)&=2c^{n-1}
	& f'(1+\lambda)&=-2(p+c)^{n-1}.
\end{align}
Moreover, $f$ is strictly positive on $]\lambda,1+\lambda[$.
\end{prop}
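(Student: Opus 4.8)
The plan is to treat \eqref{ODE1} as an inhomogeneous linear ODE whose homogeneous part is of Cauchy--Euler type, and to read off existence, uniqueness, and positivity from the large-$c$ asymptotics provided by Lemma~\ref{asympLemma}. First I would analyze the operator $L[f]=x^2f''-mxf'+mf$. Since $L[x^k]=(k-1)(k-m)x^k$, the indicial roots are $k=1$ and $k=m$, so the homogeneous solutions are $x$ and $x^m$. Because $P(x-\lambda)=p(x-\lambda)+c$ is affine, the right-hand side of \eqref{ODE1} is a polynomial in $x$ of degree at most $n$, which is $<m=2n$; dividing each monomial $x^k$ by $(k-1)(k-m)$ (and inserting a harmless $x\ln x$ term, smooth on $[\lambda,1+\lambda]$ since $\lambda>0$, should the degree-one coefficient fail to vanish) produces a particular solution $f_p$ depending linearly on $(A,B)$. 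Hence every solution is $f=\alpha x+\beta x^m+f_p(A,B)$, and imposing the four conditions \eqref{ODE2} becomes a $4\times4$ linear system $M_c\,(\alpha,\beta,A,B)^{\mathsf T}=b_c$ whose entries are Laurent polynomials in $c$ in the sense of \eqref{generalLandau}.

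Next I would extract the leading order in $c$. Since $P(x-\lambda)^{n-1}=c^{n-1}(1+O(1/c))$ and $x^2P(x-\lambda)^{n-2}=c^{n-2}x^2(1+O(1/c))$ uniformly on the interval, the natural scaling is $A=O(1)$, $B=O(c)$, $\alpha,\beta=O(c^{n-1})$, and the rescaled unknown $\hat f_c\coloneqq c^{-(n-1)}f$ solves a boundary value problem whose coefficients and data converge to those of the limiting problem
\[
x^2\phi''-mx\phi'+m\phi=-A_\infty+B_\infty x^2,\qquad \phi(\lambda)=\phi(1+\lambda)=0,\quad \phi'(\lambda)=2=-\phi'(1+\lambda).
\]
Solving this explicitly gives $\phi(x)=-\tfrac{A_\infty}{m}+C_2x+\tfrac{B_\infty}{2-m}x^2+\beta_\infty x^m$, and the associated $4\times4$ matrix $M_\infty$ is the leading term of $M_c$. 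Checking $\det M_\infty\neq0$ (a finite computation in $\lambda$ and $m$) makes $M_c$ invertible for large $c$, and Lemma~\ref{asympLemma} then yields both existence and uniqueness of $(A,B,f)$ and the convergence of all coefficients of $\hat f_c$ to those of $\phi$; in particular $\hat f_c\to\phi$ in $C^\infty[\lambda,1+\lambda]$. The same asymptotics must be shown to force $A_\infty>0$ and $B_\infty>0$, which gives $A,B>0$ for $c$ large.

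The main obstacle is the strict positivity of $f$, and here the limiting solution carries the weight. I would first prove $\phi>0$ on $(\lambda,1+\lambda)$: since $2-m<0$, one has $\phi''=\tfrac{2B_\infty}{2-m}+m(m-1)\beta_\infty x^{m-2}$, so once the signs $B_\infty>0$ and $\beta_\infty\le0$ are confirmed from the explicit solution, $\phi$ is concave and hence positive between its two boundary zeros. The passage from $\phi$ to $f$ is then an endpoint-matching argument that must be made uniform in $c$: on any compact interior subinterval $\hat f_c\to\phi>0$ uniformly gives $\hat f_c>0$; near $x=\lambda$ we use $\hat f_c(\lambda)=0$ together with $\hat f_c'\to\phi'$ and $\phi'(\lambda)=2>0$ to conclude $\hat f_c'>0$, hence $\hat f_c>0$, on a fixed right-neighborhood for all large $c$, and symmetrically near $x=1+\lambda$ using $\phi'(1+\lambda)=-2<0$. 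Covering $(\lambda,1+\lambda)$ by these three regions yields $f>0$ throughout. The delicate points are verifying the sign of $\beta_\infty$ (equivalently the concavity of $\phi$) and ensuring the near-boundary estimates hold uniformly as $c\to\infty$ rather than merely pointwise.
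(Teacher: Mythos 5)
Your proposal follows essentially the same route as the paper's proof: Cauchy--Euler homogeneous solutions $x,x^m$, an explicit particular solution depending linearly on $(A,B)$ (the paper uses reduction of order, producing the same $x\ln x$-type terms), the resulting $4\times4$ linear system analyzed via Lemma~\ref{asympLemma}, explicit solution of the limiting system — which gives $A_\infty=2m\lambda(1+\lambda)>0$, $B_\infty=2(m-2)$, $\beta_\infty=0$ and the limit profile $\phi(x+\lambda)=2x(1-x)$, confirming the sign checks you defer — and positivity from uniform $C^1$ convergence to this profile. The paper's endgame, namely that $f_c'(x+\lambda)/(2c^{n-1})=1-2x+O(c^{-1})$ uniformly forces a single interior critical point which must be a maximum, is just a compressed version of your three-region covering argument.
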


Care must be taken to prove the positivity since near $\lambda$ the solution functions $f_c$ for $c\rightarrow \infty$ could oscillate around zero into the negative, even if the limiting function $f_\infty$ is strictly positive on the interior.

\begin{proof}
The corresponding homogeneous equation has solutions $x, x^m$. Applying `reduction of order' \cite[p.~242]{tenenbaum1985ordinary} we obtain all solutions of \eqref{ODE1} in the form
\begin{equation}\label{EQf}
	f(x)= (u_1(x)+C)x+(u_2(x)+D)x^m
\end{equation}
for arbitrary constants $C, D$ and where 
\begin{align}\label{uIntegrals}
u_1(x) &= \frac{-1}{m-1}\int Q(x) x^{-2}dx
&u_2(x) &= \frac{1}{m-1}\int Q(x)x^{-m-1} dx
\end{align}
are chosen primitives. Since the integrands are Laurent polynomials in $x$, the singularities are at zero and we conclude that all solutions $f(x)$ are smooth on $[\lambda,1+\lambda]$.

For each parameter $c$ we consider the initial value problem \eqref{ODE2}. From \eqref{EQf} we see that these are \emph{linear} equations for $\vec{x}=(A,B,C,D)$. Let $M_c$ denote the corresponding $4\times 4$ matrix, so that we are looking for solutions of
\begin{equation}\label{Mc}
M_c\cdot \vec{x}_c
=
(0,0,2c^{n-1},-2(p+c)^{n-1})^\mathrm{T}.
\end{equation}
Once we show that $M_c$ is invertible, this singles out a unique solution $f_c$. By Lemma~\ref{asympLemma} to show that $M_c$ is invertible and to understand the asymptotics of the solution $\vec{x}_c$ we need only keep track of the dominant powers of $c$ in front of the variables. 
Putting 
\[
	Q(x) = Bc^{n-2}x^2 - Ac^{n-1} + A\cdot O(c^{n-2}) +O(c^{n-3})
\]
into \eqref{uIntegrals} and remembering \eqref{generalLandau} gives
\begin{align*}
u_1(x) &= \frac{-1}{m-1}\left( Ac^{n-1}x^{-1} + Bc^{n-2}x \right) + A\cdot O(c^{n-2})+O(c^{n-3})\\
u_2(x) &= \frac{1}{m-1} \left( \frac{Bc^{n-2}}{2-m}x^{2-m} + \frac{Ac^{n-1}}{m} x^{-m} \right)+ A\cdot O(c^{n-2})+O(c^{n-3})
\end{align*}
and so
\begin{equation}\label{fAsy}
	f(x) = A\left( \frac{-c^{n-1}}{m}+O(c^{n-2}) \right)
	+B\left( \frac{-c^{n-2}}{m-2} + O(c^{n-3}) \right)x^2
	+Cx
	+Dx^m.
\end{equation}
Inserting this into \eqref{ODE2} leads to the matrix 
\[
M_c= \begin{pmatrix}
\frac{-c^{n-1}}{m} + O(c^{n-2}) & \frac{-c^{n-2}}{m-2}\lambda^2+O(c^{n-3}) & \lambda & \lambda^m\\
\frac{-c^{n-1}}{m} + O(c^{n-2}) & \frac{-c^{n-2}}{m-2}(1+\lambda)^2+O(c^{n-3}) & 1+\lambda & (1+\lambda)^m\\
O(c^{n-2}) & \frac{-c^{n-2}}{n-1}\lambda+O(c^{n-3}) & 1 & m\lambda^{m-1}\\
O(c^{n-2}) & \frac{-c^{n-2}}{n-1}(1+\lambda)+O(c^{n-3}) & 1 & m(1+\lambda)^{m-1}
\end{pmatrix}.
\]
Also $b_c = (0,0,2,-2) c^{n-1} + O(c^{n-2})$. The matrix
\[
M_\infty = \begin{pmatrix}
\frac{-1}{m} & \frac{-\lambda^2}{m-2} & \lambda & \lambda^m\\
\frac{-1}{m} & \frac{-(1+\lambda)^2}{m-2} & 1+\lambda & (1+\lambda)^m\\
0 & \frac{-\lambda}{n-1} & 1 & m\lambda^{m-1}\\
0 & \frac{-(1+\lambda)}{n-1} & 1 & m(1+\lambda)^{m-1}
\end{pmatrix}
\]
is invertible. The equation $M_\infty\cdot x_\infty = (0,0,2,-2)^\mathrm{T}$ has the solution
\[
x_\infty = \big(2m\lambda(1+\lambda), 2(m-2),2(1+2\lambda),0\big)
\]
 which by Lemma~\ref{asympLemma} gives us
\begin{align*}
	A_c &= 2m\lambda(1+\lambda)+O(c^{-1}),
	&B_c &= 2(m-2)c + O(c^0),\\
	 C_c &= 2(1+2\lambda)c^{n-1} + O(c^{n-2}),
	& D_c &= O(c^{n-2}).
\end{align*}
Putting this into \eqref{fAsy} shows
\[
	f_c(x+\lambda) = 2c^{n-1}x(1-x)  + O(c^{n-2}).
\]
Hence
\[
	\frac{f'_c(x+\lambda)}{2c^{n-1}} = 1-2x + O(c^{-1})
\]
uniformly in $x$. 
It follows that for $c$ sufficiently large $f_c$ has precisely one extreme point on $[\lambda,1+\lambda]$ (close to $1/2+\lambda$). From \eqref{ODE2} we see that this must be a maximum and hence $f_c$ is positive on $[\lambda,1+\lambda]$.
\end{proof}


\begin{figure}[h]
\centering
\begin{tikzpicture}
 \begin{axis}[
  axis x line=center,
  axis y line=center,
  xtick={0,0.5,1},
  ytick={1,2},
  xlabel={$x$},
  ylabel={},
  xlabel style={below right},
  ylabel style={above left},
  xmin=-0.1,
  xmax=1.1,
  ymin=-0.5,
  ymax=3]
    \addplot[domain=0:1] {1.60801 +  x* (4.57629 + x* (-5.19246 + (-2.54247 - 1.27123 *x)* x)) + (2.31986 +     4.63973*x)*ln(0.5+x)} node[above,pos=0.5] {$f_c(x+\lambda)$};
    \addplot[domain=0:1,dashed]{6*x*(1-x)} node[above,pos=0.53] {$2c^{n-1}x(1-x)$};
  \end{axis}
\end{tikzpicture}
\caption{Plot of the solution $f_c$ for $p=3,\lambda=\frac12,m=4,c=3$ and of the `ideal solution' at infinity}
\end{figure}
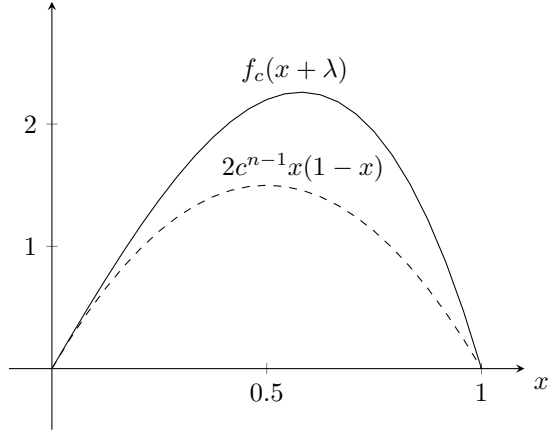

\section{Moment Map Setup}\label{sec:MomentMap}

In this section we give a moment map interpretation of the Existence Problem~\ref{EXproblem} inspired by the work of Apostolov and Maschler~\cite{Apostolov:2015aa}.
This leads to the familiar existence and uniqueness conjectures formulated in terms of geometric invariant theory, as well as to a version of the Futaki invariant.  Our method applies to closed symplectic manifolds $(M^m,\omega)$ admitting a symmetry given by a Hamiltonian vector field, meaning we assume $\mathcal{AC}^f(\omega)\neq \emptyset$ below. An example is a manifold with Hamiltonian circle action, as above.

First we recall the general definition ($\mathfrak{g}^*$ gets the coadjoint action):

\begin{defn}\label{def:momentmap}
A symplectic action of a Lie group on a symplectic manifold $(S,\omega)$ is \emph{Hamiltonian} if there exists a $G$-equivariant \emph{moment map} $\mu\colon X\to \mathfrak{g}^*$ with
\begin{equation}\label{eqn:momentmap}
	d\mu^\xi(X) = \omega(X,\xi^*)\qquad \forall \xi \in \mathfrak{g}, X\in TS.
\end{equation}
(write $\mu^\xi = \mu(-)(\xi) \in C^\infty(S)$ and let $\xi^* \in \mathfrak{X}(S)$ be the infinitesimal action.)
\end{defn}

\subsection{The action} 

For fixed $f\in C^\infty(M)$ with $\smallint_M f\vol = 0$ let
\begin{align}\label{notation}
u &\coloneqq e^{-nf},
&K&\coloneqq \grad_\omega u.
\end{align}

\begin{defn}
$\mathcal{AC}^f(\omega)$ is the Fr\'{e}chet manifold of $\omega$-compatible almost complex structures $J$ satisfying $\mathfrak{L}_K J = 0$.
\end{defn}




We equip $\mathcal{AC}^f(\omega)$ with the symplectic form
\begin{align}\label{FatSymplectic}
\mathbf{\Omega}_J(A,B) &= \frac12 \int_M \tr(J\circ A \circ B)e^{nf}\vol,\qquad A,B \in T_J \mathcal{AC}^f(\omega).
\end{align}


\begin{prop}\label{ACcontractible}
When non-empty, $\mathcal{AC}^f(\omega)$ is contractible. This is the case precisely when $K$ is the a Killing vector field for some metric $g$ on $M$.
\end{prop}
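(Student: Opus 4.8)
The plan is to separate the statement into the equivalence ``$\mathcal{AC}^f(\omega)\neq\emptyset \Leftrightarrow K$ is Killing for some metric'' and the contractibility, handling the equivalence first since its converse direction manufactures the base point needed for the contraction. Two structural facts will be used throughout. First, since $M$ is closed, $K=\grad_\omega u$ is complete and generates a global flow $\phi_t$; being Hamiltonian, $K$ is symplectic, $\mathfrak{L}_K\omega=0$. Second, because $\phi_t$ is a one-parameter group, $\frac{d}{dt}\phi_t^\ast J=\phi_t^\ast\mathfrak{L}_K J$, so $\mathfrak{L}_K J=0$ is equivalent to $\phi_t^\ast J=J$ for all $t$. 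Hence $\mathcal{AC}^f(\omega)$ is exactly the fixed-point set of the $\phi_t$-action on the space $\mathcal{J}(\omega)$ of all $\omega$-compatible almost complex structures.

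For the forward implication I would take any $J\in\mathcal{AC}^f(\omega)$ and use that $\mathfrak{L}_K$ is a derivation commuting with contractions, so that
\[
\mathfrak{L}_K g_J = (\mathfrak{L}_K\omega)(\cdot,J\cdot)+\omega(\cdot,(\mathfrak{L}_K J)\cdot)=0,
\]
which shows that $K$ is Killing for $g=g_J$. For the converse I would start from a metric $g$ with $\mathfrak{L}_K g=0$ and invoke the polar-decomposition construction: writing $\omega(\cdot,\cdot)=g(A\cdot,\cdot)$ for the invertible $g$-skew endomorphism $A$, the endomorphism $J:=(-A^2)^{-1/2}A$ is an $\omega$-compatible almost complex structure built \emph{naturally} from the pair $(g,\omega)$. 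Since $\phi_t$ preserves both $g$ and $\omega$ it preserves $A$, hence its canonical square root, hence $J$; thus $\phi_t^\ast J=J$ and $J\in\mathcal{AC}^f(\omega)$. Note that one only needs the existence of \emph{some} Killing metric, not that $g=g_J$.

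For contractibility I would use that the fiber of $\mathcal{J}(TM,\omega)\to M$ over $p$ is the Siegel space of $\omega_p$-compatible linear complex structures on $T_pM$, a contractible symmetric space of noncompact type carrying a canonical $\mathrm{Sp}(T_pM,\omega_p)$-invariant Hadamard metric. Fixing the base point $J_0\in\mathcal{AC}^f(\omega)$ produced above, the fiberwise geodesic retraction, sending $J$ to the section $H_s(J)$ whose value at $p$ lies at parameter $s$ on the unique geodesic from $J_p$ to $(J_0)_p$, is a smooth contraction of $\mathcal{J}(\omega)$ onto $J_0$. The decisive point is that $H_s$ is natural under symplectomorphisms: $d\phi_t$ identifies the Siegel fibers isometrically and carries geodesics to geodesics, so $\phi_t^\ast H_s(J)=H_s(\phi_t^\ast J)$; as $J_0$ is $\phi_t$-invariant, $H_s$ preserves the fixed-point set and restricts to a contraction of $\mathcal{AC}^f(\omega)$.

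The main obstacle I anticipate is not any isolated computation but verifying that the standard contraction can be chosen $\phi_t$-equivariantly so that it descends to the fixed-point set; this rests on the naturality under linear symplectomorphisms of both the polar decomposition and the symmetric-space geodesics, together with the completeness of $K$ guaranteed by compactness of $M$. One should also confirm the Fr\'echet-manifold regularity, namely smooth dependence of the geodesics on their endpoints and of the resulting sections on the base, which is routine given the Hadamard structure of the fibers.
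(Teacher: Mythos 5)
Your proposal is correct, and the first half runs along the same lines as the paper: the nonemptiness equivalence is exactly the paper's mechanism, namely the polar decomposition $g\mapsto J_g=A_g|A_g|^{-1}$, $g(A_g-,-)=\omega$, applied to any $K$-invariant metric, together with the observation that $g_J=\omega(\cdot,J\cdot)$ is $K$-invariant whenever $\mathfrak{L}_KJ=0$ and $\mathfrak{L}_K\omega=0$. Where you genuinely diverge is the contractibility step. The paper's (one-line) argument stays inside the space $\mathfrak{Met}^f(M)$ of $K$-invariant metrics: this space is \emph{convex}, hence contractible, and the polar-decomposition map $\mathfrak{Met}^f(M)\to\mathcal{AC}^f(\omega)$ is a retraction (it is the identity on the image of $J\mapsto g_J$), so $\mathcal{AC}^f(\omega)$ is a retract of a contractible space and hence contractible; the only regularity needed is continuity of an affine homotopy and of the polar map. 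You instead contract $\mathcal{AC}^f(\omega)$ intrinsically, as the $\phi_t$-fixed-point set inside $\mathcal{J}(\omega)$, using the fiberwise Cartan--Hadamard geodesic retraction of the Siegel fibers onto the base point $J_0$ and the naturality of that retraction under linear symplectomorphisms. This is heavier machinery (symmetric-space structure, unique geodesics, smooth dependence on endpoints, equivariance), but it buys something the paper's argument does not make visible: the contraction is canonical once $J_0$ is fixed and is equivariant under the whole flow (indeed under any symplectomorphism fixing $J_0$ and $f$), which meshes well with the moment-map framework of this section. Both arguments are sound; the convexity route is the more economical one, and your flagged regularity issues (smoothness of geodesics in the Fr\'echet setting) are real but routine, and are no worse than the continuity issues the paper's proof silently assumes.
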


\begin{proof}
The usual map restricts to a retraction
\[
	\mathfrak{Met}^f(M) \to \mathcal{AC}^f(\omega),\quad
	g\mapsto J_g,\enskip\text{where}\enskip
	J_g\coloneqq A_g|A_g|^{-1},\enskip g(A_g-,-)\coloneqq\omega
\]
on the convex space of metrics $g$ satisfying $\mathfrak{L}_K g = 0$.
\end{proof}

\begin{rem}
As mentioned above, we make the \emph{assumption} that $\mathcal{AC}^f(\omega)$ is non-empty. According to Bochner, there are no non-trivial Killing fields in the case of strictly negative Ricci curvature. We have $s^H = 2|N|^2 + s^g$ and so the results of this section mainly concern the case of positive fundamental constant.
\end{rem}

\begin{defn}\label{def:Ham}
$\Ham^f(\omega) \subset \mathrm{Symp}(\omega)$ is the subgroup of Hamiltonian symplectomorphisms $\phi$ satisfying $f\circ \phi = f$ (equivalently $\phi_*$ preserves $\grad_\omega f$). 
\end{defn}

We recall that by definition the Lie algebra of the Hamiltonian symplectomorphisms are the Hamiltonian vector fields $d\varphi=-\iota_X\omega$, where $\varphi\in C^\infty(M)$.
The Lie algebra $\LieHam$ consists of Hamiltonian vector fields with ${[\grad_\omega f,X]=0}$. Let $C^\infty_f(M)$ be the space of $\varphi\in C^\infty(M)$ with constant Poisson bracket $\{f,\varphi\}$. Then $\LieHam$ is canonically identified with $C^\infty_f(M)/\mathbb{R}$.
The adjoint action of $\phi\in \Ham^f(\omega)$ on $\varphi \in C_{f}^\infty(M)/\mathbb{R}$ can be written $(\phi^{-1})^*\varphi$. On $C^\infty_{f}(M)$ consider
\begin{equation}\label{inner_product}
\langle h_1,h_2 \rangle_{e^{(2+n)f}} = \int_M h_1h_2e^{(2+n)f}\vol.
\end{equation}
It places $C^\infty_f(M)/\mathbb{R}$ in duality with ${C^\infty_{0,f}(M)\coloneqq\{\varphi\in C^\infty_f(M)\mid \langle \varphi,1\rangle_{e^{(2+n)f}} = 0\}}$. 
We have an isomorphism to $C^\infty_{0,f}(M)\to C^\infty_f(M)/\mathbb{R}$ with inverse
\[
C^\infty_f(M)/\mathbb{R}\to C^\infty_{0,f}(M),\quad
	\varphi\mapsto \mathring{\varphi} \coloneqq \varphi - \frac{\langle \varphi,1\rangle_{e^{(2+n)f}}}{\langle1,1\rangle_{e^{(2+n)f}}}
	=
	\varphi - \frac{\smallint_M \varphi e^{(2+n)f}\vol}{\smallint_M e^{(2+n)f}\vol}.
\]
For $\varphi,\psi \in C^\infty_f(M)$ note the formula
\begin{equation}\label{dotFlip}
\langle \mathring{\varphi},\psi \rangle_{e^{(2+n)f}}=\langle \varphi, \mathring{\psi}\rangle_{e^{(2+n)f}}.
\end{equation}

Since $e^{nf}\vol$ is preserved by $\phi$, the action of $\Ham^f(\omega)$ on $\mathcal{AC}^f(\omega)$ by ${\phi_*\circ J\circ \phi_*^{-1}}$ preserves the symplectic form \eqref{FatSymplectic}. We will show that it is Hamiltonian.

\subsection{Technical preparations}

\begin{lem}[see~{\cite[Lemma~1.3]{lejmi2010metriques}}]\label{lem:Killing}
Let $(J,g,\omega)$ be almost K\"ahler.
Suppose the symplectic gradient $K=\grad_\omega u$ is a $g$-Killing field.
Then the $J$-anti-invariant part $(D^g Jdu^\sharp)^{J,-}$ is anti-symmetric.
\end{lem}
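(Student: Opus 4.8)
The plan is to exploit the defining property of an almost Kähler manifold, namely $\omega$ is closed, together with the Killing condition on $K=\grad_\omega u$, and translate everything into a statement about the symmetric versus anti-symmetric parts of the endomorphism $D^g J du^\sharp$. First I would fix notation: write $X\coloneqq du^\sharp = \grad_g u$ and note that $K = JX$ up to sign, since $\iota_K\omega = du$ and $\omega = g(J\cdot,\cdot)$ give $K = \grad_\omega u = -J\grad_g u$ (or the analogous identity with the paper's sign conventions). Thus the Killing condition $\mathfrak{L}_K g = 0$ is a condition on $JX$. The Killing equation says that $D^g(JX)$, viewed as a $(1,1)$-tensor via the metric, is \emph{anti-symmetric}. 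The object I must control, $D^g J du^\sharp$, differs from $D^g(JX)$ by the term involving $(D^g J)X$ because of the Leibniz rule $D^g(JX) = (D^g J)X + J D^g X$.

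The key steps, in order, are as follows. First, expand $D^g(JX) = (D^g J)(X) + J\circ(D^g X)$ using that $D^g$ is a derivation. Second, take the $J$-anti-invariant part of this identity. Here I would use the almost Kähler hypothesis: when $dF = 0$ the covariant derivative $D^g J$ is governed entirely by the Nijenhuis tensor, and in particular $(D^g J)$ is itself $J$-anti-invariant in the appropriate sense, so $(D^g J)(X)$ contributes only to the anti-invariant part. Likewise $J\circ D^g X$: since $D^g X = D^g\grad_g u$ is the Hessian of $u$, which is \emph{symmetric}, precomposing with $J$ and extracting the $J$-anti-invariant piece must be analyzed. Third, I would invoke the Killing condition to conclude that the total $D^g(JX)$ has vanishing symmetric part (it is anti-symmetric as a $2$-tensor), and then read off what this forces on $(D^g J du^\sharp)^{J,-}$.

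The cleanest route is probably to work with the splitting of a general endomorphism into its $J$-invariant and $J$-anti-invariant parts, and within each of those, into symmetric and anti-symmetric parts. The Killing equation kills the symmetric part of $D^g(JX)$ as a whole. On the $J$-anti-invariant block, the Hessian term $J\circ\mathrm{Hess}(u)$ and the term $(D^g J)(X)$ interact, and I expect that the symmetric part of the $J$-anti-invariant component of $J\circ\mathrm{Hess}(u)$ is exactly cancelled or controlled by the Killing condition, leaving $(D^g J du^\sharp)^{J,-}$ anti-symmetric. I would lean on the standard almost Kähler identity expressing $D^g J$ through $N$ (which, from the excerpt, satisfies $T = N$, $\theta = 0$, $t = 0$) to pin down the symmetry type of $(D^g J)(X)$.

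The main obstacle I anticipate is the bookkeeping of symmetry types under the four-fold decomposition (invariant/anti-invariant $\times$ symmetric/anti-symmetric), and specifically verifying that the $J$-anti-invariant symmetric part of $J\circ\mathrm{Hess}(u)$ vanishes. Since the Hessian is symmetric but $J\circ(\text{symmetric})$ need not be symmetric nor have controlled type, the crux is a pointwise linear-algebra lemma: for a symmetric $S$ commuting appropriately with $g$, the $J$-anti-invariant part of $JS$ is anti-symmetric. Establishing that cleanly — likely by choosing a $J$-adapted orthonormal frame as in \eqref{Jadapted} and computing components — is where the real work lies; the rest is assembling these pieces with the Killing equation. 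Given the reference to \cite{lejmi2010metriques}, I would expect the argument to mirror that source, extracting the anti-symmetry by a direct index computation in such a frame.
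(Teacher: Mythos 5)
Your proposal breaks down at exactly the step you identify as the crux. For a $g$-symmetric endomorphism $S$ and a $g$-orthogonal almost complex structure $J$ (so $J^*=-J$, $S^*=S$), the $J$-anti-invariant part of $JS$ is
\[
(JS)^{J,-}=\tfrac12\bigl(JS+J(JS)J\bigr)=\tfrac12\bigl(JS-SJ\bigr)=\tfrac12[J,S],
\qquad
[J,S]^*=S^*J^*-J^*S^*=[J,S],
\]
so $(JS)^{J,-}$ is \emph{symmetric}, not anti-symmetric. Hence the ``pointwise linear-algebra lemma'' you plan to establish for $S=\mathrm{Hess}(u)$ is false: $(J\circ\mathrm{Hess}(u))^{J,-}$ is anti-symmetric only when it vanishes, i.e.\ when $\mathrm{Hess}(u)$ commutes with $J$ --- which is the \emph{K\"ahler} characterization of $K$ being Killing and is not available in the almost K\"ahler case. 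What actually happens is that the symmetric part of the term $(D^gJ)(du^\sharp)$, which you hoped to set aside as harmless because it lies in the $J$-anti-invariant block, equals $-\tfrac12[J,\mathrm{Hess}(u)]$ by the Killing equation; neither piece vanishes separately, only their sum does. So isolating the Hessian term and proving its anti-symmetry pointwise is hopeless, and the ``cancellation'' you anticipate is nothing but the Killing equation for the sum, which makes the whole Leibniz decomposition circular.

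The second issue is a misreading of the object. In the lemma, and in its later use (see the expansion \eqref{Leibniz} and the sentence following it), $D^gJdu^\sharp$ denotes $D^g(Jdu^\sharp)=D^gK$, the full covariant derivative of the vector field $K=Jdu^\sharp$ itself --- not $J\circ\mathrm{Hess}(u)$. With this reading your third step already finishes the argument with no decomposition at all: the Killing equation says precisely that $D^gK$ is $g$-anti-symmetric, and conjugation by $J$ preserves anti-symmetry, since $(JAJ)^*=(-J)(-A)(-J)=-JAJ$ when $A^*=-A$; therefore $(D^gK)^{J,-}=\tfrac12\bigl(D^gK+J(D^gK)J\bigr)$ is anti-symmetric. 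The paper's own proof takes yet another route and obtains more: since $\mathfrak{L}_K\omega=0$ is automatic, $K$ is Killing iff $\mathfrak{L}_KJ=0$; torsion-freeness of $D^g$ gives $0=\mathfrak{L}_KJ=D^g_KJ-[D^gK,J]$, whence
\[
(D^gK)^{J,-}=\tfrac12J[D^gK,J]=\tfrac12JD^g_KJ=-\tfrac12D^g_{JK}J
\]
by the almost K\"ahler identity $D^g_{JX}J=-JD^g_XJ$; anti-symmetry is then manifest because every $D^g_XJ$ is anti-symmetric. That explicit formula (not just the bare anti-symmetry) is the real content of the cited Lemma~1.3, and it is the only place where the almost K\"ahler hypothesis genuinely enters.
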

\begin{proof}
Because $\mathfrak{L}_K \omega = 0$ is automatic, the field $K=Jdu^\sharp$ is Killing precisely when it is holomorphic. Therefore, combined with the fact that $D^g$ is torsion-free,
\[
	0=\mathfrak{L}_K J = D_{K}^gJ - [D^gK,J].
\]
So $(D^g K)^{J,-} = \frac12J[D^gK,J] = \frac12 JD^g_KJ = -\frac{1}{2} D^g_{JK}J$ which is anti-symmetric.
\end{proof}


Consider a path $J_t \in \mathcal{AC}(\omega)$ representing $\dot{J} = \left.\frac{d}{dt}\right|_0 J_t$. Write $g_t = \omega(\cdot,J_t\cdot)$. The variation of the scalar curvature is given by the Mohsen formula:

\begin{prop}[see~\cite{mohsen}]\label{mohsenprop}
$\left.\frac{d}{dt}\right|_0 s^H_{g_t} = - \delta J (\delta \dot{J})^\flat$.
\end{prop}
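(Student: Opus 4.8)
The plan is to reduce the variation of $s^H=2s^C$ to the variation of the first Chern--Ricci form $\rho$, and then to recognize the latter as a double divergence. Since the symplectic form $F=\omega$ is fixed along the path $J_t\in\mathcal{AC}(\omega)$, so is the volume form $\vol=\omega^n/n!$, and the defining property of the Lefschetz trace gives the pointwise identity
\[
	s^C\,\vol = \rho\wedge\frac{\omega^{n-1}}{(n-1)!}.
\]
Differentiating at $t=0$, where the right-hand side varies only through $\rho$, yields $\left.\frac{d}{dt}\right|_0 s^H\,\vol = 2\,\dot\rho\wedge\frac{\omega^{n-1}}{(n-1)!}$, that is $\left.\frac{d}{dt}\right|_0 s^H = 2\Lambda(\dot\rho)$. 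Thus everything comes down to computing $\dot\rho$ and taking its Lefschetz trace. Recall also that $\dot J$ is $g$-symmetric and anti-commutes with $J$, so it is naturally a $TM$-valued $1$-form.

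Next I would compute $\dot\rho$. Up to a factor of $i$, the form $\rho$ is the curvature of the Chern connection induced on the anticanonical line bundle $\Lambda^n_{\mathbb{C}}T^{1,0}$. The difficulty is that the subbundle $T^{1,0}_t$ itself moves with $t$; I would handle this on the fixed bundle $TM\otimes\mathbb{C}$ using the projectors $\pi_t=\tfrac12(1-iJ_t)$, whose derivative is $\dot\pi=-\tfrac{i}{2}\dot J$. Differentiating the explicit description of the Chern connection in Lemma~\ref{Lem:ChernConnection} produces the variation $\dot\Gamma$ of the connection form as an $\End_{\mathbb{C}}(TM\otimes\mathbb{C})$-valued $1$-form linear in $\dot J$. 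Because the first Ricci form is a trace of curvature, and the trace of $d^{\nabla}\dot\Gamma$ is $d(\tr_{\mathbb{C}}\dot\Gamma)$, one gets
\[
	\dot\rho = d\beta,\qquad \beta\coloneqq i\,\tr_{\mathbb{C}}(\dot\Gamma),
\]
an exact real $2$-form. This already re-proves $\int_M\left.\frac{d}{dt}\right|_0 s^H\,\vol=0$, consistent with $\int_M s^H\,\vol$ being the topological quantity proportional to $c_1\cdot[\omega]^{n-1}$.

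It then remains to identify $\beta$ and compute $\Lambda(d\beta)$. The key tool is the almost-K\"ahler analogue of the K\"ahler identity for $[\Lambda,d]$, namely $\Lambda(d\beta)=\delta^g(J\beta)$ for a $1$-form $\beta$, valid modulo a torsion term proportional to $\theta$; this is in force because $\mathcal{AC}(\omega)$ consists of $\omega$-compatible structures, so $dF=0$ and $\theta=0$ (the special case $\beta=df$ is precisely the Lemma preceding \eqref{ChernLaplacian}). Tracing out the explicit $\dot\Gamma$ and using $\nabla g=\nabla J=0$ together with the torsion decomposition \eqref{eqn:torsion}, I expect $\beta$ to reduce, modulo $1$-forms in the kernel of $\Lambda\circ d$, to $-\tfrac12(\delta\dot J)^\flat$, where $\delta\dot J\in\Gamma(TM)$ is the covariant divergence of $\dot J$ regarded as a $TM$-valued $1$-form and $\flat$ is the musical isomorphism. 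Combining the two displayed identities then gives $\left.\frac{d}{dt}\right|_0 s^H=2\Lambda(d\beta)=2\delta^g\bigl(J(-\tfrac12(\delta\dot J)^\flat)\bigr)=-\delta J(\delta\dot J)^\flat$, as claimed.

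The main obstacle is the computation of $\dot\rho$: one must differentiate the Chern connection while the $(1,0)$-subbundle moves, and then show that the non-integrability (torsion) contributions appearing in the trace either cancel or are of the form $\Lambda\circ d$ of an exact $1$-form, so that after taking the Lefschetz trace only the clean double-divergence term $-\delta J(\delta\dot J)^\flat$ survives. A useful simplification throughout is to fix a point $p$ and work in a $J$-adapted frame that is $\nabla$-parallel (equivalently $D^g$-normal) at $p$, exactly as in the proofs of Propositions~\ref{prop:ScalarComparision} and~\ref{prop:ScalarComparision1}, reducing the tensorial identities to manageable pointwise computations.
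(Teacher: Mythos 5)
Your reduction steps are fine: since the path stays in $\mathcal{AC}(\omega)$ the volume form is fixed, so $\left.\frac{d}{dt}\right|_0 s^H = 2\Lambda(\dot\rho)$ is correct; the variation $\dot\rho$ is indeed exact (the $\rho_t$ all lie in $2\pi c_1$); and the identity $\Lambda(d\beta)=\delta^g(J\beta)$ for $1$-forms does hold when $\theta=0$, so its use here is legitimate. The problem is that the entire analytic content of the proposition is concentrated in the one step you do not carry out: the identification of $\beta=i\,\tr_{\mathbb{C}}(\dot\Gamma)$ with $-\tfrac12(\delta\dot J)^\flat$ modulo $\ker(\Lambda\circ d)$. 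You write ``I expect $\beta$ to reduce\dots'' --- but this expectation \emph{is} Mohsen's formula; everything surrounding it is soft. Moreover, the computation you defer is not routine: differentiating Lemma~\ref{Lem:ChernConnection} in $t$ involves not only the moving subbundle $T^{1,0}_t$ (your projectors $\pi_t$) but also the varying Hermitian form $h_t$ coming from $g_t=\omega(\cdot,J_t\cdot)$, and one must then show that all torsion contributions in $\tr_{\mathbb{C}}(\dot\Gamma)$ either cancel or lie in $\ker(\Lambda\circ d)$. Finally, the coefficient $-\tfrac12$ is reverse-engineered from the statement being proved rather than derived, so the argument as written cannot certify the formula (in particular it could not detect a sign or factor error).

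For comparison: the paper itself offers no proof of this proposition --- it is quoted from Mohsen's work and used as a black box (the derivative formula \eqref{gLimit}, Lemma~\ref{lem:Killing}, and the codifferential convention \eqref{codifferentialEndo} are the paper's only surrounding apparatus). So a blind proof here genuinely requires doing the connection-variation computation in full; your outline chooses a reasonable route (it is essentially the standard derivation via the anticanonical bundle, in the spirit of Donaldson and Gauduchon), but as it stands it is a plan with the decisive calculation missing, not a proof.
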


In this formula the codifferential of an endomorphism $A$ is defined by
\begin{align}
	g(\delta A, X)=\delta \langle A, X\rangle + g(A,D^g X),
	\quad X\in \mathfrak{X}(M)
	\label{codifferentialEndo}
\end{align}
using the evaluation pairing $\langle,\rangle$.
For $1$-forms $\alpha,\beta$ we note also the simple formulas
\begin{align}
g(\alpha^\sharp, A(X)) &= g(A^*, \alpha\otimes X),\label{gAdjoint}\\
\left.\frac{d}{dt}\right|_0g_t(\alpha,\beta) &= -g(\alpha,\dot{J}J\beta),\label{gLimit}
\end{align}
which are used in the proof of our main technical lemma:

\begin{lem}
For the metrics $\tilde{g}_t = e^{2f}\omega(\cdot,J_t\cdot)$ and any $h \in C^\infty(M)$ we have
\begin{align}
\left.\frac{d}{dt}\right|_0 \int_M s^H_{\tilde{g}_t} h e^{(2+n)f} \vol
= \int_M g(\dot{J}, D^g Jdh^\sharp) e^{nf}\vol.\label{Moment1}
\end{align}
\end{lem}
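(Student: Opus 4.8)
The plan is to differentiate the integral directly, bringing the $t$-derivative inside, and to handle the two $t$-dependent factors — the scalar curvature $s^H_{\tilde g_t}$ and the volume element (which is $t$-independent, since $\tilde\omega = e^{2f}\omega$ does not depend on $J_t$) — separately. The key observation is that $\tilde g_t = e^{2f}\omega(\cdot,J_t\cdot)$ is a conformal rescaling of $g_t = \omega(\cdot,J_t\cdot)$ with a \emph{fixed} conformal factor, so I can relate $s^H_{\tilde g_t}$ to $s^H_{g_t}$ via the conformal transformation formula \eqref{Eqn:HermScal}. Writing $m=2n$ and using $e^{2f}\tilde s^H_{\tilde g_t} = s^H_{g_t} + m\Delta^{g_t} f + mg_t(\theta_t, df)$, the terms $\Delta^{g_t}f$ and $g_t(\theta_t,df)$ also depend on $t$, but the plan is to differentiate the whole expression and use Mohsen's formula (Proposition~\ref{mohsenprop}) for $\left.\tfrac{d}{dt}\right|_0 s^H_{g_t} = -\delta J(\delta\dot J)^\flat$.

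First I would reduce to computing $\left.\tfrac{d}{dt}\right|_0 \int_M s^H_{g_t}\, h\, e^{nf}\vol$ by absorbing the conformal factor: since $\tilde s^H_{\tilde g_t}\, e^{(2+n)f} = (s^H_{g_t} + m\Delta^{g_t}f + mg_t(\theta_t,df))\,e^{nf}$, the weight $e^{(2+n)f}$ against $\tilde s^H$ becomes the weight $e^{nf}$ against the bracketed quantity. The correction terms $m\Delta^{g_t}f + mg_t(\theta_t,df) = mL^{g_t}(f)$ involve the complex Laplacian, whose $t$-derivative I would compute and then integrate against $h\,e^{nf}\vol$; the hope is that, after integration by parts using the weighted measure $e^{nf}\vol$, these correction contributions either cancel or reorganize into the stated form. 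Then I would apply Mohsen's formula to the leading term and integrate by parts twice, moving the two codifferentials $\delta$ off $\dot J$ and onto the test data. Using the definition \eqref{codifferentialEndo} of the endomorphism codifferential, the pairing $\int_M (\delta J(\delta\dot J)^\flat)\, h\, e^{nf}\vol$ should transform into $\int_M g(\dot J, D^g(Jdh^\sharp))\,e^{nf}\vol$ after accounting for the weight $e^{nf}$ (which is where the derivatives of $f$, and hence the compatibility with the correction terms, must conspire).

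The main obstacle I anticipate is bookkeeping the derivatives of the weight $e^{nf}$ through the integration by parts: because the measure is $e^{nf}\vol$ rather than $\vol$, every application of $\delta$ produces extra terms proportional to $df$, and these must match exactly against the $t$-derivative of the conformal correction $mL^{g_t}(f)$. The cleanest route is probably to observe that $\delta(e^{nf}\,\cdot\,) $ interacts with $K=\grad_\omega u = \grad_\omega e^{-nf}$, so that the hypothesis $\mathfrak L_K J = 0$ defining $\mathcal{AC}^f(\omega)$ (and Lemma~\ref{lem:Killing}) forces the unwanted terms to vanish or to combine into the Hessian-type expression $D^g(Jdh^\sharp)$. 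I would therefore carry out the integration by parts with respect to the modified codifferential adapted to the measure $e^{nf}\vol$, so that the weight is absorbed from the start and the final pairing reads off directly as $\int_M g(\dot J, D^g Jdh^\sharp)\,e^{nf}\vol$.
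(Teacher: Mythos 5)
Your plan is correct and essentially coincides with the paper's proof: it uses the conformal formula \eqref{Eqn:HermScal}, Proposition~\ref{mohsenprop} together with \eqref{gLimit} for the $t$-derivatives, two integrations by parts via \eqref{codifferentialEndo} and \eqref{gAdjoint}, and the symmetry of $\dot{J}$ plus Lemma~\ref{lem:Killing} to cancel the leftover weight terms. The one point you leave to ``hope'' resolves itself at once: every $g_t=\omega(\cdot,J_t\cdot)$ is almost K\"ahler, so $\theta_t\equiv 0$ and the torsion correction never needs to be varied, and the paper avoids differentiating the operator $\Delta^{g_t}$ altogether by integrating by parts first, so that only the metric pairing on $1$-forms depends on $t$.
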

\begin{proof}
Recall $u:=e^{-nf}$. By \eqref{Eqn:HermScal} the scalar curvature of the conformal variation is
\[
	s^H_{\tilde{g}_t} = e^{-2f}\left(s^H_{g_t} + m\Delta^{g_t}(f)\right)
	= u^{2/n} s^H_{g_t}  - 2u^{2/n-1}\Delta^{g_t}(u) - 2u^{2/n-2}|du|^2_{g_t}.
\]
Putting this and $e^{(2+n)f} = u^{-1-2/n}$ into the left hand side of \eqref{Moment1} gives
\begin{align*}
\left.\frac{d}{dt}\right|_0 \int_M s^H_{g_t} h u^{-1} \vol 
- 2 \int_M \Delta^{g_t}(u) hu^{-2}\vol
- 2\int_M g_t(du,du) hu^{-3}\vol.
\end{align*}
Applying Proposition~\ref{mohsenprop} and \eqref{gLimit} to the second and third summand we get
\begin{align*}
-\int_M \delta J(\delta \dot{J})^\flat hu^{-1}\vol + 2\int_M g(du, \dot{J}Jd(hu^{-2})) \vol
+ 2\int_M g\left(du, \dot{J}Jdu \right)hu^{-3} \vol
\end{align*}
which, in view of \eqref{codifferentialEndo} and \eqref{gAdjoint} becomes
\begin{align}\label{tempMoment}
\int g\left(\dot{J}, D^g J d(hu^{-1})^\sharp + 2 du\otimes Jd(hu^{-2})^\sharp + 2hu^{-3} du\otimes Jdu^\sharp \right) \vol.
\end{align}
Now expand using the Leibniz rule:
\begin{equation}\label{Leibniz}
\begin{aligned}
D^g Jd(hu^{-1})^\sharp=&2u^{-3}h du\otimes Jdu^\sharp - u^{-2} du\otimes Jdh^\sharp - u^{-2}dh\otimes Jdu^\sharp\\& + u^{-1}D^g(Jdh^\sharp) - u^{-2}h D^gJdu^\sharp\\
du\otimes Jd(hu^{-2})^\sharp =& u^{-2}du\otimes Jdh^\sharp - 2u^{-3} h du\otimes Jdu^\sharp
\end{aligned}
\end{equation}
From \eqref{gAdjoint} we see $g(\dot{J}, du\otimes Jdh^\sharp) = g(\dot{J}, dh\otimes Jdu^\sharp)$. Moreover, Lemma~\ref{lem:Killing} implies $g(\dot{J}, D^g Jdu^\sharp)=0$ since $\dot{J}$ is symmetric and $J$-anti-invariant, while the $J$-anti-invariant part of $D^g Jdu^\sharp$ is anti-symmetric. Inserting \eqref{Leibniz} into \eqref{tempMoment} and applying these facts then gives the right hand side of \eqref{Moment1}.
\end{proof}

\subsection{Proof of main theorem}
We write $g_{f,J}\coloneqq e^{2f}\omega(\cdot,J\cdot)$ and $g_J \coloneqq g_{0,J}$.

\begin{thm}\label{thm:momentmap}
Let $(M,\omega)$ be a closed symplectic manifold with Hamiltonian vector field ${K=\grad_\omega u}$, ${u=e^{-nf}}$, ${\smallint_M f\vol = 0}$, and $\mathcal{AC}^f(\omega) \neq \emptyset$.
The action of $\mathrm{Ham}^f(\omega)$ on $\mathcal{AC}^f(\omega)$ with symplectic form \eqref{FatSymplectic} is Hamiltonian with moment map
\begin{align}\label{OurMomentMap}
\mu\colon \mathcal{AC}^f(\omega) \times C^\infty_{0,f}(M) \to \mathbb{R},\quad
\mu^{\mathring{h}}(J) = \int_M s_{g_{f,J}}^H \mathring{h} e^{(2+n)f}\vol.
%
\end{align}
Here the Hermitian scalar curvature of $g_{f,J}$
is viewed as a functional using \eqref{inner_product}.
\end{thm}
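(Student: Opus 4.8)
The plan is to verify the two defining properties of a moment map: the infinitesimal equation $d\mu^{\mathring h}(A)=\mathbf{\Omega}_J(A,\xi^*)$ for every $h\in C^\infty_f(M)$ and every $A\in T_J\mathcal{AC}^f(\omega)$, together with equivariance of $\mu$. First I would identify the infinitesimal action. An element $h\in C^\infty_f(M)$ generates the Hamiltonian field $X_h=\grad_\omega h=Jdh^\sharp$, whose flow acts by symplectomorphisms preserving $\grad_\omega f$ (since $\{f,h\}$ is constant), hence preserves $\mathcal{AC}^f(\omega)$; its generator is the tangent vector $\xi^*=-\mathfrak{L}_{X_h}J$, a $g$-symmetric, $J$-anti-invariant endomorphism. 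Choosing a path $J_t$ with $\dot J_0=A$, the left-hand side is computed directly by the main technical Lemma~\eqref{Moment1}: since $\mathring h$ and $h$ differ by a constant, whose differential vanishes,
\[
d\mu^{\mathring h}(A)=\left.\tfrac{d}{dt}\right|_0\int_M s^H_{g_{f,J_t}}\mathring h\,e^{(2+n)f}\vol=\int_M g\bigl(A,\,D^g Jdh^\sharp\bigr)\,e^{nf}\vol.
\]

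For the right-hand side I would first rewrite the symplectic pairing. As $A$ and $\xi^*$ are $g$-symmetric and anti-commute with $J$, cyclicity of the trace together with $J^\top=-J$ gives $\tfrac12\tr(JA\,\xi^*)=\tfrac12 g(A,\xi^* J)$, so $\mathbf{\Omega}_J(A,\xi^*)=\tfrac12\int_M g(A,\xi^* J)\,e^{nf}\vol$. Since $A$ ranges over all symmetric $J$-anti-invariant endomorphisms, and such endomorphisms are $g$-orthogonal both to anti-symmetric tensors and to $J$-invariant symmetric tensors, the moment map equation is equivalent to the weight-independent pointwise identity that the symmetric $J$-anti-invariant part of $\xi^* J=-(\mathfrak{L}_{X_h}J)J$ equals that of $2\,D^g(Jdh^\sharp)$. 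The heart of the proof is therefore to expand $\mathfrak{L}_{X_h}J=D^g_{X_h}J-[D^gX_h,J]$ using torsion-freeness of $D^g$, substitute $X_h=Jdh^\sharp$, and check this identity. Here I expect to use the almost K\"ahler condition $d\omega=0$ to control the $D^g_{X_h}J$ contributions and the orthogonality of the type decomposition (in the spirit of Lemma~\ref{lem:Killing}) to discard the $J$-invariant and anti-symmetric pieces that pair to zero against $A$. This matching is the main obstacle; everything else is formal.

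Finally, I would establish equivariance by a change of variables. For $\phi\in\mathrm{Ham}^f(\omega)$ the Hermitian scalar curvature is natural under the induced isometry, so $s^H_{g_{f,\phi\cdot J}}=s^H_{g_{f,J}}\circ\phi^{-1}$; moreover $f\circ\phi=f$ and $\phi^*\vol=\vol$ because $\phi$ is an $f$-preserving symplectomorphism, while the adjoint action on $C^\infty_{0,f}(M)$ is by $(\phi^{-1})^*$. Substituting $y=\phi^{-1}(x)$ then yields $\mu^{(\phi^{-1})^*\mathring h}(\phi\cdot J)=\mu^{\mathring h}(J)$, the required equivariance. This part is routine and completes the verification.
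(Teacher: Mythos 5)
Your plan follows the paper's own proof very closely: the same two checks (infinitesimal moment-map identity plus equivariance), the same identification of the infinitesimal action as $\xi^*=-\mathfrak{L}_{X_h}J$ with $X_h=Jdh^\sharp$, the same use of the technical lemma \eqref{Moment1} to compute the $d\mu$ side (your remark that $h$ and $\mathring h$ differ by a constant is the right justification), and the same naturality argument for equivariance. The one place you diverge is how the pairing side is matched against \eqref{Moment1}, and that is exactly the step you leave open.

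What you call ``the main obstacle'' is, in the paper, a single closed-form identity: for $Z=\grad_\omega h=Jdh^\sharp$ one has $h^*_J=-\mathfrak{L}_Z J=-J\circ\bigl(D^gZ+(D^gZ)^*\bigr)$, after which $\mathbf{\Omega}_J(h^*_J,\dot J)=\int_M \tr(D^gZ\circ\dot J)\,e^{nf}\vol$ follows from a two-line trace manipulation of \eqref{FatSymplectic}; no type decomposition or orthogonality argument is needed, since the right-hand side is already symmetric and $J$-anti-invariant. The identity is proved exactly by the expansion you propose: torsion-freeness gives $\mathfrak{L}_ZJ=D^g_ZJ-[D^gZ,J]$, and $\mathfrak{L}_Z\omega=0$ (automatic for a Hamiltonian field on a symplectic manifold) gives $D^g_ZJ=-J\circ D^gZ-(D^gZ)^*\circ J$; substitution finishes it. So the gap is fillable, but note two defects in your formulation. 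First, a sign: carrying out the expansion with the paper's conventions yields $\xi^*J=-\bigl(D^gZ+(D^gZ)^*\bigr)$, whose symmetric $J$-anti-invariant part is the \emph{negative} of that of $2D^g(Jdh^\sharp)$; equivalently, what one actually verifies is $d\mu^{\mathring h}(A)=\mathbf{\Omega}_J(\xi^*,A)$, with the infinitesimal action in the \emph{first} slot---which is how the paper writes the equation---whereas a literal reading of \eqref{eqn:momentmap} puts it in the second. This is an overall sign to be absorbed into the convention (or into $\mu$), but as stated your pointwise identity is false and you would hit this when checking it. Second, your claim that the integrated moment-map equation is \emph{equivalent} to the pointwise identity is an overstatement: tangent vectors $A\in T_J\mathcal{AC}^f(\omega)$ are constrained to be invariant under the flow of $K$, so they cannot be localized arbitrarily; only the sufficiency direction is available, which fortunately is all you use.
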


Identifying $\LieHam=C^\infty_f(M)/\mathbb{R}$ 
and using \eqref{dotFlip} we may rewrite
\begin{equation}\label{momentsecond}
	\mu\colon \mathcal{AC}^f(\omega) \to (C^\infty_f(M)/\mathbb{R})^*,\quad
	\mu(J)=\int_M \mathring{s}_{g_{f,J}}^H h e^{(2+n)f}\vol.
\end{equation}

\begin{proof}
We must check that for any tangent vector $\dot{J} \in T_J \mathcal{AC}^f$ and $h\in C^\infty_{0,f}(M)$
\[
	\mathbf{\Omega}_J(h^*_J,\dot{J}) = d\mu^h(\dot{J}).
\]
Here $h^*_J= -\mathfrak{L}_Z J  \in T_J \mathcal{AC}^f$ for $Z=\grad_\omega h$ denotes the infinitesimal action of $h$ at the point $J$. 
In terms of the adjoint of $D^g_\cdot Z \in \End(TM)$ with respect to $g=g_J$, the infinitesimal action can be rewritten as $h^*J = -J\circ (D^g Z + (D^g Z)^*)$ and so 
\begin{align*}
	\mathbf{\Omega}_J(h^*_J, \dot{J})
	&= \frac12\int_M \left(\tr(D^g Z \circ \dot{J}) + \tr((D^g Z)^*\circ \dot{J})\right) e^{nf}\vol\\
	&= \int_M \tr(D^gZ \circ \dot{J}) e^{nf}\vol.
\end{align*}
This is the right hand side of \eqref{Moment1}, as $Z=Jdh^\sharp$, while the left hand side of \eqref{Moment1} is simply $d\mu^h(\dot{J})$. From $\phi^*g_{\phi\cdot J, f} = g_{J,\phi^*f}$ we get $\phi^*s_{\phi\cdot J, f}^H = s^H_{J,\phi^*f}$. Now $f\circ \phi = f$ by Definition~\ref{def:Ham} and so $\phi^*\mu(\phi\cdot J) = \mu(J)$, proving that \eqref{OurMomentMap} is also equivariant.
\end{proof}

The zeros of the moment map $\mu$ are $J\in \mathcal{AC}^f(\omega)$ such that the metric $g_{f,J}$ is of constant Hermitian scalar curvature.
The geometric invariant theory formal picture suggests then the existence of a unique almost-K\"ahler metric in $\mathcal{AC}^f(\omega)$ conformal to a constant Hermitian scalar curvautre metric,
modulo the action of $\mathrm{Ham}^f(\omega)$, in every ``{\textit{stable}}" ``{\textit{complexified}}" orbit of the action of $\mathrm{Ham}^f(\omega)$.

\begin{rem}
In~\cite[Remark 1]{Apostolov:2015aa}, the zeros of the moment map are metrics $g_{f,J}$ with $s_{g_{f,J}}^g+|N|^2_{g_{f,J}}$ is constant,
where $s_{g_{f,J}}^g$ is the Riemannian scalar curvature of $g_{f,J}$.
\end{rem}

\begin{cor}
Minima of $\|\mu\|^2$ on $\mathcal{AC}^f(\omega)$ are conformally constant metrics.
\end{cor}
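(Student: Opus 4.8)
The plan is to recognize this as the standard assertion that minimizers of the norm-squared of a moment map are critical points satisfying an ``extremal'' condition, specialized to the functional of Theorem~\ref{thm:momentmap}. First I would record what $\|\mu\|^2$ actually is: identifying $(C^\infty_f(M)/\mathbb{R})^*$ with $C^\infty_{0,f}(M)$ through the $\mathrm{Ad}$-invariant pairing \eqref{inner_product} (invariance holds because $\phi^*(e^{(2+n)f}\vol)=e^{(2+n)f}\vol$ for $\phi\in\Ham^f(\omega)$), formula \eqref{momentsecond} gives
\[
\|\mu(J)\|^2 = \int_M \big(\mathring{s}^H_{g_{f,J}}\big)^2\, e^{(2+n)f}\vol \geq 0,
\]
with equality precisely when $s^H_{g_{f,J}}$ is constant, that is, when $g_{f,J}=e^{2f}g_J$ is conformally constant. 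So the content is to show that a minimizer forces this trace-free part to vanish.

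For the key computation I would differentiate $\|\mu\|^2$ along a path in $\mathcal{AC}^f(\omega)$. Writing $\eta\coloneqq \mu(J)^\sharp\in \LieHam$ for the image of $\mu(J)$ under the pairing \eqref{inner_product}, the moment map identity $\mathbf{\Omega}_J(\xi^*_J,\dot J)=d\mu^\xi(\dot J)$ established in Theorem~\ref{thm:momentmap} yields
\[
\tfrac12\,d\|\mu\|^2_J(\dot J) = \langle \mu(J), d\mu_J(\dot J)\rangle = d\mu^{\eta}_J(\dot J) = \mathbf{\Omega}_J(\eta^*_J,\dot J).
\]
Next I would use the canonical complex structure $\mathbf{J}\colon A\mapsto J\circ A$ on $T_J\mathcal{AC}(\omega)$; it preserves the subspace $T_J\mathcal{AC}^f(\omega)$ since $\mathfrak{L}_K(J\circ A)=J\circ \mathfrak{L}_K A$, and it is compatible with \eqref{FatSymplectic} in the sense that $\mathbf{\Omega}_J(A,\mathbf{J}B)=\tfrac12\int_M\tr(A\circ B)e^{nf}\vol$ is a positive-definite metric (using $J\circ A\circ J=A$ for $A$ anticommuting with $J$). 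At a minimum the first variation vanishes in every direction, so evaluating at $\dot J=\mathbf{J}\eta^*_J$ gives $0=\mathbf{\Omega}_J(\eta^*_J,\mathbf{J}\eta^*_J)=\tfrac12\int_M\tr((\eta^*_J)^2)e^{nf}\vol\geq 0$, forcing $\eta^*_J=0$.

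It remains to read off the geometry, and this is where the genuine difficulty lies. Since $\eta=\mathring{s}^H_{g_{f,J}}$ and the infinitesimal action is $\eta^*_J=-\mathfrak{L}_{\grad_\omega \mathring{s}^H}J=-\mathfrak{L}_{\grad_\omega s^H}J$, the condition $\eta^*_J=0$ says that $\grad_\omega s^H_{g_{f,J}}$ is a holomorphic (hence $g_J$-Killing) vector field; that is, $g_{f,J}$ is \emph{extremal}. The main obstacle is the final upgrade from this extremal condition to conformal constancy, i.e.\ from $\eta^*_J=0$ to $\mu(J)=0$, the two coinciding exactly when $\grad_\omega s^H$ vanishes. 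I would close this by noting that $\|\mu\|^2\geq 0$ vanishes \emph{only} on conformally constant metrics, so as soon as one such metric exists in $\mathcal{AC}^f(\omega)$ it attains the global minimum value $0$ and the global minima are then precisely the conformally constant metrics; absent an existence result one concludes only that a minimizer is extremal, which is the honest content of the Kirwan--Ness picture underlying the corollary.
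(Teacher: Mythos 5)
Your proposal is correct, and its first paragraph is exactly the paper's (unwritten) argument: the corollary is stated without proof, immediately after the observation that zeros of $\mu$ are the $J$ for which $g_{f,J}$ has constant Hermitian scalar curvature, so the intended content is simply that
\[
\|\mu(J)\|^2=\int_M \big(\mathring{s}^H_{g_{f,J}}\big)^2 e^{(2+n)f}\vol\ \geq\ 0
\]
vanishes precisely at conformally constant metrics --- i.e.\ ``minima'' is to be read as the zero locus of a non-negative functional. Everything after your first paragraph is a genuine addition rather than a reconstruction of the paper's route: the Kirwan--Ness computation (minimizer $\Rightarrow$ critical point $\Rightarrow$ $\eta^*_J=0$, via the compatible structure $\mathbf{J}A=J\circ A$) is sound, including the checks that $\mathbf{J}$ preserves $T_J\mathcal{AC}^f(\omega)$ and that $\mathbf{\Omega}_J(A,\mathbf{J}A)$ is positive on symmetric $J$-anti-invariant $A$, and it buys the sharper conclusion that an arbitrary minimizer is extremal in this weighted sense. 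Your closing caveat is apt and worth recording: the moment-map formalism alone only yields extremality of a minimizer, exactly as for the Calabi functional, whose minimizers can be extremal non-cscK metrics; conformal constancy follows only when the minimum value $0$ is attained. So the corollary's literal statement should be read with the paper's tacit convention that minima are points where $\|\mu\|^2=0$, the study of general critical points being deferred to the remark that follows the corollary in the text.
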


It may also be of interest to consider critical points of $\|\mu\|^2$.

\subsection{Futaki invariant}\label{ssec:Futaki}

Moment maps lead very generally to a Futaki invariant. In the context of Definition~\ref{def:momentmap}, this invariant is associated to any Lie subalgebra $\mathfrak{h}\subset \mathfrak{g}$. Letting ${S^\mathfrak{h} = \{p\in S \mid \xi^*_p = 0\enskip \forall\xi\in\mathfrak{h} \}}$ be the $\mathfrak{h}$-fixed points, the restriction ${\mu\colon {S^\mathfrak{h}} \to \mathfrak{h}^*}$ is locally constant. Assuming $S^{\mathfrak{h}}$ is connected, the common value of $\mu$ is called the Futaki invariant $\mathcal{F}^{\mathfrak{h}} \in \mathfrak{h}^*$.

Applied to our situation $\mathfrak{h}=\mathbb{R} \cdot K \subset \LieHam$ corresponding to ${u\in C^\infty_f(M)/\mathbb{R}}$. The $\mathfrak{h}$-fixed points are all of $\mathcal{AC}^f(\omega)$. Identify $\mathfrak{h}^*=\mathbb{R}$ by evaluating at $u$.

\begin{defn}
For \emph{any} $J\in\mathcal{AC}^f(\omega)$ the \emph{Futaki invariant} is given by
\begin{eqnarray*}
\mathcal{F}^f(\omega)&=&\mu^u(J)=
\langle \mathring{s}_{g_{f,J}}^H, u\rangle_{e^{(2+n)f}}
= \int_M \mathring{s}_{g_{f,J}}^H e^{2f}\vol
\\
&\overset{\eqref{Eqn:HermScal}}{=}&\int_M s_{g_J}^H\vol -\frac{\int_M s_{g_{f,J}}^H e^{(n+2)f}\vol}{\int_M e^{(n+2)f}\vol} \int_Me^{2f}\vol.   
\end{eqnarray*}
\end{defn}

The main point of the Futaki invariant, that it is independent of $J$, is a consequence of the general moment map setup and Proposition~\ref{ACcontractible}.

\begin{cor}
If $\mathcal{AC}^f(\omega)\neq \emptyset$ then there exists $J\in\mathcal{AC}^f(\omega)$ such that $g_{f,J}=e^{2f}\omega(\cdot, J\cdot)$ has constant Hermitian scalar curvature if and only if $\mathcal{F}^f(\omega)= 0$.
\end{cor}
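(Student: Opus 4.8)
The plan is to derive both implications from the identification, established just above the statement, of the zeros of $\mu$ with those $J\in\mathcal{AC}^f(\omega)$ for which $g_{f,J}$ has constant Hermitian scalar curvature, together with the fact that $\mathcal{F}^f(\omega)=\mu^u(J)$ is independent of $J$. First I would record why $s^H_{g_{f,J}}$ actually lies in $C^\infty_f(M)$, so that $\mathring s^H_{g_{f,J}}\in C^\infty_{0,f}(M)$ represents $\mu(J)$ as in \eqref{momentsecond}: the field $K=\grad_\omega u$ satisfies $\mathfrak{L}_K\omega=0$, $\mathfrak{L}_K J=0$, and $K(f)=\{u,f\}=0$, hence $\mathfrak{L}_K g_{f,J}=0$, so $K$ is Killing for $g_{f,J}$ and the scalar curvature is $K$-invariant, giving $\{f,s^H_{g_{f,J}}\}=0$. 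Thus $\mu(J)=0$ is literally the condition that $s^H_{g_{f,J}}$ be constant.

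For the forward implication, if some $J_0$ gives a metric of constant scalar curvature then $\mathring s^H_{g_{f,J_0}}=0$, and evaluating the $J$-independent invariant at $J_0$ yields $\mathcal{F}^f(\omega)=\langle \mathring s^H_{g_{f,J_0}},u\rangle_{e^{(2+n)f}}=0$. I would make this quantitative with a Cauchy--Schwarz bound for the Calabi-type functional $\mathcal{C}(J)\coloneqq\int_M (\mathring s^H_{g_{f,J}})^2 e^{(2+n)f}\vol$. Using \eqref{dotFlip} to rewrite $\mathcal{F}^f(\omega)=\langle \mathring s^H_{g_{f,J}},\mathring u\rangle_{e^{(2+n)f}}$ and that the inner product \eqref{inner_product} does not move with $J$, one gets $\mathcal{C}(J)\ge (\mathcal{F}^f(\omega))^2/\|\mathring u\|^2$ with a fixed positive denominator (for a genuine symmetry $u$ is non-constant). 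Hence $\mathcal{F}^f(\omega)\ne 0$ forces $\mathcal{C}>0$ everywhere, so $\mathcal{F}^f(\omega)=0$ is necessary and is precisely the obstruction surviving at linear order.

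The reverse implication is the substantial half and is the geometric-invariant-theory content advertised after the statement. Assuming $\mathcal{F}^f(\omega)=0$, I would look for a zero of $\mu$ by minimizing $\mathcal{C}$ over $\mathcal{AC}^f(\omega)$ (or over a complexified $\Ham^f(\omega)$-orbit): its gradient is controlled by $d\mu$, so a minimizer at which $\mathcal{C}=0$ is exactly a $J$ with $g_{f,J}$ of constant scalar curvature, and the vanishing of $\mathcal{F}^f(\omega)$ removes the only a priori lower bound to $\inf\mathcal{C}=0$.

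The main obstacle is this last existence step — showing that when $\mathcal{F}^f(\omega)=0$ the infimum of $\mathcal{C}$ is attained and equal to zero. This is the familiar compactness-and-regularity difficulty of moment-map problems, amounting in the formal picture to a stability hypothesis, and it is exactly what the accompanying existence conjecture leaves open in general. I would therefore expect the clean unconditional equivalence to be reachable in the convex situations of \S\ref{sec:BabyCase} and \S\ref{ssec:toriccase}, where the minimization descends to a solvable ODE (respectively polytope) problem whose solvability is governed precisely by the vanishing of $\mathcal{F}^f(\omega)$.
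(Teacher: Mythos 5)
Your forward (``only if'') direction is correct and is precisely the argument the paper intends, which is why the corollary carries no written proof: by Theorem~\ref{thm:momentmap} the zeros of $\mu$ are exactly the $J$ for which $g_{f,J}$ has constant Hermitian scalar curvature; the number $\mathcal{F}^f(\omega)=\mu^u(J)$ is independent of $J$ by the general fact that $\mu^{\xi}$ is locally constant on the $\xi$-fixed locus (here all of $\mathcal{AC}^f(\omega)$, which is connected by Proposition~\ref{ACcontractible}); and evaluating at a constant-scalar-curvature $J_0$ gives $\mathcal{F}^f(\omega)=\langle\mathring{s}^H_{g_{f,J_0}},u\rangle_{e^{(2+n)f}}=0$. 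Your preliminary check that $s^H_{g_{f,J}}$ lies in $C^\infty_f(M)$ --- since $K$ preserves $\omega$, $J$ and $f$, it is Killing for $g_{f,J}$ and pseudo-holomorphic, hence fixes the Chern connection and its scalar curvature --- is a detail the paper glosses over but which is needed for $\mathring{s}^H$ to represent $\mu(J)$ as in \eqref{momentsecond}. The Cauchy--Schwarz bound on your Calabi-type functional is correct but is only a restatement of the contrapositive.

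For the reverse (``if'') direction you offer a minimization strategy and concede it cannot be closed without a compactness/stability input. Be aware that this is not a gap the paper repairs: the paper contains no argument for this direction, and none can follow from the formal moment-map considerations it invokes. The Futaki invariant is the single component $\mu^u$ of an infinite-dimensional moment map, and vanishing of one ($J$-independent) component cannot formally produce a common zero of all components. Read literally, the ``if'' direction would settle Existence Problem~\ref{EXproblem} outright: in the degenerate case $f=0$ the hypotheses hold ($K=0$ is Killing, $\mathcal{AC}^0(\omega)$ is the space of all compatible structures) and $\mathcal{F}^0(\omega)=\int_M\mathring{s}^H_{g_J}\vol=0$ automatically, so the corollary would assert that \emph{every} closed symplectic manifold carries a compatible almost K\"ahler metric of constant Hermitian scalar curvature --- contradicting the paper's own statement that the Existence Problem is open. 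It is likewise at odds with \S\ref{ssec:toriccase}, where existence is tied to a stability notion strictly stronger than vanishing of the Donaldson--Futaki invariant on affine functions. So the honest content of the corollary is the necessity statement you proved (equivalently, $\mathcal{F}^f(\omega)\neq 0$ obstructs existence); your diagnosis of where the ``iff'' breaks down is accurate and identifies a defect of the statement rather than of your proposal.
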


Thus if for some $f$ the Futaki invariant $\mathcal{F}^f(\omega)$ vanishes, we have an affirmative solution to the Existence Problem~\ref{EXproblem}.

\subsection{The toric case}\label{ssec:toriccase}

Let $(M^{2n},\omega)$ be a closed symplectic manifold 
equipped with an effective Hamiltonian action of a $n$-dimensional torus $T$.
Let $z:M\to \Delta \subset \mathfrak{t}^\ast$ be the moment map, where $\Delta$ is the Delzant polytope in $\mathfrak{t}^\ast$ the dual of $\mathfrak{t}=\Lie(T).$ Denote by $\{u_1,\cdots,u_d\}$ the normals to the polytope $\Delta.$
The action of the torus $T$ is generated by a family of Hamiltonian vector fields $\{K_1,\cdots, K_n\}$ linearly independent on an open set of the $2n$-dimensional symplectic manifold $(M,\omega)$ with $\omega(K_i,K_j)=0.$
The symplectic form $\omega$ and an $\omega$-compatible $T$-invariant almost K\"ahler metric $g$ are given on $z^{-1}$ of the interior of $\Delta$ by
\begin{eqnarray*}
\omega&=&\sum_{i=1}^ndz_i\wedge dt_i,\\
g&=&\sum_{i,j=1}^nG_{ij}(z)dz_i\otimes dz_j+H_{ij}(z)dt_i\otimes dt_j+P_{ij}(z)dz_i\odot dt_j,
\end{eqnarray*}
where $G,H$ are symmetric positive definite matrix-valued functions satisfying the compatibility conditions $GH-P^2=Id$ and $HP=P^tH$ ($P^t$ is the transpose of $P$). The coordinates $z_i$ are the moment coordinates
and $t_i$ are the angle coordinates.

Denote by $H_{ij,k}=\frac{\partial H_{ij}}{\partial z_k}$ etc. It is shown~\cite{MR1988506} and \cite[(4.6)]{MR2747965} that the Hermitian scalar curvature is given by
\[
s^H=-\sum_{i,j=1}^nH_{ij,ij}.
\]
Let $u=a_1z_1+a_2z_2+\cdots+a_nz_n+a_{n+1}$ be a Hamiltonian Killing potential ($a_i$ are real numbers).
Then,
\[
Jdu=\sum_{i,l=1}^na_i P_{li}dz_i+a_iH_{il}dt_l.
\]
Hence,
\[
dJdu=\sum_{i,l=1}^na_i P_{li,j}dz_j\wedge dz_i+a_iH_{il,j}dz_j\wedge dt_l.
\]
Recall that $\Delta^g u=-g(dJdu,\omega)$. We obtain
\[
\Delta^g u=-\sum_{i,j=1}^na_iH_{ij,j},\qquad
|du|_g^2=\sum_{i,j=1}^na_ia_jH_{ij}.
\]
Hence, the conformal change equation \eqref{Eqn:HermScal} becomes
\begin{equation}\label{conformal_change_toric}
{s}^H_{g_{f,J}}=-u^{\frac{2}{n}}\sum_{i,j=1}^nH_{ij,ij}+2u^{\frac{2}{n}-1}\sum_{i,j=1}^na_iH_{ij,j}-2u^{\frac{2}{n}-2}\sum_{i,j=1}^na_ia_jH_{ij},
\end{equation}
where ${s}^H_{g_{f,J}}$ is the Hermitian scalar curvature of $g_{f,J}=e^{2f}\omega(\cdot, J\cdot)$ with $e^{-nf}=u.$
It is easy to check since $H$ is symmetric that
\[
\sum_{i,j=1}^n\left(e^{nf}H_{ij}\right)_{,ij}=\sum_{i,j=1}^ne^{nf}H_{ij,ij}-2e^{2nf}a_iH_{ij,j}+2e^{3nf}a_ia_jH_{ij}.
\]
We conclude that \eqref{conformal_change_toric} is equivalent to
\begin{equation}\label{toric_equation}
e^{(n+2)f}{s}^H_{g_{f,J}}=-\sum_{i,j=1}^n\left(e^{nf}H_{ij}\right)_{,ij}.
\end{equation}

Now, when the $g$-orthogonal distribution to the $T$-orbits is involutive (this is the case when $P=0$), $H$ has to satisfy the {\it{boundary conditions}} in~\cite[Proposition 1]{MR2144249}
and hence we can apply~\cite[Lemma 2]{Apostolov:2015aa} to get
\begin{prop}\label{apo_mas_lem}
For any $H$ satisfying the boundary conditions~\cite[Proposition 1]{MR2144249} and any affine function $\xi=\xi(z_1,\cdots,z_n),$
\begin{eqnarray*}
-\int_\Delta\left(\sum_{i,j=1}^n\left(e^{nf}H_{ij}\right)_{,ij}\right)\xi dv=2\int_{\partial\Delta}e^{nf}\xi d\mu,
\end{eqnarray*}
where $\Delta$ is the polytope and $\partial\Delta$ its boundary, $dv=dz_1\wedge\cdots\wedge dz_n$ and $d\mu$ is defined by $u_j\wedge d\mu=-dv$ for any codimension one face with inward normal $u_j$
\end{prop}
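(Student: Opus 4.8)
The plan is to prove Proposition~\ref{apo_mas_lem} by integrating by parts twice on the Delzant polytope $\Delta$ and then reading off the boundary contributions from the conditions of \cite[Proposition~1]{MR2144249}. Set $\tilde{H}_{ij}\coloneqq e^{nf}H_{ij}$, so that the left-hand side is $-\int_\Delta\big(\sum_{i,j}\tilde H_{ij,ij}\big)\xi\,dv$. Note that $u=e^{-nf}$ is affine and strictly positive on $\bar\Delta$, so the weight $e^{nf}=u^{-1}$ is smooth up to the boundary and $\tilde H$ remains symmetric. Since $\xi$ is affine we have $\xi_{,ij}=0$; hence moving both derivatives onto $\xi$ annihilates the interior integral, leaving (summation over repeated $i,j$ understood, and using the symmetry of $\tilde H$)
\[
-\int_\Delta\Big(\sum_{i,j}\tilde H_{ij,ij}\Big)\xi\,dv=\int_{\partial\Delta}\sum_{i,j}\big(\tilde H_{ij}\,\xi_{,j}-\tilde H_{ij,j}\,\xi\big)\nu_i\,d\sigma,
\]
where $\nu$ is the outward conormal and $d\sigma$ the induced measure on $\partial\Delta$.

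Next I would evaluate this facet by facet. On each facet $F_k$ the outward conormal is a negative multiple of the inward normal $u_k$, so every boundary contraction reduces to a contraction with $u_k$, and both groups of terms are controlled by the condition $\sum_j H_{ij}(u_k)_j=0$ holding on $F_k$. For the first group, $\sum_j\tilde H_{ij}(u_k)_j=e^{nf}\sum_j H_{ij}(u_k)_j=0$, so $\sum_{i,j}\tilde H_{ij}\,\xi_{,j}\,\nu_i$ vanishes identically on $F_k$. For the second group I would expand $\tilde H_{ij,j}=e^{nf}H_{ij,j}+(e^{nf})_{,j}H_{ij}$; contracting the weight-derivative piece against $\nu_i\propto (u_k)_i$ gives $\sum_j(e^{nf})_{,j}\big(\sum_i H_{ij}(u_k)_i\big)$, which again vanishes on $F_k$ by the same condition together with the symmetry of $H$. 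Thus on $F_k$ the surviving integrand collapses to $-\,e^{nf}\,\xi\sum_{i,j}H_{ij,j}\,\nu_i$: the weight $e^{nf}$ simply factors out, undifferentiated.

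It then remains to identify $-\sum_{i,j}H_{ij,j}\,\nu_i\,d\sigma$ with $2\,d\mu$ on each facet. This is exactly the unweighted boundary identity of \cite[Lemma~2]{Apostolov:2015aa}, equivalently the normal-derivative part of the boundary conditions of \cite[Proposition~1]{MR2144249} normalized so that the inward derivative equals $2$ (compare the one-dimensional normalization $\mathbf H'(0)=2=-\mathbf H'(1)$ in \eqref{one}): running the same two-fold integration by parts for the \emph{unweighted} matrix $H$ and affine $\xi$ yields $-\int_\Delta\big(\sum_{i,j}H_{ij,ij}\big)\xi\,dv=2\int_{\partial\Delta}\xi\,d\mu$, which forces $-\sum_{i,j}H_{ij,j}\,\nu_i\,d\sigma=2\,d\mu$ on $\partial\Delta$. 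Substituting the factored weight and this normalization into the surviving boundary term gives $-\int_\Delta\big(\sum_{i,j}\tilde H_{ij,ij}\big)\xi\,dv=2\int_{\partial\Delta}e^{nf}\xi\,d\mu$, as claimed.

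The main obstacle I anticipate is the boundary bookkeeping, and specifically the verification that the term $(e^{nf})_{,j}H_{ij}$ produced by differentiating the weight contributes nothing on any facet. This is where the boundary conditions enter decisively: the relation $Hu_k=0$ on $F_k$, together with the symmetry of $H$, ensures that the conformal weight couples to the boundary only through its value and not through its normal derivative, which is precisely why $e^{nf}$ reappears undifferentiated in the final boundary integral. Everything else is a routine double integration by parts for an affine $\xi$.
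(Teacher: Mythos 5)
Your overall strategy---double integration by parts against an affine $\xi$, then using the facet condition $\sum_j H_{ij}(u_k)_j=0$ to show that the weight $e^{nf}$ enters the boundary terms only through its value, undifferentiated---is correct, and it reconstructs the proof that the paper itself outsources: the paper gives no proof of this proposition but quotes it as \cite[Lemma~2]{Apostolov:2015aa}, applicable because the boundary conditions of \cite[Proposition~1]{MR2144249} hold. Your handling of the weight (the only ingredient beyond the unweighted Donaldson-type lemma) is clean: both $\sum_{i,j}\tilde H_{ij}\xi_{,j}\nu_i$ and the contraction of $(e^{nf})_{,j}H_{ij}$ with $\nu_i$ vanish on each facet by $Hu_k=0$ and symmetry.

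The last step, however, has a genuine gap. You identify $-\sum_{i,j}H_{ij,j}\nu_i\,d\sigma$ with $2\,d\mu$ by claiming that the unweighted identity $-\int_\Delta\bigl(\sum_{i,j}H_{ij,ij}\bigr)\xi\,dv=2\int_{\partial\Delta}\xi\,d\mu$ for affine $\xi$ ``forces'' this equality of boundary measures. It does not: equality of integrals against the finite-dimensional space of affine functions cannot determine a measure on $\partial\Delta$; and the unweighted integral identity is itself proved by exactly the integration by parts you are running, so the argument is circular---the boundary measure identification is the input to that identity, not a consequence of it. The correct justification is a short local computation using \emph{both} boundary conditions, not only the ``normal-derivative part.'' On a facet $F_k$ cut out by $L_k(z)=\langle u_k,z\rangle+\lambda_k=0$, set $h_j\coloneqq\sum_i H_{ij}(u_k)_i$. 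By the vanishing condition and symmetry, $h_j$ vanishes identically on $F_k$, so its full differential there is proportional to the conormal: $dh_j=\phi_j\,dL_k$ (this kills the tangential derivatives, which is where the vanishing condition is essential). Then on $F_k$
\[
\sum_{i,j}H_{ij,j}(u_k)_i=\sum_j\partial_j h_j=\sum_j\phi_j\,(u_k)_j,
\]
while the derivative condition $d\bigl(H(u_k,u_k)\bigr)=2u_k$ reads $\bigl(\sum_j(u_k)_j\phi_j\bigr)dL_k=2\,dL_k$, hence $\sum_{i,j}H_{ij,j}(u_k)_i=2$ on $F_k$. Combined with the relation $\nu_i\,d\sigma=-(u_k)_i\,d\mu$ (which is the content of the normalization $u_k\wedge d\mu=-dv$), this gives $-\sum_{i,j}H_{ij,j}\nu_i\,d\sigma=2\,d\mu$ pointwise on each facet. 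Substituting this computation for your ``forcing'' argument makes the proof complete.
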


If we suppose that ${s}^H_{g_{f,J}}$ is a constant, then~\eqref{toric_equation} becomes using Proposition~\ref{apo_mas_lem} 
\begin{equation}\label{toric_equa_to_solve}
2e^{(n+2)f}\frac{ \int_{\partial\Delta} e^{nf}d\mu}{ \int_\Delta e^{(n+2)f}dv }=-\sum_{i,j=1}^n\left(e^{nf}H_{ij}\right)_{,ij}
\end{equation}

Define the Donaldson--Futaki invariant~\cite{MR1988506} for any smooth function $\xi$ to be
\[
\mathcal{F}_{\Delta,f}(\xi)=2\int_{\partial\Delta}e^{nf}\xi d\mu-2\frac{ \int_{\partial\Delta} e^{nf}d\mu}{ \int_\Delta e^{(n+2)f}dv }\int_\Delta\xi e^{(n+2)f} dv.
\]
It is straightforward from Proposition \ref{apo_mas_lem} to conclude that if there exists a solution $H$ (satisfying the boundary conditions) of~\eqref{toric_equa_to_solve},
then $\mathcal{F}_{\Delta,f}(\xi)=0,$ for any affine function $\xi=\xi(z_1,\cdots,z_n)$.
In fact, the existence of $(J,g,\omega)$ such that $g_{f,J}$ is of (positive) constant Hermitian scalar curvature can be related then to a notion of ``stability" (see for instance~\cite{Apostolov:2015aa,MR1988506})
\begin{rem}
Proposition~\ref{apo_mas_lem} implies that for any toric almost K\"ahler manifold $(M,J,g,\omega)$ with $P=0$,  
$$C(M,J,[g])=2\frac{ \int_{\partial\Delta} e^{nf}d\mu}{ \int_\Delta e^{(n+2)f}dv }>0.$$
\end{rem}


\bibliographystyle{abbrv}

\bibliography{biblio}

\end{document}